\definecolor{Chocolat}{rgb}{0.36, 0.2, 0.09}
\definecolor{BleuTresFonce}{rgb}{0.215, 0.215, 0.36}
\let\oldtocsection=\tocsection
\let\oldtocsubsection=\tocsubsection
\renewcommand{\tocsection}[2]{\hspace{0em}\vspace{0.1em}\rule{0pt}{14pt}\oldtocsection{#1}{#2}\bf}
\renewcommand{\tocsubsection}[2]{\hspace{2em}\oldtocsubsection{#1}{#2}}
\tikzset{
  optree/.style={scale=.5,thick,grow'=up,level distance=10mm,inner sep=1pt},
  comp/.style={draw=none,circle,fill,line width=0,inner sep=0pt},
  dot/.style={draw,circle,fill,inner sep=0pt,minimum width=3pt},
  circ/.style={draw,circle,inner sep=1pt,minimum width=4mm},
  emptycirc/.style={draw,circle,inner sep=1pt,minimum width=2mm},
  root/.style={level distance=10mm,inner sep=1pt},
  leaf/.style={draw=none,circle,fill,line width=0,inner sep=0pt},
  nodot/.style={draw,circle,inner sep=1pt},
}
\NewDocumentCommand{\createbunch}{ m O{} m }
 {
  \clist_map_inline:nn { #3 } { \cs_new_protected:cpn { #2 ##1 } { #1 { ##1 } } }
 }
\createbunch{\mathbb}   [bb]{A,B,C,D,E,F,G,H,I,J,K,L,M,N,O,P,Q,R,S,T,U,V,W,X,Y,Z}
\createbunch{\mathcal}  [cal]{A,B,C,D,E,F,G,H,I,J,K,L,M,N,O,P,Q,R,S,T,U,V,W,X,Y,Z}
\createbunch{\mathscr}  [scr]{A,B,C,D,E,F,G,H,I,J,K,L,M,N,O,P,Q,R,S,T,U,V,W,X,Y,Z}
\createbunch{\mathsf}   [sf]{A,B,C,D,E,F,G,H,I,J,K,L,M,N,O,P,Q,R,S,T,U,V,W,X,Y,Z}
\createbunch{\mathrm}   [rm]{A,B,C,D,E,F,G,H,I,J,K,L,M,N,O,P,Q,R,S,T,U,V,W,X,Y,Z}
\createbunch{\mathfrak} [frak]{A,B,C,D,E,F,G,H,I,J,K,L,M,N,O,P,Q,R,S,T,U,V,W,X,Y,Z}
\createbunch{\mathfrak} [frak]{a,b,c,d,e,f,g,h,i,j,k,l,m,n,o,p,q,r,s,t,u,v,w,x,y,z}
\theoremstyle{plain}
\newtheorem{thm}{Theorem}[section]
\newtheorem{proposition}[thm]{Proposition}
\newtheorem{theorem}[thm]{Theorem}
\newtheorem{corollary}[thm]{Corollary}
\newtheorem{lemma}[thm]{Lemma}
\newtheorem*{theointro}{Theorem}
\theoremstyle{definition}
\newtheorem{definition}[thm]{Definition}
\newtheorem{remark}[thm]{\sc Remark}
\newtheorem{exam}[thm]{\sc Example}
\newtheorem*{convention}{\sc Convention}
\def\gra{\mathrm{g}}
\def\mono{\hookrightarrow}
\def\epi{\twoheadrightarrow}
\def\cc{\circledcirc}
\def\scLi{\mathsf{scL}_\infty}
\def\point{\vcenter{\hbox{\scalebox{0.5}{$\bullet$}}}}
\def\k{\mathbb{k}}
\def\dgVect{\mathsf{dgVect}}
\def\M{\mathcal{M}}
\def\N{\mathcal{N}}
\newcommand{\Hom}{\ensuremath{\mathrm{Hom}}}
\renewcommand{\hom}{\ensuremath{\mathrm{hom}}}
\newcommand{\Cyl}{\ensuremath{\mathrm{Cyl}}}
 \newcommand{\eend}{\ensuremath{\mathrm{end}}}
\def\S{\mathrm{S}}
\def\I{\mathcal{I}}
\def\oC{\overline{\calC}}
\def\oD{\overline{\calD}}
\def\eps{\varepsilon}
\def\sq{\, \square\,}
\def\ibt{\underset{\scriptscriptstyle (1,1)}{\boxtimes}}
\def\libt{\lhd_{(*)}}
\def\ribt{\tensor*[_{(*)}]{\rhd}{}}
\def\DD{\ensuremath{\mathrm{D}}}
\def\UU{\ensuremath{\mathrm{U}}}
\def\h{\ensuremath{\mathfrak{h}}}
\def\H{\mathrm{H}}
\newcommand{\ac}{{\scriptstyle \text{\rm !`}}}
\def\L{\mathrm{L}}
\def\P{\mathcal{P}}
\def\id{\mathrm{id}}
\def\g{\mathfrak{g}}
\def\d{\mathrm{d}}
\def\Cop#1#2{\tensor*[_{#1}]{\Delta}{_{#2}}}
\newcommand{\ZZ}{\mathbb{Z}}
\newcommand{\NN}{\mathbb{N}}
\newcommand{\Map}{\mathrm{Map}}
\newcommand{\Sy}{{\mathbb{S}}}
\newcommand{\G}{\mathcal{G}}
\renewcommand{\Bar}{\mathrm{B}}
\newcommand{\Cobar}{\mathrm{\Omega}}
\newcommand{\Sbimod}{\Sy\mbox{-}\mathsf{bimod}}
\newcommand{\Set}{\mathsf{Sets}}
\renewcommand{\Sbimod}{\Sy\mbox{-}\mathsf{bimod}}
\renewcommand{\Set}{\mathsf{Set}}
\newcommand{\sSet}{\mathsf{sSet}}
\newcommand{\End}{\mathrm{End}}
\newcommand{\Linfty}{\mathrm{L}_\infty}
\newcommand{\cLi}{\mathsf{cL}_\infty}
\newcommand{\uCom}{\mathrm{uCom}}
\newcommand{\Com}{\mathrm{Com}}
\newcommand{\souche}{\mathcal{C}}
\renewcommand{\phi}{\varphi}
\newcommand{\too}{\longrightarrow}
\newcommand{\catofgebras}[1]{{#1}\text{-}\mathsf{gebras}}
\newcommand{\catofalgebras}[1]{{#1}\text{-}\,\mathsf{alg}}
\newcommand{\MC}{\mathrm{MC}}
\title{Simplicial properadic homotopy}
\date{\today}
\author{Eric Hoffbeck}
\address{LAGA, CNRS, UMR 7539, Universit\'e Sorbonne Paris Nord, Universit\'e Paris 8, 99 Avenue Jean-Baptiste Cl\'ement, 93430 Villetaneuse, France.}
\email{hoffbeck@math.univ-paris13.fr}
\email{vallette@math.univ-paris13.fr}
\author{Johan Leray}
\address{Universit\'e de Nantes, 
Laboratoire de Math\'ematiques Jean LERAY (LMJL), CNRS, 
UMR 6629, 
2 Chemin de la Houssini\`ere, BP 92208,  44322 Nantes Cedex 3, France}
\email{johan.leray@univ-nantes.fr}
\author{Bruno Vallette}
\subjclass[2010]{Primary 18M85; 
Secondary 14D15, 16T10, 17B55, 18M70, 18N40, 18N50}
\keywords{Homotopical algebra, bialgebras, properads, $\infty$-morphisms, simplicial category}
\thanks{The authors were supported by the ANR HighAGT (ANR-20-CE40-0016) grant.}
\begin{document}

\maketitle

\begin{abstract}
In this paper, we settle the homotopy properties of the $\infty$-morphisms of homotopy (bial)\-ge\-bras over properads, i.e. algebraic structures made up of operations with several inputs and outputs. We start by providing the literature with characterizations for the various types of $\infty$-morphisms, the most seminal one being the equivalence between $\infty$-quasi-isomorphisms and zig-zags of  quasi-isomorphisms which plays a key role in the study the formality property. 
We establish a simplicial enrichment for the categories of gebras over some cofibrant properads together with their $\infty$-morphisms, whose homotopy category provides us with the localisation with respect to $\infty$-quasi-isomorphisms. 
These results extend to the properadic level known properties for operads, but the lack of the rectification procedure in this setting forces us to use different methods. 
\end{abstract}

\setcounter{tocdepth}{2}
\tableofcontents

\section*{Introduction}

\paragraph{\bf Differential graded bialgebras} 
In the present article, we pursue the study of the homotopical properties of types of differential graded bialgebras initiated in \cite{HLV19}. 
Let us recall that the homotopy theory of types of differential graded algebras is now well understood, thanks to the development of the suitable operadic calculus prompted by the Koszul duality, see \cite{LodayVallette12} for instance. An operad is a fundamental object which encodes operations with several inputs but one output. One can encode categories of algebraic structures made up of operations with several inputs and several outputs by the notion of a \emph{properad}, which also admits a Koszul duality theory \cite{Vallette07}. Since properads can govern types of algebras, coalgebras, and bialgebras, we simply call \emph{gebras} the categories of algebraic structures encoded by properads after J.-P. Serre \cite{Serre93}. The deformation theory of gebras was developed in \cite{MerkulovVallette09I, MerkulovVallette09II} and the notion of an $\infty$-morphism of gebras was introduced recently in \cite{HLV19}. Their obstruction theory, the  ubiquitous homotopy transfer theorem with effective formulas, and  the quasi-invertibility of $\infty$-quasi-isomorphisms were all settled on the properadic level in \emph{loc. cit.}.\\

\paragraph{\bf Formality property} 
When two differential graded gebras are related by a quasi-isomorphism, i.e. a morphism inducing an isomorphism on homology, 
they share common homotopy properties. But given a quasi-isomorphism of differential graded gebras, there does not necessarily exist a quasi-isomorphism in the other direction which preserves the respective algebraic structures. In order to get a well-defined equivalence relation, one is obliged to consider zig-zag of quasi-isomorphisms. The most seminal example of such a situation is the \emph{formality property} which amounts to requiring that a differential graded gebra is related to its induced homology gebra by a zig-zag of quasi-isomorphisms. In this particular case, the homotopy properties of the original differential graded gebra are direct consequences of its more simple homology gebra. 
The two most seminal examples of formality properties of differential graded algebras are given by the commutative algebra of differential forms on compact K\"ahler manifolds due to Deligne--Griffiths--Morgan--Sullivan \cite{DGMS75} and by the
differential graded Lie algebra structure on the Hochschild cochain complex of the algebra of smooth functions on manifolds
due to M. Kontsevich \cite{Kontsevich03}. \\

\paragraph{\bf Zig-zags of quasi-isomorphisms vs $\infty$-quasi-isomorphisms} 
In the former case, a zig-zag made up of only two maps and one intermediate differential graded commutative algebra is directly constructed using the powerful $d d^c$-lemma. This is a very favorable situation and the general case is difficult to establish in practice since it amounts to constructing several intermediate differential graded algebras. Considering assignments is more simple in general and this is the strategy followed in the latter case. It relies on the fondamental result saying that the existence of a zig-zag of quasi-isomorphisms of differential graded algebras is equivalent to the existence of a \emph{direct} $\infty$-quasi-isomorphism, see \cite[Theorem~11.4.9]{LodayVallette12}. 
The first main result of the present paper is a generalisation of this property to differential graded gebras over properads of the form $\mathcal{P}_\infty=\Omega \calC$, that is given by the cobar construction of differential graded conilpotent coproperads.

\begin{theointro}[\bf \ref{thm:MainInftyQi}]
Two $\mathcal{P}_\infty$-gebra structures $(A,\alpha)$ and $(B, \beta)$ are $\infty$-quasi-isomorphic if and only if they are related by a zig-zag of quasi-isomorphisms of $\mathcal{P}_\infty$-gebras: 
\[
\begin{array}{c}
\exists\  \text{$\infty$-quasi-isomorphism} \\
\begin{tikzcd}[column sep=normal]
	(A,\alpha) \ar[r,squiggly,"\sim"] 
	& (B, \beta)
\end{tikzcd} 
\end{array}
\Longleftrightarrow
\begin{array}{c}
 \exists\  \text{zig-zag of quasi-isomorphisms} \\
\begin{tikzcd}[column sep=small]
	(A, \alpha) \ar[r,"\sim"] 
	& \point
	& \point \ar[l,"\sim"'] \ar[r,dotted, no head]
	&\point
	& \point \ar[l,"\sim"'] \ar[r,"\sim"] 
	& (B, \beta)
	\end{tikzcd}  \ .
\end{array}
\]
\end{theointro}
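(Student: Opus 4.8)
The plan is to reduce both implications to the bar--cobar adjunction attached to the universal twisting morphism $\iota\colon \calC \to \mathcal{P}_\infty = \Omega\calC$, mirroring the operadic argument of \cite[Theorem~11.4.9]{LodayVallette12} but replacing the rectification step by a direct construction.

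For the implication $(\Leftarrow)$, I would first record that a strict quasi-isomorphism of $\mathcal{P}_\infty$-gebras is in particular an $\infty$-quasi-isomorphism, its linear component being the map itself with vanishing higher components. Then I would invoke the two structural results already established at the properadic level in \cite{HLV19}: $\infty$-morphisms compose, and every $\infty$-quasi-isomorphism admits an $\infty$-quasi-isomorphism in the opposite direction (quasi-invertibility). Traversing the given zig-zag, keeping the forward arrows and replacing each backward quasi-isomorphism by an $\infty$-quasi-inverse, and then composing all the resulting $\infty$-quasi-isomorphisms, yields a single direct $\infty$-quasi-isomorphism from $(A,\alpha)$ to $(B,\beta)$.

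For the substantial implication $(\Rightarrow)$, I would argue as follows. By definition, an $\infty$-morphism $f$ from $(A,\alpha)$ to $(B,\beta)$ is a morphism of dg conilpotent $\calC$-coalgebras $\mathrm{B}_\iota f\colon \mathrm{B}_\iota A \to \mathrm{B}_\iota B$ between the associated bar constructions, and $f$ is an $\infty$-quasi-isomorphism precisely when its linear component $A \to B$ is a quasi-isomorphism. First I would show, using the filtration of the bar construction by the weight (number of vertices) of $\calC$, that this hypothesis forces $\mathrm{B}_\iota f$ to be a quasi-isomorphism of the underlying dg $\calC$-coalgebras: on the associated graded the differential reduces to the internal one, so $\mathrm{B}_\iota f$ becomes the cofree extension of the linear part, which is a quasi-isomorphism, and convergence of the spectral sequence transfers this to $\mathrm{B}_\iota f$ itself. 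Next I would apply the cobar functor $\Omega_\iota$ to produce a genuine strict morphism $\Omega_\iota\mathrm{B}_\iota f\colon \Omega_\iota\mathrm{B}_\iota A \to \Omega_\iota\mathrm{B}_\iota B$ of $\mathcal{P}_\infty$-gebras, and use that $\Omega_\iota$ preserves quasi-isomorphisms between conilpotent dg $\calC$-coalgebras to conclude that it too is a quasi-isomorphism. Finally, since $\iota$ is a Koszul twisting morphism, the counits $\varepsilon_A\colon \Omega_\iota\mathrm{B}_\iota A \to A$ and $\varepsilon_B\colon\Omega_\iota\mathrm{B}_\iota B \to B$ of the bar--cobar adjunction are strict quasi-isomorphisms (\cite{Vallette07}), producing the short zig-zag
\[
(A,\alpha) \xleftarrow{\ \sim\ } \Omega_\iota\mathrm{B}_\iota A \xrightarrow{\ \sim\ } \Omega_\iota\mathrm{B}_\iota B \xrightarrow{\ \sim\ } (B,\beta),
\]
all of whose arrows are strict quasi-isomorphisms of $\mathcal{P}_\infty$-gebras; composing the last two even shrinks it to a single intermediate object.

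The main obstacle I anticipate is entirely homological-algebraic and properad-specific: establishing the three filtration/spectral-sequence statements above, namely that an $\infty$-quasi-isomorphism induces a quasi-isomorphism of bar constructions, that $\Omega_\iota$ preserves quasi-isomorphisms of conilpotent coalgebras, and that the bar--cobar counit is a resolution. Over a properad the cofree decomposition runs over connected directed graphs graded by both genus and number of vertices, so the relevant filtrations are more delicate than in the operadic case, and one must verify that the associated spectral sequences are exhaustive and convergent, which is exactly where the conilpotence of $\calC$ enters. This is precisely the point at which the operadic strategy, which would otherwise rectify $A$ and $B$ to strict $\mathcal{P}$-gebras and run the argument there, becomes unavailable and must be replaced by the direct bar--cobar construction.
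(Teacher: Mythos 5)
Your backward implication is correct and is exactly the paper's argument: strict quasi-isomorphisms are $\infty$-quasi-isomorphisms, and one inverts the wrong-way arrows using the homological invertibility of $\infty$-quasi-isomorphisms from \cite[Theorem~4.18]{HLV19} and composes.

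The forward implication, however, rests on a construction that does not exist at the properadic level, and this is precisely the obstruction the paper is built to circumvent. Your argument requires the bar--cobar adjunction \emph{at the level of gebras}: a bar construction $\mathrm{B}_\iota A$ realising an $\Omega\calC$-gebra as a quasi-cofree conilpotent $\calC$-coalgebra, a relative cobar functor $\Omega_\iota$ landing back in $\Omega\calC$-gebras, and a counit $\Omega_\iota\mathrm{B}_\iota A \to A$ which is a strict quasi-isomorphism. For properads there is no free $\mathcal{P}$-gebra functor (composing operations with several outputs along connected graphs does not yield an object of the underlying category), hence no cofree $\calC$-cogebra, no $\mathrm{B}_\iota$/$\Omega_\iota$ adjunction on gebras, and no rectification $\Omega_\iota\mathrm{B}_\iota$. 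The properadic Rosetta stone of \cite[Theorem~3.10]{HLV19} replaces cofree coalgebras by bifree monoidal $\S\calC$-comodules $\S\calC \sq \S A$, which do encode $\infty$-morphisms but do not admit a cobar functor back to $\Omega\calC$-gebras. So the step ``apply $\Omega_\iota$ to $\mathrm{B}_\iota f$ to get a strict quasi-isomorphism'' has no meaning here; this is exactly why the authors state that the operadic proof of \cite[Theorem~11.4.9]{LodayVallette12} cannot be reproduced.

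The paper's actual route is quite different. Given an $\infty$-quasi-isomorphism $f\colon(A,\alpha)\rightsquigarrow(B,\beta)$, it transfers a structure $\theta$ onto the homology $H$ via the explicit properadic homotopy transfer theorem of \cite[Theorem~4.14]{HLV19}, and separately invokes Fresse's endomorphism-of-diagrams version of transfer (\cite[Proposition~7.4]{Fresse10ter}, extended to properads) which produces, for any quasi-isomorphism of underlying complexes, a transferred structure together with a \emph{zig-zag of strict quasi-isomorphisms}. Comparing the two transferred structures on $H$ (for $i_{(0)}$ and for $j_{(0)}=(f\circledcirc i)_{(0)}$) yields two structures $\theta'$ and $\theta''$ on $H$ that are $\infty$-isotopic, and \cref{prop:InftyIsot} --- proved via the cylinder properad $\Cyl(\Omega\calC)$ and the path object of chain complexes --- converts this $\infty$-isotopy into a two-arrow zig-zag of strict quasi-isomorphisms, closing the diagram. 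If you want to salvage your write-up you should replace the entire bar--cobar paragraph by this transfer-and-compare argument; the spectral-sequence difficulties you flag are real, but they arise for the cofibrancy of $(\Omega\calC)_{\bullet\rightsquigarrow\bullet}$ and the cylinder, not for a bar construction of gebras, which is simply unavailable.
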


This statement is the key result which opens the door to formality properties for homotopy gebras. 
The new range of possible applications is rather wide since it now allows one to deal with 
associative bialgebras (using the bar construction for $\calC$), (involutive) Lie bialgebras \cite{CFL20}, (involutive) Frobenius bialgebras \cite{Yalin18}, 
Double Poisson bialgebras \cite{Leray19protoII, LV23}, Pre-Calabi--Yau (bi)algebras \cite{KTV23, KTV25}, (balanced) infinitesimal bialgebras \cite{Aguiar00, Q23}, Airy structures \cite{KontsevichSoibelman18, BV25}, 
etc. 
It has already been used recently by C. Emprin \cite{Emprin24} in order to settle faithful obstruction classes for the  formality property and by 
Emprin--Takeda \cite{EmprinTakeda25} to establish the intrinsic (co)formality property of spheres over the rational numbers with respect to a properadic structure close to pre-Calabi--Yau algebras. 
We also establish here properties and characterisations for the other types of $\infty$-morphisms: when their first component is injective, surjective, equal to the identity or generic. \\

\paragraph{\bf Deformation theory of $\infty$-morphisms} To establish further homotopical properties for properadic $\infty$-morphisms, 
we apply the up-to-date methods of deformation theory. First, we solve the question of finding a (complete) homotopy Lie algebra which encodes 
$\infty$-morphisms of $\calP_\infty$-gebras under the Maurer--Cartan equation: it is given by a convolution type complete homotopy Lie algebra, see \cref{prop:MCInfty}. Recall that curved $\mathrm{L}_\infty$-morphisms, are $\infty$-morphisms of  homotopy Lie algebras which are not necessarily pointed. This extended framework allows us to enrich the category of $\calP_\infty$-gebras with homotopy Lie algebras, see \cref{thm:EnrichCat}. \\

\paragraph{\bf Simplicial category of homotopy gebras}
It remains to apply the Deligne--Hinich integration functor $\MC_\bullet$ which assigns Kan complexes to (complete) homotopy Lie algebras to get a simplicial category of $\calP_\infty$-gebras whose $0$-simplices are the $\infty$-morphisms, see \cref{prop:SimplicialCat0Simpl}. 
We show that this simplicial category is well suited to the homotopy properties of $\infty$-morphisms. First, its homotopy category provides us with both localisations of the category of $\calP_\infty$-gebras with respect to quasi-isomorphisms and to $\infty$-quasi-isomorphisms.

\begin{theointro}[\bf \ref{thm:HoCatViaSimpl}]
The canonical functor 
\[\infty\text{-}\catofgebras{\calP_\infty} \to \pi_0\left(\Delta\textsf{-}\catofgebras{\calP_\infty}\right)\]
is the universal functor which sends quasi-isomorphisms (respectively $\infty$-quasi-isomorphisms) to isomorphisms: 
\[\infty\text{-}\catofgebras{\calP_\infty}\left[\mathsf{qi}^{-1}\right] \cong   
\infty\text{-}\catofgebras{\calP_\infty}\left[\infty\text{-}\mathsf{qi}^{-1}\right] \cong   
\pi_0\left(\Delta\textsf{-}\catofgebras{\calP_\infty}\right)~.\]
\end{theointro}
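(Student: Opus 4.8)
The plan is to prove that the identity-on-objects functor
\[
\Phi\colon \infty\text{-}\catofgebras{\calP_\infty}\to\pi_0\!\left(\Delta\textsf{-}\catofgebras{\calP_\infty}\right)
\]
is \emph{the} localisation by verifying its universal property by hand, and then to deduce that the two localisations coincide. First I would record two structural features of $\Phi$. It is the identity on objects, and it is full: a morphism of $\pi_0\!\left(\Delta\textsf{-}\catofgebras{\calP_\infty}\right)$ from $(A,\alpha)$ to $(B,\beta)$ is by definition a connected component of the Kan complex $\Map\big((A,\alpha),(B,\beta)\big)=\MC_\bullet(\g_{A,B})$, and every such component is represented by a $0$-simplex, i.e. by an honest $\infty$-morphism (\cref{prop:SimplicialCat0Simpl}); by construction the simplicial composition restricts on $0$-simplices to the usual composition of $\infty$-morphisms, so that $\pi_0$-composition of two classes is the class of the composite $\infty$-morphism. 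Granting this, the universal property reduces to two statements, say (A) and (B): (A) $\Phi$ sends quasi-isomorphisms, and more generally $\infty$-quasi-isomorphisms, to isomorphisms; (B) for any functor $G$ out of $\infty\text{-}\catofgebras{\calP_\infty}$ inverting quasi-isomorphisms, homotopic $\infty$-morphisms have the same image under $G$. Indeed, given such a $G$ the assignment $\bar G([h]):=G(h)$ is then forced, well defined by (B), and functorial by the composition bookkeeping above, while uniqueness follows since $\Phi$ is full and bijective on objects.

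For (A), I would argue through the Deligne--Hinich integration functor and the Yoneda lemma. Composition with a fixed $\infty$-morphism $f\colon(A,\alpha)\to(B,\beta)$ is controlled, at the level of convolution homotopy Lie algebras, by an $\Linfty$-morphism $\g_{X,A}\to\g_{X,B}$ whose linear part is governed by the first component of $f$; when $f$ is an ($\infty$-)quasi-isomorphism this is a filtered quasi-isomorphism of complete homotopy Lie algebras. By the Dolgushev--Rogers/Getzler theorem, $\MC_\bullet$ sends it to a homotopy equivalence of Kan complexes, so that $\pi_0(f_*)\colon\pi_0\Map(X,A)\to\pi_0\Map(X,B)$ is a bijection for every $X$; by the Yoneda lemma in $\pi_0\!\left(\Delta\textsf{-}\catofgebras{\calP_\infty}\right)$, the class $[f]$ is then an isomorphism. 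Running this both for strict and for $\infty$-quasi-isomorphisms establishes both clauses of (A), and already produces the comparison functors $\infty\text{-}\catofgebras{\calP_\infty}[\mathsf{qi}^{-1}]\to\pi_0$ and $\infty\text{-}\catofgebras{\calP_\infty}[\infty\text{-}\mathsf{qi}^{-1}]\to\pi_0$.

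The crux is (B), and I expect it to be the main obstacle. A homotopy between $\infty$-morphisms $f$ and $g$ is a $1$-simplex of $\Map(A,B)$, that is a Maurer--Cartan element $\Xi$ of $\g_{A,B}\,\widehat{\otimes}\,\Omega_\bullet(\Delta^1)$ restricting to $f$ and $g$ under the two endpoint evaluations $\Omega_\bullet(\Delta^1)\to\k$. The naive strategy would read $\Xi$ as an $\infty$-morphism into a path object $B\otimes\Omega_\bullet(\Delta^1)$ whose two projections to $B$ are quasi-isomorphisms, whence $G(f)=G(g)$ formally. This is exactly where the properadic setting departs from the operadic one: because properadic operations carry several outputs, tensoring a $\calP_\infty$-gebra with the commutative dg algebra of polynomial forms does \emph{not} yield a $\calP_\infty$-gebra, and there is no rectification to repair this. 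I would therefore dispense with path objects and convert the gauge homotopy $\Xi$ directly into quasi-isomorphisms relating $f$ and $g$: integrating $\Xi$ exhibits $g$ as a gauge translate of $f$, and I would combine this with \cref{thm:MainInftyQi} to produce an explicit zig-zag of quasi-isomorphisms witnessing $G(f)=G(g)$, analysing the endpoints of $\Xi$ through the filtered quasi-isomorphism $\g_{A,B}\hookrightarrow\g_{A,B}\,\widehat{\otimes}\,\Omega_\bullet(\Delta^1)$ already exploited in (A).

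Finally I would assemble the pieces. By (A) and (B), $\Phi$ inverts quasi-isomorphisms and is initial among functors doing so, hence realises $\pi_0\!\left(\Delta\textsf{-}\catofgebras{\calP_\infty}\right)$ as $\infty\text{-}\catofgebras{\calP_\infty}[\mathsf{qi}^{-1}]$. The identical $\bar G$-construction applies verbatim to functors inverting $\infty$-quasi-isomorphisms: any such functor inverts quasi-isomorphisms a fortiori, so (B) is available unchanged, while (A) guarantees $\Phi$ itself inverts $\infty$-quasi-isomorphisms; this gives $\pi_0\cong\infty\text{-}\catofgebras{\calP_\infty}[\infty\text{-}\mathsf{qi}^{-1}]$. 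Composing the two identifications over $\infty\text{-}\catofgebras{\calP_\infty}$ produces the isomorphism between the two localisations and completes the statement.
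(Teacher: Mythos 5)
Your overall architecture (full, identity-on-objects functor; step (A) inverting $\infty$-quasi-isomorphisms via integration of the filtered $\infty$-quasi-isomorphisms $f^*$, $f_*$ and Dolgushev--Rogers; step (B) showing that any functor inverting quasi-isomorphisms coequalizes homotopic $\infty$-morphisms) matches the paper's, and your step (A) is essentially \cref{lem:pullbackpushoutinfty} and \cref{thm:HoInv}. But step (B), which you rightly identify as the crux, is left unproved, and the repair you sketch does not work. Saying that a $1$-simplex $\Xi$ of $\MC_\bullet(\h_{\alpha,\beta})$ exhibits $g$ as a ``gauge translate'' of $f$, and then invoking \cref{thm:MainInftyQi}, is a category error: that theorem produces a zig-zag of quasi-isomorphisms between \emph{objects} that are $\infty$-quasi-isomorphic, whereas what you need is a diagram intertwining the two \emph{parallel morphisms} $f$ and $g$ through quasi-isomorphisms in such a way that a functor $\rmF$ merely inverting quasi-isomorphisms is forced to satisfy $\rmF(f)=\rmF(g)$. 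Even if you produce a commutative square $g_B\, k = f\, g_A$ and $h_B\, k = g\, h_A$ with $g_A,h_A,g_B,h_B$ quasi-isomorphisms, you only get $\rmF(f)=\rmF(g_B)\rmF(k)\rmF(g_A)^{-1}$ and $\rmF(g)=\rmF(h_B)\rmF(k)\rmF(h_A)^{-1}$; these agree only if $\rmF(g_A)=\rmF(h_A)$ and $\rmF(g_B)=\rmF(h_B)$, which does not follow from their being invertible.

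The paper's solution (\cref{prop:HoEquiv}) supplies exactly the missing structure: it observes that $\End_{A,B}\otimes\Omega_1$ is a path object for $\End_{A,B}$ in the model category of $2$-colored dg properads (\cref{prop:ModelCatColoreddgProperads}), so a simplicial homotopy between $f$ and $g$ is a right homotopy between the properad morphisms $(\alpha,f,\beta)$ and $(\alpha,g,\beta)$ out of the cofibrant $2$-colored properad $\left(\Cobar\calC\right)_{\bullet\rightsquigarrow\bullet}$ of \cref{prop:Res}; a $2$-colored properadic version of Fresse's endomorphism-of-diagrams argument then converts this into a commutative ladder through $\left(\mathrm{Path}(A),\alpha'\right)$ and $\left(\mathrm{Path}(B),\beta'\right)$ in which the horizontal quasi-isomorphisms $g_A,h_A$ (resp.\ $g_B,h_B$) admit a \emph{common section} $i_A$ (resp.\ $i_B$). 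It is precisely the identities $\rmF(g_A)\rmF(i_A)=\rmF(h_A)\rmF(i_A)=\id$ that force $\rmF(g_A)=\rmF(h_A)$ and hence $\rmF(f)=\rmF(g)$. Your diagnosis that path objects of gebras are unavailable properadically is correct, but the cure is to move the path-object argument from the category of gebras to the category of $2$-colored dg properads, not to bypass it via \cref{thm:MainInftyQi}. Without an argument of this kind your step (B), and with it the universal property, remains unestablished.
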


Then, this simplicial category structure allows us to characterise $\infty$-quasi-isomorphisms in terms of weak equivalences of Kan complexes. 

\begin{theointro}[\bf \ref{thm:CaraInftyQI}]
An  $\infty$-morphism $f$ of $\P_\infty$-gebras is an $\infty$-quasi-isomorphism if and only if 
the pullback map $\MC_\bullet\left(f^*\right)$ and/or the pushout map $\MC_\bullet\left(f_*\right)$ are weak equivalences of 
Kan complexes. 
\end{theointro}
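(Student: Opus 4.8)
The plan is to reduce the statement to the Goldman--Millson-type theorem of Dolgushev--Rogers, which asserts that the integration functor $\MC_\bullet$ sends any filtered quasi-isomorphism of complete homotopy Lie algebras to a weak equivalence (in fact a homotopy equivalence) of Kan complexes. By the enrichment of \cref{thm:EnrichCat}, for every $\P_\infty$-gebra $(C,\gamma)$ the pushout map $f_*$ is a morphism of the convolution complete homotopy Lie algebras encoding the $\infty$-morphisms $(C,\gamma)\to(A,\alpha)$ and $(C,\gamma)\to(B,\beta)$ respectively, and dually the pullback map $f^*$ acts on $\infty$-morphisms out of $(A,\alpha)$ and $(B,\beta)$; applying $\MC_\bullet$ recovers the simplicial maps of \cref{prop:SimplicialCat0Simpl}. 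So it suffices to control these (co)variable morphisms of homotopy Lie algebras through the canonical weight filtration coming from $\calC$.

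First, the direct implication. Recall that $f$ is an $\infty$-quasi-isomorphism precisely when its first component $f_1\colon A\to B$ is a quasi-isomorphism. The convolution homotopy Lie algebra governing $\infty$-morphisms into $(A,\alpha)$ has underlying complex of the form $\Hom_\Sy(\oC,\,\End^C_A)$, filtered by the weight of $\oC$; the twisting differentials and all the higher components of $f$ strictly raise this filtration, so on the associated graded the map $f_*$ reduces to postcomposition with $f_1$, that is, to the map $\Hom(C^{\otimes \bullet},A^{\otimes \bullet})\to\Hom(C^{\otimes\bullet},B^{\otimes\bullet})$ induced by $f_1$. Since we work over a field, tensor powers and internal homs preserve quasi-isomorphisms, so $\gr(f_*)$ is a quasi-isomorphism; hence $f_*$ is a filtered quasi-isomorphism and $\MC_\bullet(f_*)$ is a weak equivalence. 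The same argument, now using precomposition with $f_1$ on the contravariant variable, shows that $f^*$ is a filtered quasi-isomorphism and that $\MC_\bullet(f^*)$ is a weak equivalence.

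For the converse, assume $\MC_\bullet(f_*)$ is a weak equivalence for every $\P_\infty$-gebra $(C,\gamma)$; it then induces a bijection on $\pi_0$, i.e. on homotopy classes of $\infty$-morphisms. Taking $(C,\gamma)=(B,\beta)$, surjectivity produces an $\infty$-morphism $g\colon(B,\beta)\to(A,\alpha)$ with $f\cdot g$ homotopic to $\id_B$; taking $(C,\gamma)=(A,\alpha)$, the equalities $f_*(g\cdot f)=(f\cdot g)\cdot f\sim f=f_*(\id_A)$ together with injectivity on $\pi_0$ force $g\cdot f$ to be homotopic to $\id_A$. Thus $g$ is a two-sided homotopy inverse of $f$. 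Passing to first components, which is compatible with the homotopy relation, yields $g_1 f_1\sim\id_A$ and $f_1 g_1\sim\id_B$ up to chain homotopy, so $f_1$ is a quasi-isomorphism and $f$ is an $\infty$-quasi-isomorphism. The argument for $\MC_\bullet(f^*)$ is strictly dual, which establishes the announced ``and/or''.

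The main obstacle is the associated-graded computation in the direct implication: one must set up the weight filtration on the convolution homotopy Lie algebras so that it is complete and exhaustive (so that the Dolgushev--Rogers theorem applies) and verify that both the twisted/curved differentials and the higher components of $f$ strictly increase it, leaving only the linear term $f_1$ on $\gr$. A secondary subtlety is that the enrichment may present $f_*$ and $f^*$ only as $\infty$-morphisms of homotopy Lie algebras rather than strict ones, in which case one needs the filtered $\infty$-version of the Goldman--Millson theorem; this still holds but must be invoked with care.
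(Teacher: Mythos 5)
Your proposal is correct and, for the direct implication, coincides with the paper's argument: \cref{lem:pullbackpushoutinfty} proves exactly that $f^*$ and $f_*$ are filtered $\infty$-quasi-isomorphisms (via an auxiliary coradical-type filtration on which the twisting terms strictly increase degree, so that only pre/post-composition by $f_{(0)}$ survives on $E^0$), and then the Dolgushev--Rogers homotopy invariance theorem is invoked in its filtered $\infty$-morphism form, just as you anticipate in your closing caveat. For the converse, your $\pi_0$-surjectivity/injectivity argument producing a two-sided homotopy inverse $g$ is the same as the paper's, but you then diverge in a genuinely lighter direction: where the paper invokes \cref{prop:HoEquiv} (homotopic $\infty$-morphisms fit into a zig-zag of strict quasi-isomorphisms, proved via the model structure on $2$-colored dg properads and Fresse's endomorphism properads of diagrams), you only need that homotopic $\infty$-morphisms have chain-homotopic first components. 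This weaker statement suffices for the theorem and admits a short direct proof that you should make explicit: restriction to $\I\subset\calC$ defines a strict continuous morphism of shifted $\L_\infty$-algebras $\frakh_{\alpha,\beta}\to\big(\Hom(A,B),-\partial\big)$ onto an abelian target, because the infinitesimal decomposition maps $\Cop{(*)}{}$ and $\Cop{}{(*)}$ vanish on $\I$ and $d_\calC(\id)=0$, so all $\mathcal{l}_n$ for $n\geqslant 2$ die under evaluation at $\id$; for an abelian algebra $\pi_0\MC_\bullet$ is degree-zero homology, so homotopic Maurer--Cartan elements restrict to chain-homotopic maps, and $(g\circledcirc f)_{(0)}=g_{(0)}\circ f_{(0)}$ since $\Delta(\id)=\id\boxtimes\id$. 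With that lemma supplied, your route avoids the heaviest machinery of the paper's proof (which the authors need anyway for the localisation theorem, but not intrinsically here); without it, the assertion ``passing to first components is compatible with the homotopy relation'' is the one unjustified step in your write-up.
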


Finally, the homotopy invariance property of the Deligne--Hinich integration functor \cite{DolgushevRogers15, RNV19}, which a far reaching generalisation of the celebrated Goldman--Millson theorem \cite{GoldmanMillson88}, gives a straighforward proof of a version of the homotopy transfer theorem for homotopy gebras (\cref{Thm:HTTnew}), which is very similar to the one obtained by B. Fresse \cite{Fresse10ter} with completely different methods. 
These two close versions of the homotopy transfer theorem are however rather different then to the one given in \cite{HLV19}, which is not  produced by abstract existence but by explicit formulas.\\

\paragraph{\bf Discourse on the method}
All the results given in this article are generalisations of operadic statements to the properadic level, but most of them are far from being automatic: proofs were expected since the introduction of the Koszul duality for properads \cite{Vallette07} in 2007 and since the simplicial enrichment of homotopy algebras \cite{DolgushevHoffnungRogers14} in 2015. In the operadic case, the equivalence between zig-zags of quasi-isomorphisms and $\infty$-quasi-isomorphisms was settled using the bar-cobar adjunction and the induced rectification procedure both for homotopy algebras and for $\infty$-morphisms \cite[Section~11.4.3]{LodayVallette12}. Such a method cannot hold for homotopy gebras: due to the lack of a free gebra functor, there exists no suitable bar and cobar constructions neither a similar rectification functor. In order to bypass this conceptual obstruction, we use arguments from a model category structure on $2$-colored differential graded properads (\cref{prop:ModelCatColoreddgProperads}) and from the theory of endomorphisms of diagrams of properads (\cite{Fresse10ter}). The overall strategy to develop the simplicial enrichment of the category of homotopy gebras is borrowed from Dolgushev--Hoffnung--Rogers \cite{DolgushevHoffnungRogers14} but its properadic extension cannot be established using the same proofs. On a technical side, adequate filtrations on trees are used in \emph{loc. cit.} in order to make the various spectral sequences to converge. In the present work, we had to introduce new types of filtrations on graphs, like the \emph{density filtration} (\cref{def:DensOp}) to conclude. 
Such more powerful filtrations are expected to be used fruitfully in future studies on properadic gebras. Then, the homotopy properties 
of the simplicial enrichment of homotopy algebras are proved in \cite{DolgushevHoffnungRogers14} using 
the fact that path objects of endomorphisms operads coincide with endomorphism operads with values in path objets. This key property is true since operads have only one output, but it cannot hold anymore for properads. 
 To overcome this last issue, we use refined model categorical arguments of endomorphisms of $2$-colored properads, see the proof of \cref{thm:HoCatViaSimpl}.  
Finally, it is worth noticing that the properadic calculus performed here is more conceptual than the operadic calculus used in \emph{loc. cit.} with the beneficial consequence for the present exposition to be more concise. \\
 
\paragraph{\bf Layout} In the first section, we start with recollections on the properadic calculus including the 
definition of $\infty$-morphisms for $\P_\infty$-gebras. Then, we settle a canonical cofibrant replacement of the 
$2$-colored properad encoding the data of two $\P_\infty$-gebras related by an $\infty$-morphism and we introduce a canonical very good cylinder for the dg properad $\P_\infty=\Omega \calC$. Both are then used to establish the various characterisations of 
$\infty$-isotopies, $\infty$-quasi-isomorphisms, $\infty$-monomorphisms, $\infty$-epimorphisms and $\infty$-morphisms. 
The second section begins by recalling the various notions of (complete/shifted) $\mathrm{L}_\infty$-algebras and their (continuous/curved) $\infty$-morphisms. The convolution type homotopy Lie algebra encoding pairs of $\P_\infty$-gebras related by an $\infty$-morphism is first described and then used to enrich the category of $\P_\infty$-gebras and $\infty$-morphisms. The third section opens with the definition of the Deligne--Hinich integration functor. It is then used to provide the category of $\P_\infty$-gebras and $\infty$-morphisms with a simplicially enriched structure. Its associated homotopy category is shown to give the 
category of $\P_\infty$-gebras localised at $\infty$-quasi-isomorphisms. We conclude with the characterisation of $\infty$-quasi-isomorphisms in terms of weak equivalences between Kan complexes of mapping spaces.
\\

\paragraph{\bf Conventions} 
We work over a ground field  $\k$ of characteristic $0$ and in the  underlying category of differential $\ZZ$-graded vector spaces, denoted by $\dgVect$. We use the homological degree convention, for which differentials have degree $-1$. We denote the symmetric groups by $\bbS_n$. 

\medskip

We use the same conventions on the properadic homotopical calculus as in \cite{HLV19}, 
the same conventions on the integration of complete shifted  homotopy Lie algebras as in \cite{RNV19}, 
and the same conventions on $\infty$-morphisms of complete shifted curved homotopy Lie algebras as in \cite{DSV18}. \\

\paragraph*{\bf  Acknowledgements} 
We would like to express our appreciation 
to David Chataur, Coline Emprin, Benoit Fresse, Joan Mill\`es, Victor Roca i Lucio, and Thomas Willwacher 
for enlightening discussions and V.A.V. for the help with the pictures. 
The third author would like to thank the Institut des Hautes \'Etudes Scientifiques for the long term invitations and for the ideal working conditions there. 

\section{Homotopical properties of $\infty$-morphisms}

This section begins with recollections on properads and their categories of (bial)gebras from \cite{HLV19} including  the seminal notion of an $\infty$-morphism. In order to establish their homotopical properties, we encode them with a cofibrant 2-colored dg properad. The ultimate goal is to establish characterizations for the various types of $\infty$-morphisms ($\infty$-isotopies, $\infty$-quasi-isomorphisms, $\infty$-morphisms) in terms of zig-zag of strict morphisms. 

\subsection{Properadic calculus}\label{subsec:PropCal}
Here are a few recollections on the properadic homotopical calculus from our previous paper \cite{HLV19} in order to make the present text as self-contained as possible. 

\medskip
The category of $\Sy$-bimodules is endowed with a monoidal product: the \emph{connected composition product} $\boxtimes$ which amounts to composing operations along underlying 2-level connected  graphs, see \cite[Definition~1.13]{HLV19}. Its unit $\I$ is the $\Sy$-bimodule made up of a one-dimension space in arity $(1,1)$ spanned by $\id$. We will assume here that $\Sy$-bimodules are either \emph{left reduced},
 i.e. $\M(m,n)=0$ for $m=0$~, or \emph{right reduced}, i.e. $\M(m,n)=0$ for $n=0$~. 

\begin{definition}[Properads and coproperads]
A monoid in the monoidal category $(\Sbimod, \boxtimes, \I)$ is called a \emph{properad} and a comonoid is called a \emph{coproperad}.
\end{definition}

There exists a bar-cobar adjunction between 
augmented properads and conilpotent coproperads, see \cite[Section~2]{HLV19}. In the present paper, we will consider gebras over properads given by the cobar construction $\Cobar \calC$ of some coaugmented coproperad $\calC$. Such structures are equivalently encoded by Maurer--Cartan elements in the following convolution algebra, see \cite[Definition~2.20]{HLV19} for more details. 

\begin{definition}[Convolution algebra]
The \emph{convolution algebra} associated to a coaugmented dg coproperad $\calC$ and a dg vector space $A$ is defined by 
\begin{align*}
\g_{\calC, A}\coloneqq \Big(\Hom_{\Sy}\big(\oC , \End_A\big)\cong \prod_{n,m\geqslant 0}
\Hom_{\Sy_m\times \Sy_n^{\mathrm{op}}}\big(\oC(m,n) , \Hom(A^{\otimes n}, A^{\otimes m})\big), \star, \partial\Big)
\ ,
\end{align*}
with the Lie-admissible product given by 
\[f\star g \ : \ \oC \xrightarrow{\Delta_{(1,1)}} \oC \ibt  \oC \xrightarrow{f\ibt  g} \End_A \ibt  \End_A \xrightarrow{\gamma_{(1,1)}} \End_A~, \] 
where the first map $\Delta_{(1,1)}$ stands for the \emph{infinitesimal decomposition map} of the coaugmented  coproperad $\mathcal{C}$ which amounts to splitting its non-trivial elements into two. 
\end{definition}

Recall from \cite[Definition~3.12]{HLV19} that the \emph{left} and \emph{right infinitesimal composition products}
\[\M  \lhd_{(*)}  \N \qquad \text{and} \qquad \M 	\tensor*[_{(*)}]{\rhd}{}  \N\ , \] 
are the sub-$\Sy$-bimodules of $(\I \oplus \M)\boxtimes \N$ and $\M\boxtimes (\I \oplus \N)$ made up of the linear parts in $\M$ and $\N$ respectively. For $n\geqslant 1$, we consider their following summands 
\[
	\M \lhd_{(n)} \N \ 
	\qquad \text{and} \qquad\M 
	\tensor*[_{(n)}]{\rhd}{}
	\N \ ,  
\] 
 made up of one element of $\M$ on the bottom (resp. of $\N$ at the top) and $n$ elements of $\N$ on the top (resp. of $\M$ a the bottom). Notice that the top (resp. bottom) level is saturated by elements of $\N$ (resp. $\M$) and that the bottom (resp. top) level contains one element of $\M$ (resp. $\N$) and possibly many copies of the identity element form $\I$. \begin{figure*}[h]
	\begin{tikzpicture}[scale=0.75]
	\draw[thick] (1.95,1) to[out=270,in=90] (3.5,-1) ;
	\draw[thick] (4,-1)-- (4,-2);
	\draw[thick] (3,-1)-- (3,-2);
	\draw[thick] (6,-1)-- (6,-2);
	\draw[thick] (1,1) to[out=270,in=90] (2,-2);
	\draw[thick] (0.5,2) to  (0.5,1);
	\draw[thick] (0,1) to  (0,-2);
	\draw[thick] (7.5,2) to[out=270,in=90] (5.95,1) to[out=270,in=90] (8,-1) to[out=270,in=90] (8,-2); 
	\draw[thick] (5,2) to  (5,1) to[out=270,in=90] (4.5,-1);
	\draw[draw=white,double=black,double distance=2*\pgflinewidth,thick] (9,2) to  (9,1) to (9,-2);
	\draw[thick] (1.5,2) to  (1.5,1);
	\draw[draw=white,double=black,double distance=2*\pgflinewidth,thick] (8,1) to[out=270,in=90] (5.5,-1); 
	\draw[thick] (5,-1) 	 -- (5,-2);	
	\draw[draw=white,double=black,double distance=2*\pgflinewidth,thick] (4,2) to[out=270,in=90] (4,1);
	\draw[draw=white,double=black,double distance=2*\pgflinewidth,thick]  (4,1) to[out=270,in=90] (1,-2);
	\draw[draw=white,double=black,double distance=2*\pgflinewidth,thick] (6.5,2) to[out=270,in=90] (8,1) ;
	\draw[fill=white] (-0.3,0.8) rectangle (2.3,1.2);
	\draw[fill=white] (3.7,0.8) rectangle (6.3,1.2); 
	\draw[fill=white] (7.7,0.8) rectangle (9.3,1.2); 
	\draw[fill=white] (2.7,-1.2) rectangle (6.3,-0.8);
	\draw (1,1) node {\small{$\nu_1$}};
	\draw (5,1) node {\small{$\nu_2$}};
	\draw (8.5,1) node {\small{$\nu_3$}};
	\draw (4.5,-1) node {\small{$\mu$}};
	\end{tikzpicture}
	\caption{An element of $\M \lhd_{(3)} \N $.}
\end{figure*}

Recall from \cite[Definition~3.13]{HLV19} that the \emph{left} and \emph{right infinitesimal decomposition maps} of 
a coaugmented coproperad $(\calC, \Delta, \varepsilon)$ 
are defined respectively by 
\[\begin{tikzcd}[column sep=large, row sep=tiny]
\Cop{}{(*)} \ : \ \oC \arrow[r,"\Delta"] & 
 \calC \boxtimes \calC \arrow[r,"(\eps; \id)\boxtimes \id"] & \oC \libt  \calC\ , \\
\Cop{(*)}{} \ : \ \oC \arrow[r,"\Delta"] & 
 \calC \boxtimes \calC \arrow[r,"\id \boxtimes (\eps; \id)"] & \calC \ribt  \oC\ ,
\end{tikzcd}
\]
which can be extended to $\calC$ by setting the image of $\I$ to be trivial.
Similarly, they are made up of  the following components, for $n\geqslant 1$:
\[
	\begin{tikzcd}[column sep=normal, row sep=tiny]
		\Cop{}{(n)} \ : \ \calC \arrow[r,"\Delta"] & 
		\calC \boxtimes \calC \arrow[r, two heads] &\oC \lhd_{(n)} \calC\ , \\
		\Cop{(n)}{} \ : \ \calC \arrow[r,"\Delta"] & 
		\calC \boxtimes \calC\arrow[r, two heads] &\calC \tensor*[_{(n)}]{\rhd}{} \oC\ .
	\end{tikzcd}
\]

Given $f\in \Hom_{\Sy}\big(\calC, \End^A_B\big)$,  $\alpha \in \Hom_{\Sy}(\calC, \End_A)$, and $\beta\in \Hom_{\Sy}(\calC, \End_B)$, the \emph{left action}  of $\beta$ on $f$ and the \emph{right action} of $\alpha$ on $f$ are defined respectively by 
	\[
	\begin{tikzcd}[column sep=normal, row sep=tiny]
	\beta \lhd f  \ar[r,phantom,":" description] &
	\oC \arrow[r,"\Cop{}{(*)}"] &  \oC \libt \calC 
	\arrow[r,"\beta\libt f"] & \End_B \libt \End^A_B \arrow[r] & \End^A_B\ ,
\\
& \I \arrow[r,"\cong"] &  \I \boxtimes \I 
	\arrow[r,"\beta\boxtimes f"] & \End_B \boxtimes \End^A_B \arrow[r] & \End^A_B\ ,
\\
	f \rhd \alpha  \ar[r,phantom,":" description] &
	\oC \arrow[r,"\Cop{(*)}{}"] &  \calC \ribt \oC 
	\arrow[r,"f\ribt \alpha"] & \End^A_B \ribt \End_A \arrow[r] & \End^A_B \ ,\\
	& \I \arrow[r,"\cong"] &  \I \boxtimes \I 
	\arrow[r,"f\boxtimes \alpha"] & \End^A_B \boxtimes \End_A \arrow[r] & \End^A_B\ ,
	\end{tikzcd}
	\]
where the rightmost arrows are given by the usual composition of functions.	

\begin{definition}[$\infty$-morphism]\label{def:infmor}
An \emph{$\infty$-morphism} $f \colon (A,\alpha) \rightsquigarrow (B, \beta)$ is a 
degree $0$  map 
$f : \calC \to  \End^A_B$ 
of $\Sy$-bimodules satisfying the equation 
\begin{equation}\label{eq:Morph}
\partial (f)=
f  \rhd \alpha - \beta \lhd f\ .
\end{equation}
The \emph{composite} of $\infty$-morphisms is defined by 
\[\begin{tikzcd}
g\circledcirc f \ : \ \calC \arrow[r,"\Delta"] & 
\calC \boxtimes \calC \arrow[r,"g\boxtimes f"]  & 
\End^B_C \boxtimes \End_B^A \arrow[r]  & 
\End_C^A \  .
\end{tikzcd}\]
\end{definition}

\begin{proposition}[{\cite[Section~3]{HLV19}}]
The $\Cobar\calC$-gebras equipped with their $\infty$-morphisms and the composite $\circledcirc$ form a category, denoted by 
$\infty\text{-}\catofgebras{\Cobar\souche}$. 
\end{proposition}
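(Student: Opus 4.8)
The plan is to reduce all three category axioms to structural properties of the coproperad decomposition $\Delta$ and of the composition $\mu$ of the endomorphism properad, where I regard $\circledcirc$ as the restriction to $\infty$-morphisms of a bilinear operation defined on \emph{all} degree-preserving $\Sy$-bimodule maps by the very same formula $g\circledcirc f = \mu\circ(g\boxtimes f)\circ\Delta$. First I would record a Leibniz rule for the convolution differential with respect to this operation. Since both $\Delta$ and $\mu$ are morphisms of dg $\Sy$-bimodules, a standard computation unwinding $\partial$ on $\mu\circ(g\boxtimes f)\circ\Delta$ yields $\partial(g\circledcirc f) = (\partial g)\circledcirc f + (-1)^{|g|}\, g\circledcirc(\partial f)$; as $\infty$-morphisms have degree $0$, the sign disappears.

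The heart of the argument is then three compatibility identities relating the full decomposition with the infinitesimal decompositions defining the left and right actions:
\[
(\gamma\lhd g)\circledcirc f = \gamma\lhd (g\circledcirc f), \qquad g\circledcirc(f\rhd\alpha) = (g\circledcirc f)\rhd\alpha, \qquad (g\rhd\beta)\circledcirc f = g\circledcirc(\beta\lhd f)\ .
\]
Each of these is an instance of the coassociativity of $\Delta$, which relates $\Delta$ to $\Cop{}{(*)}$, $\Cop{(*)}{}$ and $\Delta_{(1,1)}$, together with the associativity of $\mu$: both sides split an element of $\calC$ into three connected pieces and feed the outer ones into $g$ and $f$ (or into $\mu$, $\alpha$, $\beta$, $\gamma$) in the same way.

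Granting them, I substitute the defining equations $\partial f = f\rhd\alpha - \beta\lhd f$ and $\partial g = g\rhd\beta - \gamma\lhd g$ into the Leibniz expansion of $\partial(g\circledcirc f)$. The two middle terms $(g\rhd\beta)\circledcirc f$ and $-\,g\circledcirc(\beta\lhd f)$ cancel by the third identity, while the outer terms recombine by the first two into $(g\circledcirc f)\rhd\alpha - \gamma\lhd(g\circledcirc f)$. Hence $g\circledcirc f$ satisfies \eqref{eq:Morph} and is an $\infty$-morphism $(A,\alpha)\rightsquigarrow(C,\gamma)$, so $\circledcirc$ is well defined. Associativity is then immediate from the same two principles: both $(h\circledcirc g)\circledcirc f$ and $h\circledcirc(g\circledcirc f)$ equal the triple composite $\mu^{(2)}\circ(h\boxtimes g\boxtimes f)\circ\Delta^{(2)}$.

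For the unit I would take $\id_{(A,\alpha)}\colon \calC \xrightarrow{\eps} \I \hookrightarrow \End_A$, sending $\id$ to $\id_A$ and $\oC$ to $0$. It is a $\partial$-cycle, and counitality of $\Delta$ forces $\id_{(A,\alpha)}\rhd\alpha = \alpha = \alpha\lhd \id_{(A,\alpha)}$, so \eqref{eq:Morph} holds and it is an $\infty$-morphism; the same counit relations $(\eps\boxtimes\id)\Delta = \id = (\id\boxtimes\eps)\Delta$ give $\id_{(B,\beta)}\circledcirc f = f = f\circledcirc \id_{(A,\alpha)}$. The main obstacle is the proof of the three structural identities: in the properadic setting $\boxtimes$ runs over two-level connected graphs and the infinitesimal maps single out only the linear parts, so matching them against the full coassociativity demands careful bookkeeping of levels and of the saturation conventions recalled above — precisely where the operadic simplification of a single output is unavailable.
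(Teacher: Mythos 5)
Your argument is correct, but it follows a genuinely different route from the one the paper relies on. The paper itself gives no proof here: it imports the statement from \cite[Section~3]{HLV19}, where the category structure is obtained through the properadic Rosetta stone, i.e.\ the identification of $\infty$-morphisms $f \colon (A,\alpha)\rightsquigarrow (B,\beta)$ with morphisms of bifree monoidal $\S\calC$-comodules commuting with the bidifferentials encoding $\alpha$ and $\beta$; under that identification $\circledcirc$ becomes the ordinary composite of such morphisms, so closure under composition, associativity and unitality are automatic, at the price of first establishing the equivalence of definitions. You instead verify the axioms directly in the convolution picture: the Leibniz rule for $\partial$ with respect to $g\circledcirc f=\mu\circ(g\boxtimes f)\circ\Delta$, the three interchange identities $(\gamma\lhd g)\circledcirc f=\gamma\lhd(g\circledcirc f)$, $g\circledcirc(f\rhd\alpha)=(g\circledcirc f)\rhd\alpha$, $(g\rhd\beta)\circledcirc f=g\circledcirc(\beta\lhd f)$, and counitality for the unit $\eps$ followed by $\I\hookrightarrow\End_A$. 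This is sound: each identity is an instance of coassociativity of $\Delta$ (relating the full decomposition to $\Cop{}{(*)}$ and $\Cop{(*)}{}$) combined with associativity of the endomorphism properad, exactly as in the operadic case, and the middle cancellation $(g\rhd\beta)\circledcirc f - g\circledcirc(\beta\lhd f)=0$ is the key point that makes the composite satisfy \eqref{eq:Morph}. Your closing caveat is the right one to flag: in the properadic setting the three-level connected-graph bookkeeping behind these identities is where the actual work lies, and a complete write-up would have to spell it out; but this is a routine (if tedious) verification rather than a gap. The direct approach buys self-containedness and explicit formulas; the comodule approach buys conceptual transparency and is the one the authors lean on elsewhere in this paper (e.g.\ in the proof of \cref{prop:PropertiesInfMor}, where elements of $\S\calC \sq \S A$ replace the operadic $\mathcal{P}^{\ac}(A)$).
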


The \emph{first component} of an $\infty$-morphism $f$ is defined by 
\[\begin{tikzcd}
f_{(0)} \ : \ \I  \arrow[r,hook] &  \I \oplus \oC \cong \calC  \arrow[r,"f"] & \End^A_B 
\end{tikzcd}\ .\]

\begin{definition}[$\infty$-quasi-isomorphism and $\infty$-isotopy]
An \emph{$\infty$-quasi-isomorphism} is an $\infty$-morphism $f : (A, \alpha) \rightsquigarrow (B, \beta)$ whose first component $f_{(0)} : A \xrightarrow{\cong} B$ is a quasi-isomorphism. 
An \emph{$\infty$-isotopy} is an $\infty$-morphism $f : (A, \alpha) \rightsquigarrow (A, \alpha')$ whose first component $f_{(0)}=\id_A$ is the identity.
\end{definition}

In the same way, an \emph{$\infty$-monomorphism} (resp. \emph{$\infty$-epimorphism}) is an $\infty$-morphism  whose first component is a monomorphism (resp. epimorphism). A \emph{strict morphism} is an $\infty$-morphism whose components vanish except possibly for the first one. In this case, this first component commutes strictly with the operations of the respective $\Omega\calC$-gebra structures.

\subsection{Resolution of the 2-colored properad}\label{subsec:ResColProp}

For any coaugmented dg coproperad $\calC$, the 2-colored dg properad encoding two $\Cobar \calC$-gebra structures related by a strict morphism of $\Cobar \calC$-gebras is given by 
\[
\left(\Cobar \calC\right)_{\bullet \to \bullet}\coloneqq 
\left(\G\left(s^{-1}\oC_0 \oplus \I^0_1 \oplus s^{-1}\oC_1  \right)/\left( R  \right), d_0+d_1 \right)\ , 
 \]
where the first two generating summands are isomorphic to $s^{-1}\oC$ with the inputs and the outputs of $\oC_0$  and $\oC_1$ colored respectively  by $0$ and $1$,
where the third summand $\I^0_1$  is isomorphic to $\I$ with input color $0$ and output color $1$. 
We denote by $i$ its generator. The relations are given by 
\[R\coloneqq
\left\{
\vcenter{\hbox{\begin{tikzpicture}[xscale=0.85, yscale=0.6]
	\draw[thick] (1,1)-- (1,1.4);
	\draw[thick] (0.25,1)-- (0.25,1.4);
	\draw[thick] (1.75,1)-- (1.75,1.4);
	\draw[thick] (0.5,0)-- (0.5,-0.4);	
	\draw[thick] (1.5,0)-- (1.5,-0.4);		
	\draw[fill=white] (0,0) rectangle (2,1);
	\draw (1,0.5) node {$s^{-1}c_0$};
	\draw (0.5,-0.8) node {$i$};	
	\draw (1.5,-0.8) node {$i$};	
	\end{tikzpicture}}}
\ -\ 
\vcenter{\hbox{\begin{tikzpicture}[xscale=0.85, yscale=0.6]
	\draw[thick] (1,1)-- (1,1.4);
	\draw[thick] (0.25,1)-- (0.25,1.4);
	\draw[thick] (1.75,1)-- (1.75,1.4);
	\draw[thick] (0.5,0)-- (0.5,-0.4);	
	\draw[thick] (1.5,0)-- (1.5,-0.4);		
	\draw[fill=white] (0,0) rectangle (2,1);
	\draw (1,0.5) node {$s^{-1}c_1$};
	\draw (1,1.8) node {$i$};	
	\draw (0.25,1.8) node {$i$};	
	\draw (1.75,1.8) node {$i$};			
	\end{tikzpicture}}}
	\ , c\in \oC
\right\}\ .\]
The image of $i$ under the differential is equal to $0$. One can see that the ideal $(R)$ is stable under the differential $d_0+d_1$ of the cobar construction; so this 2-colored dg properad is well defined. \\

In order to resolve it, we consider the following 2-colored dg properad:
\[
\left(\Cobar \calC\right)_{\bullet \rightsquigarrow \bullet}\coloneqq \left(\G\left(s^{-1}\oC_0 \oplus \calC^0_1 \oplus s^{-1}\oC_1  \right), \d \right)\ , 
 \]
where the second generating summand is isomorphic to $\calC$, 
the inputs colored by $0$ and the outputs colored by $1$. 
The differential $\d$ is equal to the differential $d_0+d_1$ of the cobar construction on the first and the third summands of the generating space. The image of the second summand of the generating space under the differential $\d$ 
 is given by 
\begin{align*}
	d_{\calC^0_1} &+ 
	\left(\calC^0_1 \xrightarrow{\Cop{(*)}{}}  
	\calC^0_1\tensor*[_{(*)}]{\rhd}{} \oC_0 \xrightarrow{\id \tensor*[_{(*)}]{\rhd}{} s^{-1}}
	\calC^0_1\tensor*[_{(*)}]{\rhd}{} s^{-1}\oC_0
	\right)\\
	&- 
	\left(\calC^0_1 \xrightarrow{\Cop{}{(*)}}  
	\oC_1\lhd_{(*)} \calC^0_1 \xrightarrow{s^{-1}\lhd_{(*)} \id}
	s^{-1}\oC_1\lhd_{(*)}\calC^0_1
	\right).
\end{align*}

\begin{lemma}\label{lem:d2=0}
The map $\d$ squares to zero.
\end{lemma}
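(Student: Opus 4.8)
The plan is to exploit that $\d$ is, by construction, a derivation of the free $2$-colored properad $\G\!\left(s^{-1}\oC_0 \oplus \calC^0_1 \oplus s^{-1}\oC_1\right)$, so that $\d^2$ is again a derivation and it suffices to verify $\d^2=0$ on each of the three generating summands. On $s^{-1}\oC_0$ and on $s^{-1}\oC_1$ the differential $\d$ coincides with the cobar differential $d_0+d_1$, whose image stays inside the monochromatic sub-properads $\G(s^{-1}\oC_0)$ and $\G(s^{-1}\oC_1)$; there $\d^2=0$ is exactly the statement that $\Cobar\calC$ is a dg properad, which rests on $d_{\calC}^2=0$, on $d_{\calC}$ being a coderivation for $\Delta_{(1,1)}$, and on the coassociativity of $\Delta_{(1,1)}$.

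All the real content is thus concentrated in the mixed summand $\calC^0_1$. Writing $\d|_{\calC^0_1}=d_{\calC}+R-L$, where $R$ is the right term built from $\Cop{(*)}{}$ (attaching desuspended $s^{-1}\oC_0$ generators below a surviving $\calC^0_1$ generator) and $L$ is the left term built from $\Cop{}{(*)}$ (attaching desuspended $s^{-1}\oC_1$ generators above one), I would expand $\d^2(c)$ via the derivation rule, feeding in $\d=d_0+d_1$ on each desuspended factor and $\d=d_{\calC}+R-L$ on the surviving $\calC^0_1$ factor, and then sort the output terms by the colour pattern of the underlying graph.

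The terms with one central $\calC^0_1$ generator and only colour $0$ below split into the purely internal corrections, which cancel because $d_{\calC}$ is a coderivation for $\Cop{(*)}{}$, and the two-level colour-$0$ corrections, which cancel by the coassociativity of $\Cop{(*)}{}$ with $\Delta_{(1,1)}$; these are precisely the relations making the bottom half behave like a cobar differential, so they vanish by the same computation as in the previous paragraph. The terms with one central $\calC^0_1$ generator and only colour $1$ above cancel symmetrically, using that $\Cop{}{(*)}$ is a coderivation and is coassociative with $\Delta_{(1,1)}$.

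The decisive cancellation is that of the genuinely mixed terms, carrying colour-$1$ generators above, a central $\calC^0_1$ generator, and colour-$0$ generators below. Such a configuration is produced in exactly two ways: by applying the $-L$ part of $\d$ to the top $\calC^0_1$ factor of $R(c)$, and by applying the $+R$ part of $\d$ to the bottom $\calC^0_1$ factor of $-L(c)$. A coassociativity relation for the decomposition map $\Delta$ of $\calC$, comparing the iterated splittings $\Cop{(*)}{}$-then-$\Cop{}{(*)}$ and $\Cop{}{(*)}$-then-$\Cop{(*)}{}$, shows that the two families index the very same three-fold decomposition of $c$, so they agree up to sign. The main obstacle is the sign bookkeeping: both families carry the same structural coefficient coming from the single $-L$ (resp. $+R$) factor, so the required relative minus sign must be supplied entirely by the Koszul sign incurred when the derivation crosses the suspended top factor in the $L$-term while crossing nothing in the $R$-term. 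Checking that the desuspensions $s^{-1}$ indeed produce this relative sign, so that the mixed contributions cancel in pairs, closes the argument, all other terms having been matched above.
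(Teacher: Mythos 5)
Your proposal is correct and follows essentially the same route as the paper's proof: reduce to the mixed generating summand $\calC^0_1$, cancel the cross terms with the internal differential by the coderivation property, match the iterated same-coloured splittings against the cobar part of $\d$ via coassociativity, and cancel the $L$-after-$R$ terms against the $R$-after-$L$ terms via coassociativity together with the Koszul sign coming from the order in which the two desuspensions are performed. The only slip is in your parenthetical description of $R$ and $L$, where the multiplicities are reversed --- each splits off a \emph{single} desuspended generator ($s^{-1}\oC_0$ on the input side for $R$, $s^{-1}\oC_1$ on the output side for $L$) while \emph{several} $\calC^0_1$ factors survive --- but this does not affect the cancellation scheme.
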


\begin{proof}
Since the map $\d$ is equal to the differential of the cobar construction on the first two summands, it is enough to check this on the third summand. The image of $\calC^0_1$ under $\d$ is made up of the following terms. There is first: $d_{\calC_1^0}^2\big(\calC^0_1\big)=0$. Let us denote by $(c_1,\ldots, c_n)\tensor*[_{(n)}]{\rhd}{} e$ the image of an element  $c\in 
\calC^0_1$ under $\Cop{(n)}{}$ and let us denote simply by $d$ the differential induced 
on $\G\left(s^{-1}\oC_0 \oplus \calC^0_1 \oplus s^{-1}\oC_1  \right)$
by the internal differential $d_\calC$ of $\calC$. We have
\begin{align*}
\left(\big(\id \tensor*[_{(n)}]{\rhd}{} s^{-1}\big)\circ  \Cop{(n)}{}\right) \circ d(c) = &  
-\sum_{i=1}^n (-1)^{|c_i|+\cdots+|c_n|}(c_1,\ldots, , d(c_i), \ldots, c_n)\tensor*[_{(n)}]{\rhd}{} (s^{-1}e)
\\
&+(c_1,\ldots, , d(c_i), \ldots, c_n)\tensor*[_{(n)}]{\rhd}{} (s^{-1}e)\\
=&-d\circ \left(\big(\id \tensor*[_{(n)}]{\rhd}{} s^{-1}\big)\circ  \Cop{(n)}{}\right)(c)\ .
\end{align*}

\medskip

A similar computation holds with $\Cop{}{(n)}$\ . In the end, all the terms of $\d^2(c)$ involving $d$ cancel.
The image of $c\in \calC^0_1$ under $\big(\id \tensor*[_{(*)}]{\rhd}{} s^{-1}\big)\circ  \Cop{(n)}{}$ followed by $d_0$ at the top vertex is equal to 
\[
-(-1)^{|e'|}(c_1, \ldots, c_n) \tensor*[_{(n)}]{\rhd}{} \left(s^{-1}e' \ibt s^{-1}e''\right)\ , 
\]
where $\Delta_{(1,1)}(e)\coloneqq e' \ibt e''$ using Sweedler type notations. Recall that $d_2\big(s^{-1}e\big)=-(-1)^{e'}\big( s^{-1}e' \ibt s^{-1}e''\big)$.
By the coassociativity of the decomposition coproduct of the coproperad $\calC$, such a term also appears, but with the sign $(-1)^{|e'|}$  when one iterates $\big(\id \tensor*[_{(*)}]{\rhd}{} s^{-1}\big)\circ  \Cop{(n)}{}$ twice. The other terms produced by iterating $\big(\id \tensor*[_{(*)}]{\rhd}{} s^{-1}\big)\circ  \Cop{(n)}{}$ twice are of the form 
  \[\begin{tikzpicture}[scale=0.7]
	\draw[thick] (11.5,6) -- (11.5,1) ;		
	\draw[thick] (6,5) -- (6,1) ;				
	\draw[thick] (5,6) -- (5,5) ;	
	\draw[thick] (0.5,1) -- (0.5,0) ;						
	\draw[thick] (1.5,1) -- (1.5,0) ;							
	\draw[thick] (2.5,1) -- (2.5,0) ;								
	\draw[thick] (6,1) -- (6,0) ;								
	\draw[thick] (9.75,1) -- (9.75,0) ;									
	\draw[thick] (11.25,1) -- (11.25,0) ;										
	\draw[thick] (0.75,6) -- (0.75,1) ;						
	\draw[thick] (9.5,6) to[out=270,in=90] (7.5,3);
	\draw[thick] (2.75,6) to[out=270,in=90] (5,1);	
	\draw[thick] (10,3) to[out=270,in=90] (10.5,1);		
	\draw[thick] (9,3) to[out=270,in=90] (9.5,1);			
	\draw[thick] (7.5,3) to[out=270,in=90] (7,1);			
	\draw[draw=white,double=black,double distance=2*\pgflinewidth,thick] (7.5,6)  to[out=270,in=90] (9.5,3); 
	\draw[draw=white,double=black,double distance=2*\pgflinewidth,thick] (4,5)  to[out=270,in=90] (2.25,1); 
	\draw[fill=white] (-0.3,0.75) rectangle (3.3,1.25);
	\draw[fill=white] (4.7,0.75) rectangle (7.3,1.25);
	\draw[fill=white] (8.7,0.75) rectangle (12.3,1.25);
	\draw[fill=white] (6.7,2.75) rectangle (10.3,3.25);
	\draw[fill=white] (3.7,4.75) rectangle (6.3,5.25);
	\draw (5,5) node {\small{$s^{-1} e''$}};
	\draw (8.5,3) node {\small{$s^{-1} e'$}};
	\draw (1.5,1) node {\small{$c_1$}};
	\draw (6,1) node {\small{$c_2$}};
	\draw (10.5,1) node {\small{$c_3$}};
\end{tikzpicture}\]
but each of them appears ``twice'' with different signs depending whether $e'$ or $e''$ comes first. So all these terms cancel. The same result holds true when one replaces $\Cop{(n)}{}$ by $\Cop{}{(n)}$. To conclude this part of the proof, it remains to show that the terms coming from $\Cop{(n)}{}$ followed by  $\Cop{}{(n)}$ are also produced 
by $\Cop{}{(n)}$ followed by  $\Cop{(n)}{}$ but with a different sign. In the first case, we get terms of the form 
\[
\left(c_1, \ldots, c_i, 
\left(s^{-1} f\right)\lhd_{(k)}(c_{i+1},\ldots, c_{i+k})  , 
c_{i+k+1}, \ldots, c_n
\right)
\tensor*[_{(n-k+1)}]{\rhd}{} \left(s^{-1} e\right)
\]
and, in the second case, we get terms of the form 
\[
\left(s^{-1} f\right)\lhd_{(n-k+1)}
\left(c_1, \ldots, c_i, 
(c_{i+1},\ldots, c_{i+k})\tensor*[_{(k)}]{\rhd}{} \left(s^{-1} e\right), 
c_{i+k+1}, \ldots, c_n\right) \ .
\]

\medskip

Using once again the coassociativity of the decomposition coproduct of the coproperad $\calC$, one can see that both types of term are equal: they are given by the summand of $\Delta^2(c)$ made up of 3-levels connected graphs made up of only one vertex labelled by $\oC$ on the top and on the bottom levels. Both terms come equipped with signs, but the second term above carries an extra minus sign due to the fact that one desuspends the bottom vertex labelled by $f$ before the top vertex labelled by $e$ and not the other way round. 
\end{proof}

\cref{lem:d2=0} ensures that we do get a  2-colored dg properad. A gebra structure over 
$\left(\Cobar \calC\right)_{\bullet \rightsquigarrow \bullet}$ amounts to the data of two $\Cobar \calC$-gebra structures related by an $\infty$-morphism by \cite[Proposition~3.16]{HLV19}.

\begin{proposition}\label{prop:ModelCatColoreddgProperads}
The category of 2-colored dg properads over a field $\k$ of characteristic $0$ admits a cofibrantly generated model category structure where 
weak equivalences and fibrations are given arity-wise and color-wise by quasi-isomorphisms and degree-wise epimorphisms respectively.
\end{proposition}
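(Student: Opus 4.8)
The plan is to obtain the model structure by \emph{transfer} (Kan's lifting theorem) along the free--forgetful adjunction
\[
\G \ : \ 2\text{-colored dg } \Sy\text{-bimodules} \ \rightleftarrows \ 2\text{-colored dg properads} \ : \ U,
\]
where $\G$ is the free $2$-colored properad functor. The source carries the projective model structure defined arity-wise and color-wise from $\dgVect$ over $\k$ (weak equivalences the quasi-isomorphisms, fibrations the degree-wise epimorphisms); it is cofibrantly generated with generating cofibrations $I$ (the maps $S^{n-1}\to D^n$ placed, with the appropriate regular $\Sy$-action, in a single arity and coloring) and generating acyclic cofibrations $J$ (the maps $0\to D^n$ likewise). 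Since a morphism of $2$-colored dg properads is declared a weak equivalence (resp.\ fibration) exactly when $U$ of it is one, it remains to check the hypotheses of the transfer theorem and to identify $\G(I)$, $\G(J)$ as the generating (acyclic) cofibrations.

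First I would dispatch the formal hypotheses. The category of $2$-colored dg properads is complete and cocomplete: limits and filtered colimits are created by $U$, and the remaining colimits exist because the adjunction is monadic with a finitary monad (the free properad functor is a sum over connected graphs, hence preserves filtered colimits). Consequently the category is locally presentable, every object is small, and the small object argument applies to both $\G(I)$ and $\G(J)$. This reduces the proof to the one nontrivial point, the \emph{acyclicity condition}: every relative $\G(J)$-cell complex must be a weak equivalence.

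The main obstacle is precisely this condition, and I would resolve it by Quillen's path object argument rather than by a direct analysis of pushouts along the free maps $\G(j)$ (which, for properads with several outputs, would force a delicate filtration of the free properad by connected graphs). For a $2$-colored dg properad $P$, set
\[
\mathrm{Path}(P)\coloneqq P\otimes \k[t,\d t],
\]
the arity-wise, color-wise tensor product with the commutative dg algebra $\k[t,\d t]$ of polynomial de Rham forms on the interval, with $|t|=0$ and $|\d t|=-1$. Since $\k[t,\d t]$ is graded commutative, $\mathrm{Path}(P)$ is again a $2$-colored dg properad, and the unit $\k\to\k[t,\d t]$ together with the evaluations $\mathrm{ev}_0,\mathrm{ev}_1\colon \k[t,\d t]\to\k$ induce a factorisation of the diagonal
\[
P \ \longrightarrow \ \mathrm{Path}(P) \ \longrightarrow \ P\times P.
\]
The second map is a degree-wise epimorphism, hence a fibration, because $\mathrm{ev}_0\oplus\mathrm{ev}_1\colon \k[t,\d t]\to\k\oplus\k$ is a degree-wise surjection over a field; the first map is a quasi-isomorphism because $\k\to\k[t,\d t]$ is one and tensoring complexes over a field preserves quasi-isomorphisms. \emph{It is exactly here that characteristic $0$ is used}: the complex $\k[t,\d t]$ computes the trivial cohomology of the interval only when $\d(t^n)=n\,t^{n-1}\d t$ detects all positive powers of $t$, that is, when $\mathrm{char}\,\k=0$.

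Finally I would assemble the argument. The terminal $2$-colored dg properad is the zero properad, so every object is fibrant; combined with the functorial path object $\mathrm{Path}(-)$, Quillen's path object argument (as used for transferred structures by Schwede--Shipley and by Berger--Moerdijk) shows that every map with the left lifting property against all fibrations, in particular every relative $\G(J)$-cell complex, is a weak equivalence. This verifies the acyclicity condition, and the transfer theorem then endows $2$-colored dg properads with the cofibrantly generated model structure having generating (acyclic) cofibrations $\G(I)$ (resp.\ $\G(J)$), whose weak equivalences and fibrations are the arity-wise, color-wise quasi-isomorphisms and degree-wise epimorphisms, as claimed.
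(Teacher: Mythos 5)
Your argument is correct and takes essentially the same route as the paper, whose proof simply invokes the transfer-plus-path-object argument of Johnson--Yau for $2$-colored dg props and notes that it rests on the existence of a functorial path object for chain complexes in characteristic $0$. The only cosmetic difference is your choice of path object $P\otimes\k[t,\d t]$, which is manifestly again a properad because $\k[t,\d t]$ is commutative, whereas the paper records the chain-level model $\Hom\left(\rmC_*\left(\Delta^1\right),A\right)\cong A\oplus A\oplus s^{-1}A$.
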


\begin{proof}
All the arguments of \cite[Theorem~1.1]{JY09}
for the category of 2-colored dg props hold as well in this setting. 
This result relies ultimately on the fact that the category of chain complexes  over a field $\k$ of characteristic $0$
admits a functorial path object given by 
\[\mathrm{Path}(A)\coloneqq \Hom\left(\rmC_*\left(\Delta^1\right), A\right) \cong(A\oplus A \oplus s^{-1}A, \d)\ ,    \  \text{with} \ \ 
\d\big(a,a',s^{-1}a''\big)=\big(da, da', s^{-1}\big(a-a'-da''\big)\big)~,\]
where $\rmC_*\left(\Delta^1\right)$ stands for the cellular chains of the interval. 
\end{proof}

In order to get suitable homotopical properties for this 2-colored dg properad, we need to restrict ourselves to the following 
class of dg cooperads. Let us recall that a coaugmented coproperad is called \emph{conilpotent} \cite[Definition~2.19]{HLV19} when its decomposition map comes from its comonadic one: 
\[\widetilde{\Delta}\ :\ \oC \to \mathcal{G}^c\left(\oC\right) = \bigoplus_{\mathrm{g} \in \overline{\mathrm{G}}} \mathrm{g}\left(\mathcal{\oC}\right)~, \]
where $\overline{\mathrm{G}}$ stands for the set of reduced directed connected graphs.

\begin{definition}[Weight of a graph and coradical filtration]\label{def:weightcoradical}\leavevmode
\begin{itemize}
\item[$\diamond$] The \emph{weight} of a directed connected graph is equal to the number of its vertices. We denote 
by $\mathcal{G}^c\left(\oC\right)^{(\leqslant k)}$ the summand of $\mathcal{G}^c\left(\oC\right)$ supported by graphs of weight less or equal to $k$. 

\item[$\diamond$] The \emph{coradical filtration} of the coaugmentation coideal $\oC$ of a conilpotent coproperad is defined by: 
\[\scrR_k \oC \coloneqq \left\{c \in \oC\ |\ \widetilde{\Delta}(c) \in \mathcal{G}^c\left(\oC\right)^{(\leqslant k)}
\right\}~. \]
\end{itemize}
\end{definition}

This means that an element $c$ of $\oC$ lives in $\scrR_k \oC$ when all the components of $\widetilde{\Delta}(c)$ are supported by graphs of weight less or equal to $k$. 
By definition, the coradical filtration is increasing and exhaustive:
\[
0=\scrR_{0} \oC\subset 
\scrR_1 \oC\subset \scrR_2 \oC\subset \cdots \subset \scrR_k \oC\subset \scrR_{k+1} \oC\subset \cdots 
\quad \& \quad 
\bigcup_{k \geqslant 1} \scrR_k \oC =\oC
\ .
\]

\begin{definition}[Conilpotent dg coproperad]\label{def:dgconil}
A \emph{conilpotent dg coproperad} is a dg coproperad $(\calC, \Delta, d)$ which is conilpotent and such that its differential lowers strictly the coradical filtration, that is 
\[d\left(\scrR_k \oC\right)\subset \scrR_{k-1} \oC~,\]
for any $k\geqslant 1$.
\end{definition} 

All the dg coproperads we will consider fit into this definition: 
the Koszul dual dg coproperad $\calP^{\ac}$ of a quadratic, possibly inhomogeneous, properad 
and the bar construction $\Bar \calP$ of an augmented properad without differential. 
(In the case of the bar construction of an augmented non-negatively graded dg properad, all the results below hold true but the arguments of the various proofs have to be modified including the filtration by the homological degree.) 

\begin{proposition}\label{prop:Res} 
Let $\calC$ be a conilpotent dg coproperad. 
The aforementioned  2-colored dg properad is a cofibrant resolution of the 2-colored dg properad encoding strict morphisms: 
\[
\left(\Cobar \calC\right)_{\bullet \rightsquigarrow \bullet} 
\xrightarrow{\sim}
\left(\Cobar \calC\right)_{\bullet \to \bullet}
\ .\]
\end{proposition}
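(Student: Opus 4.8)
The plan is to construct an explicit surjective morphism $p$ of $2$-colored dg properads $\left(\Cobar\calC\right)_{\bullet\rightsquigarrow\bullet}\to\left(\Cobar\calC\right)_{\bullet\to\bullet}$, to check that the source is cofibrant, and then to prove that $p$ is a quasi-isomorphism by showing that its kernel is acyclic. Since the source is quasi-free, it suffices to define $p$ on generators: I would let $p$ be the identity on $s^{-1}\oC_0$ and on $s^{-1}\oC_1$, and on the middle summand $\calC^0_1\cong\I\oplus\oC^0_1$ I would take $p$ to be the counit, i.e. $p$ sends the unit generator of $\I^0_1$ to $i$ and kills the reduced part $\oC^0_1$. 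The first task is to verify that $p$ is a chain map. On the first and third summands this is immediate, since $\d$ restricts there to the cobar differential, which $p$ preserves. On $\calC^0_1$ the only subtle point is the reduced part: for $c\in\oC^0_1$ one has $p(d_\calC c)=0$ because $d_\calC$ preserves the coaugmentation coideal, while in the two decomposition terms of $\d(c)$ the morphism $p$ annihilates every summand in which some interface factor lies in $\oC^0_1$; only the summands whose interface factors are all units $i$ survive, and these are exactly the two sides of a defining relation of $R$, hence equal in the target. Thus $p(\d c)=0=\d\big(p(c)\big)$.

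Second, I would establish cofibrancy. The source is quasi-free on $V\coloneqq s^{-1}\oC_0\oplus\calC^0_1\oplus s^{-1}\oC_1$, so by the model structure of \cref{prop:ModelCatColoreddgProperads} it suffices to filter $V$ by an exhaustive filtration along which $\d$ is triangular. I would use the coradical filtration $\scrR_\bullet\oC$ of \cref{def:weightcoradical}: since $\calC$ is a conilpotent dg coproperad, its internal differential lowers $\scrR_\bullet$ by one, and the decomposition terms entering $\d$ split one generator into two pieces of strictly smaller coradical degree together with a freshly created $s^{-1}\oC$-vertex. Hence $\d$ carries the $k$-th layer of $V$ into the sub-properad generated by the lower layers, exhibiting the source as a sequential colimit of pushouts of generating cofibrations, so that it is cofibrant.

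Third, I would prove that $p$ is a weak equivalence. As $p$ is arity- and color-wise surjective, it is a fibration, and from the associated short exact sequence of complexes it is a quasi-isomorphism if and only if $\ker p$ is acyclic. Concretely, $\ker p$ is spanned by the graphs carrying at least one reduced interface generator from $\oC^0_1$. The key mechanism, already visible in the computation of \cref{lem:d2=0}, is that for a coradical-primitive $c\in\oC^0_1$ the differential reads $\d(c)=(i,\dots,i)\tensor*[_{(*)}]{\rhd}{}s^{-1}\bar c_0-s^{-1}\bar c_1\lhd_{(*)}(i,\dots,i)$, which is precisely the relation of $R$ attached to $\bar c$; so adjoining $\oC^0_1$ with this differential resolves the relations $R$ in the Koszul--Tate sense. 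To turn this into a proof I would filter $\ker p$ by the total coradical weight carried by the interface generators: modulo lower filtration, $\d$ sends each reduced interface generator isomorphically onto the composite it bounds, so the associated graded is a total complex of acyclic two-term complexes $\oC^0_1\xrightarrow{\ \cong\ }R$, hence acyclic.

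Finally, I would deduce acyclicity of $\ker p$ from that of its associated graded, and I expect the convergence of this spectral sequence to be the main obstacle and the principal point where the properadic setting departs from the operadic one. Because properadic graphs of a fixed arity and color-degree may contain arbitrarily many internal edges (positive genus), the naive weight filtration need not be bounded on each component, so a plain ``associated graded acyclic $\Rightarrow$ total acyclic'' argument does not apply verbatim. I would therefore replace the crude weight filtration by a component-wise bounded filtration in the spirit of the density filtration of \cref{def:DensOp}, for which the spectral sequence converges and the computed acyclicity lifts. Granting this, $\ker p$ is acyclic, $p$ is an acyclic fibration, and together with the cofibrancy of the source this exhibits $\left(\Cobar\calC\right)_{\bullet\rightsquigarrow\bullet}$ as a cofibrant resolution of $\left(\Cobar\calC\right)_{\bullet\to\bullet}$.
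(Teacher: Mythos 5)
Your map $p$ and the verification that it is a chain map coincide with the paper's $\rho$, and your cofibrancy argument follows the paper's, up to one calibration you would need: with the unshifted coradical filtration, the term $(i,\dots,i)\tensor*[_{(*)}]{\rhd}{} s^{-1}c$ in $\d(c)$ creates a vertex $s^{-1}c\in s^{-1}\oC_0$ carrying the \emph{same} coradical weight as $c$, so $\d$ is not strictly triangular; the paper fixes this by placing $\scrR_{k+1}\big(s^{-1}\oC_{0}\big)$ and $\scrR_{k+1}\big(s^{-1}\oC_{1}\big)$ in filtration degree $k$ while keeping $\scrR_k\big(\oC^0_1\big)$ there. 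The genuine gaps are all in the weak-equivalence step. First, $\ker p$ is \emph{not} spanned by the graphs containing a reduced interface generator: it also contains the ideal $(R)$ inside $\G\big(s^{-1}\oC_0\oplus\I^0_1\oplus s^{-1}\oC_1\big)$, i.e.\ elements with only unit interface vertices that die in the quotient, so your candidate subcomplex does not even surject onto the kernel. Second, your filtration by the total coradical weight of the interface generators does not produce the differential you want on the associated graded: if $c\in\scrR_k\oC^0_1$ and $(c_1,\dots,c_n)\tensor*[_{(n)}]{\rhd}{}c_0$ is any component of its decomposition, then $k_0+k_1+\cdots+k_n\leqslant k$ with $k_0\geqslant 1$, so \emph{every} splitting (including the full one onto $(i,\dots,i)\tensor*[_{(*)}]{\rhd}{}s^{-1}c$) strictly lowers your filtration, and the induced differential on the associated graded retains none of the interface terms. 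The paper's filtration by $k_1+\cdots+k_l-l$ (weight minus number of non-unit vertices) is calibrated precisely so that the two full splittings survive on $E^0$ while the partial ones drop.

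Third, and most importantly, even with the correct associated graded the acyclicity you assert is not formal. The relations of $R$ overlap inside a graph (an $s^{-1}c_0$-vertex whose outputs all feed through $i$'s into a single $s^{-1}c_1$-vertex can be rewritten in two ways), so there are higher syzygies and the ``Koszul--Tate in one step'' picture of a direct sum of two-term complexes $\oC^0_1\xrightarrow{\cong}R$ does not hold; this is exactly the content that must be proved. The paper does it by identifying the $E^0$-page of the source with the augmented two-sided cobar construction $\Omega\calC \underset{\iota}{\boxtimes} \calC \underset{\iota}{\boxtimes} \Omega\calC$ of the coproperad with trivial decomposition map and invoking the Koszulness of the free properad on $\oC$, which gives homology concentrated in middle degree $0$ and equal to $\Omega\calC$. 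Some such input is indispensable and is missing from your argument. Finally, your convergence worry is misplaced here: the relevant filtration is increasing, bounded below and exhaustive, so the classical comparison theorem applies directly; the density filtration plays no role in this proof (the paper introduces it later, for the completeness of the convolution $\Linfty$-algebra).
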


\begin{proof}
Let us denote by $\rho$ the map from left to right defined by the identity on $s^{-1}\oC_0$ and $s^{-1}\oC_1$  and by the coaugmentation 
$\varepsilon : \calC^0_1\to  \I^0_1$ 
on $\calC^0_1$~. It commutes with the respective differentials. The only non-trivial case to check is for $c^0_1\in 
\overline{{\calC}^0_1}$, where 
\[
\rho\left(\d\left(c^0_1\right)\right)=
\varepsilon\left((d_\calC(c))^0_1\right)
\ +\ 
\vcenter{\hbox{\begin{tikzpicture}[xscale=0.75, yscale=0.5]
	\draw[thick] (1,1)-- (1,1.4);
	\draw[thick] (0.25,1)-- (0.25,1.4);
	\draw[thick] (1.75,1)-- (1.75,1.4);
	\draw[thick] (0.5,0)-- (0.5,-0.4);	
	\draw[thick] (1.5,0)-- (1.5,-0.4);		
	\draw[fill=white] (0,0) rectangle (2,1);
	\draw (1,0.5) node {$s^{-1}c_0$};
	\draw (0.5,-0.8) node {$i$};	
	\draw (1.5,-0.8) node {$i$};	
	\end{tikzpicture}}}
-\vcenter{\hbox{\begin{tikzpicture}[xscale=0.75, yscale=0.5]
	\draw[thick] (1,1)-- (1,1.4);
	\draw[thick] (0.25,1)-- (0.25,1.4);
	\draw[thick] (1.75,1)-- (1.75,1.4);
	\draw[thick] (0.5,0)-- (0.5,-0.4);	
	\draw[thick] (1.5,0)-- (1.5,-0.4);		
	\draw[fill=white] (0,0) rectangle (2,1);
	\draw (1,0.5) node {$s^{-1}c_1$};
	\draw (1,1.8) node {$i$};	
	\draw (0.25,1.8) node {$i$};	
	\draw (1.75,1.8) node {$i$};			
	\end{tikzpicture}}}
	=0 
	\ ,
\]
vanishes since $d_\calC(c)\in \oC=\ker \varepsilon$  and since  the sum of the last two terms lies in the ideal $(R)$. 

\medskip

Let us now show that the map $\rho$ is a quasi-isomorphism; to this extend, 
 we consider the filtrations $\scrF$ defined on both sides as follows. 
For any graph $g$ with vertices labelled by elements 
in  $s^{-1}\oC_0$, ${\calC}^0_1$, and $s^{-1}\oC_1$~, we consider only the 
labels $c^1, \ldots, c^l$ coming from $\oC_0$, $\overline{\calC}^0_1$, and $\oC_1$~. 
Since the underlying coproperad $\calC$ is conilpotent, its coradical filtration is exhaustive, so 
$c^i\in \scrR_{k_i} \oC$, for any $1\leqslant i \leqslant l$~. 
We declare that $g \in \scrF_k$ when $k_1+\cdots+k_l-l\leqslant k$~. 
By convention, we place $\I$ and $ \I^0_1$ in $\scrF_0$.
This defines an increasing and exhaustive filtration $0=\scrF_{-1}\subset \scrF_0\subset \scrF_1  \subset \cdots$ on both sides: it is enough to check that it is compatible with the relations $R$ on the right-hand side. 
It is straightforward to see that the morphism $\rho$ and the differentials preserve these filtrations:  the differentials $d_\calC$, $d_0$, and $d_1$ send $\scrF_k$ to $\scrF_{k-1}$ and  the 
differential $\d$ sends $\scrF_k$ to $\scrF_k$~. 

\medskip

On the first page $E^0$ of the associated spectral sequences, the right-hand side has no more differential and the only remaining part of the differential on the left-hand side amounts to 
\[
c^0_1 \mapsto  (i, \ldots, i) \tensor*[_{(m)}]{\rhd}{}
\left(s^{-1} c_0\right)- \left(s^{-1} c_1\right)  \lhd_{(n)} (i, \ldots, i) ~ ,\]
for any $c\in \calC(m,n)$~. 
This latter chain complex is isomorphic to the augmented two-sided cobar construction 
\[\Omega\calC \underset{\iota}{\boxtimes} \calC \underset{\iota}{\boxtimes} \Omega\calC\] of the coproperad defined on the underlying $\Sy$-bimodule $\calC$ equipped with the trivial decomposition map, that is where all the elements of $\oC$ are primitive, see \cite[Section~4.3]{Vallette07}. 
This coproperad is the Koszul dual coproperad of the free properad on $\oC$, which is Koszul. Therefore, if we consider the degree given by the number of elements coming from $\oC$ in the middle, the homology of this
augmented two-sided cobar construction is concentrated in degree $0$ where it is isomorphic to $\Omega \calC$~. This latter isomorphism is actually given by the identification $\Omega \calC\boxtimes \I \cong \I \boxtimes \Omega\calC$ where elements of $s^{-1}\oC$ on the left-hand side are pulled up from below the horizontal line made up of $\I$ to the area above it. 
This shows that $E^0(\rho)$ is a quasi-isomorphism.
Since the respective filtrations are complete (bounded below) and exhaustive, the Eilenberg--Moore comparison theorem 
\cite[Theorem~5.5.11]{WeibelBook} applies and shows that $\rho$ is a quasi-isomorphism. 

\medskip

It remains to prove that the left-hand 2-colored dg properad is cofibrant in the projective type model category given in \cref{prop:ModelCatColoreddgProperads}. To do so, we consider the increasing and exhaustive  filtration 
\[0= \scrF_{-1}\left( s^{-1}\oC_0 \oplus \calC^0_1 \oplus s^{-1}\oC_1\right)\subset 
\scrF_0\left( s^{-1}\oC_0 \oplus \calC^0_1 \oplus s^{-1}\oC_1\right) \subset  
\scrF_1\left( s^{-1}\oC_0 \oplus \calC^0_1 \oplus s^{-1}\oC_1\right)\subset \cdots \]
of the space of generators defined by 
\[
\scrF_k\left(s^{-1}\oC_0\right) \coloneqq \scrR_{k+1}\left(s^{-1}\oC_0\right)~, \ 
\scrF_k\left(s^{-1}\oC_1\right) \coloneqq \scrR_{k+1}\left(s^{-1}\oC_1\right)~, \ 
\scrF_k\left({\oC}^0_1\right) \coloneqq \scrR_{k}\left({\oC}^0_1\right)~, \ \text{and}~, 
\]
for $k\geqslant 0$~, and $\scrF_{0}\left({\calC}^0_1\right)\coloneqq \I^0_1$~.
Using the definition of the differential $\d$, it is straightforward to check that 
\[\d\left(\scrF_{0}\left( s^{-1}\oC_0 \oplus \calC^0_1 \oplus s^{-1}\oC_1\right)\right)=0
\quad \text{and}\]
and that 
\[
\d\left(\scrF_{k}\left( s^{-1}\oC_0 \oplus \calC^0_1 \oplus s^{-1}\oC_1\right)\right) \subset 
\G\left(\scrF_{k-1}\left( s^{-1}\oC_0 \oplus \calC^0_1 \oplus s^{-1}\oC_1\right)\right)~,  
\]
for any $k\geqslant 1$, 
which concludes the proof. 
\end{proof}

\begin{remark}
	When the coproperad $\calC$ is concentrated in arities $(1,n)$, that is when it is a cooperad, we recover the result of \cite[Lemma~55 and Proposition~56]{MerkulovVallette09I}.
\end{remark}

\cref{prop:Res} provides us with yet another homotopical justification for the definition of $\infty$-mor\-phi\-sms given in \cite{HLV19}.

\subsection{Characterisations of $\infty$-morphisms}
Forgetting the colors, we  now consider  the dg properad 
 \[\Cyl \left(\Cobar \calC \right)
  \coloneqq
\left(\G\left(s^{-1}\oC\oplus \oC  \oplus s^{-1}\oC  \right), \d \right)\ , 
 \]
defined by the same kind of differential: the elements of $\I^0_1$ that were appearing previously when applying the differential are now replaced by the similar elements of the unit $\I$ of this quasi-free properad. This explains the slight abuse of notation for the differential  (same symbol). 
Notice that a $\Cyl \left(\Cobar \calC \right)$-gebra structure amounts to two $\Cobar \calC$-gebra structures, on the same underlying chain complex, related by an $\infty$-isotopy this time. 

\begin{proposition}\label{prop:Cylinder}
The dg properad $\Cyl \left(\Cobar \calC \right)$
is a functorial very good cylinder of the cobar construction $\Cobar \calC$ for  
conilpotent dg coproperads $\calC$. 
\end{proposition}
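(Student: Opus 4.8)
The statement asks us to exhibit, functorially in the conilpotent dg coproperad $\calC$, a factorisation of the codiagonal map
\[
\Cobar\calC \sqcup \Cobar\calC \xrightarrow{\ i\ } \Cyl(\Cobar\calC)\xrightarrow{\ p\ } \Cobar\calC
\]
in which $i$ is a cofibration and $p$ is an acyclic fibration, all in the model category of single-colour dg properads, the analogue of \cref{prop:ModelCatColoreddgProperads}. The plan is to write down $i$ and $p$ explicitly and then verify the three conditions together with functoriality. Since free properads satisfy $\G(V)\sqcup\G(W)\cong\G(V\oplus W)$, the source is
\[
\Cobar\calC \sqcup \Cobar\calC \cong \left(\G\left(s^{-1}\oC\oplus s^{-1}\oC\right),\d\right),
\]
with $\d$ the sum of the two cobar differentials. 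I would take $i$ to be the inclusion of the first and third generating summands of $\Cyl(\Cobar\calC)=\G(s^{-1}\oC\oplus\oC\oplus s^{-1}\oC)$, and $p$ to be the dg properad morphism which is the identity on each outer copy of $s^{-1}\oC$ and sends the middle summand $\oC$ to $0$. Visibly $p\circ i=\nabla$. That $p$ is a chain map is the colour-free analogue of the computation for $\rho$ in \cref{prop:Res}: on a middle generator $c\in\oC$ the internal term $d_\calC(c)$ is killed since it stays in $\oC$, while the two cross-terms of $\d(c)$ become, after applying $p$, the element $s^{-1}c$ produced from above and the element $s^{-1}c$ produced from below, which coincide in $\Cobar\calC$ and cancel with the opposite signs they carry.

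To see that $i$ is a cofibration I would reuse the filtration argument ending the proof of \cref{prop:Res}, the only adaptation being that the role formerly played by the generator $\I^0_1$ is now played by the properad unit $\I$. Filtering the generating space by $\scrF_k(s^{-1}\oC)\coloneqq\scrR_{k+1}(s^{-1}\oC)$ on the two outer copies and $\scrF_k(\oC)\coloneqq\scrR_k(\oC)$ on the middle one, the explicit shape of $\d$ gives $\d(\scrF_0)=0$ and $\d(\scrF_k)\subset\G(\scrF_{k-1})$ for $k\geqslant1$. This exhibits $\Cyl(\Cobar\calC)$ as a quasi-free extension of $\Cobar\calC\sqcup\Cobar\calC$ by the middle generators $\oC$, with a differential lowering an exhaustive filtration; such a relative quasi-free extension is a cofibration in the model structure of \cref{prop:ModelCatColoreddgProperads}.

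The weak equivalence is the heart of the matter, and I would run once more the spectral sequence argument of \cref{prop:Res}, now with target $\Cobar\calC$ in place of $(\Cobar\calC)_{\bullet\to\bullet}$. Equipping both sides with the filtration for which a graph lies in $\scrF_k$ when $k_1+\cdots+k_l-l\leqslant k$, the differentials $d_\calC$, $d_0$, $d_1$ strictly lower $\scrF$ whereas the cross-term of $\d$ preserves it; hence on the first page $E^0$ the target $\Cobar\calC$ carries the zero differential, while the source reduces to the augmented two-sided cobar construction $\Cobar\calC \underset{\iota}{\boxtimes}\calC\underset{\iota}{\boxtimes}\Cobar\calC$ of the coproperad structure on $\calC$ with trivial decomposition. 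As recalled in \cref{prop:Res} after \cite[Section~4.3]{Vallette07}, the free properad on $\oC$ is Koszul, so the homology of this two-sided cobar construction is concentrated in the degree counting middle $\oC$-elements, where it is $\Cobar\calC$ via the identification $\Cobar\calC\boxtimes\I\cong\I\boxtimes\Cobar\calC$; this identifies $E^0(p)$ with that projection and shows it is a quasi-isomorphism. As both filtrations are bounded below and exhaustive, the Eilenberg--Moore comparison theorem \cite[Theorem~5.5.11]{WeibelBook} yields that $p$ is a quasi-isomorphism. Finally, $p$ restricts to an isomorphism from the first copy of $s^{-1}\oC$ onto the generators of $\Cobar\calC=\G(s^{-1}\oC)$, so $p$ is arity-wise and degree-wise surjective, hence a fibration; thus $p$ is an acyclic fibration. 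Functoriality in $\calC$ is immediate, since $i$, $p$ and $\d$ are built naturally from the coradical filtration and the decomposition maps of $\calC$.

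The main obstacle is the weak-equivalence step: one must make sure that forgetting the colours disturbs neither the identification of the page $E^0$ with the two-sided cobar construction nor the Koszulness input that computes its homology. Once this is seen to be literally the colour-free shadow of the computation already carried out in \cref{prop:Res}, the cofibration and fibration conditions are formal and functoriality is automatic.
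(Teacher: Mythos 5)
Your construction coincides with the paper's: your $i$ and $p$ are the paper's $\Phi$ and $\Psi$, the composite being the fold map and the surjectivity of $p$ are checked identically, and the cofibrancy of $i$ is obtained from the same coradical filtration on the middle generators (the paper phrases this via the characterisation of cofibrations in \cite[Proposition~37]{MerkulovVallette09II}, which is exactly your ``relative quasi-free extension whose differential lowers an exhaustive filtration''). For the weak equivalence the paper only says that ``the exact same arguments'' as in \cref{prop:Res} apply, so strategically you agree with it everywhere, including on functoriality.

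The one step that does not go through as you wrote it is the one you yourself flag as the main obstacle. After forgetting the colours, the $E^0$ page of the source is \emph{not} the augmented two-sided cobar construction $\Cobar\calC\underset{\iota}{\boxtimes}\calC\underset{\iota}{\boxtimes}\Cobar\calC$. In the $2$-coloured properad of \cref{prop:Res}, the colouring of inputs and outputs forces every graph to split into three ordered levels (all $\oC_0$-vertices, then the $\calC^0_1$-vertices, then the $\oC_1$-vertices), and this is precisely what identifies the underlying object with the two-sided cobar construction; in $\G\left(s^{-1}\oC\oplus\oC\oplus s^{-1}\oC\right)$ the three types of vertices may interleave arbitrarily, so the graded object is strictly larger and the Koszulness computation of \cite[Section~4.3]{Vallette07} does not apply verbatim. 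The repair is short and in fact dispenses with the Koszulness input altogether: with the filtration you chose, the only part of $\d$ surviving on $E^0$ is the derivation induced on generators by $\delta(c)= s^{-1}c_{(1)}-s^{-1}c_{(2)}$ (the labels $c_1,\dots,c_n$ in the cross-terms are forced to be identities on the associated graded, and identities are absorbed by the properad unit in the colour-free setting), so $E^0$ of the source is $\G$ applied to the dg $\Sy$-bimodule $\left(s^{-1}\oC\oplus\oC\oplus s^{-1}\oC,\delta\right)$. The projection of this dg $\Sy$-bimodule onto $\left(s^{-1}\oC,0\right)$ is a quasi-isomorphism, since its kernel is $\oC$ mapped isomorphically by $\delta$ onto the antidiagonal copy of $s^{-1}\oC$, hence acyclic; as the free properad functor preserves quasi-isomorphisms over a field of characteristic $0$, the map $E^0(\Psi)$ is a quasi-isomorphism and the Eilenberg--Moore comparison theorem concludes as in \cref{prop:Res}.
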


\begin{proof}
We consider the map 
\[\Phi \ : \ 
\Cobar \calC \vee \Cobar \calC \cong \left(\G\left(s^{-1}\oC \oplus s^{-1}\oC \right), d_0+d_1 \right) \to 
\Cyl \left(\Cobar \calC \right) =\left(\G\left(s^{-1}\oC \oplus \oC \oplus s^{-1}\oC  \right), \d \right)
\ ,\]
which sends identically the two generating summands $s^{-1}\oC$ to their respective counterparts on the right-hand side. We also 
 consider the map 
\[\Psi\ : \ \Cyl \left(\Cobar \calC \right) =\left(\G\left(s^{-1}\oC \oplus \oC\oplus s^{-1}\oC \right), \d \right) \to 
\Cobar \calC = \left(\G\left(s^{-1}\oC  \right), d \right)~, \]
which sends identically the two generating summands $s^{-1}\oC$ to the generating summand on the right-hand side 
and which sends the generating summand $\oC$  to zero. 
It is straightforward to check that they are morphisms of dg properads. Their composite 
\[ \Cobar \calC \vee \Cobar \calC \xrightarrow{\Phi} 
\Cyl \left(\Cobar \calC \right) \xrightarrow{\Psi}
\Cobar \calC
\]
is obviously equal to the fold map. 

\medskip

The exact same arguments as the ones used in of \cref{prop:Res} prove that the map $\Psi$ is a quasi-isomorphism. 
 The map $\Psi$ is always a fibration since it is a degreewise surjection in every arity. 
 
 \medskip

 To see that the map $\Phi$ is a cofibration, we use the characterisation given in \cite[Proposition~37]{MerkulovVallette09II}: 
 the map $\Phi$ embeds $\Cobar \calC \vee \Cobar \calC$ into $\Cobar \calC \vee \Cobar \calC \vee \G\big(\oC\big)$, where the extra generators in $\oC$ come equipped with the increasing and exhaustive coradical filtration 
 satisfying 
\[
\d\left(\scrR_k \oC\right) \subset 
\Cobar \calC \vee \Cobar \calC \vee 
\G\left(\scrR_{k-1}\oC\right)~,  
\]
for any $k\geqslant 1$~.  
(Notice that there is a typo in the published version of \emph{loc. cit.}: one should read $d : S_i \to \calP\vee \mathcal{F}(S_{i-1})$ instead of $d : S_i \to \mathcal{F}(S_{i-1})$.) The method to construct a retract from free $\Sy$-bimodules is given in   
\cite[Lemma~39]{MerkulovVallette09II}.

\end{proof}

This result shows that the deformation theory of morphisms of dg properads $\Omega \calC \to \End_A$ up to homotopy is equivalent to the deformation theory of $\Omega \calC$-gebra structures on $A$ up to $\infty$-isotopies. In other words, the problems (1) and (2) of \cite[Sections~6.2 \& 6.6]{GY19} are equivalent on the properadic level. 

\begin{remark}
	When the coproperad $\calC$ is concentrated in arities $(1,n)$, that is when it is a cooperad, we recover Theorem~5.1.4 of \cite{Fresse09ter} as claimed in Section~5.2.3 of \emph{loc. cit.}.
\end{remark}

This result gives another homotopical property satisfied by the notion of an $\infty$-isotopy.

\begin{proposition}\label{prop:InftyIsot}
Let $\calC$ be a conilpotent dg coproperad. 
If two $\Cobar \calC$-gebra structures $\alpha$ and $\alpha'$ on the same underlying chain complex $A$ are $\infty$-isotopic then they are related by a two-arrows zig-zag of (strict) quasi-isomorphisms of $\Cobar \calC$-gebras 
\[
\begin{array}{c}
\exists\  \text{$\infty$-isotopy} \\
\begin{tikzcd}[column sep=normal]
	(A,\alpha) \ar[r,squiggly,"="] 
	& (A, \alpha')
\end{tikzcd} 
\end{array}
\Longrightarrow
\begin{array}{c}
\exists\  \text{two-arrows zig-zag of quasi-isomorphisms} \\
\begin{tikzcd}
	(A, \alpha) 
	& \point \ar[l,"\sim"', "g"] \ar[r,"\sim", "h"'] 
	& (A, \alpha')
	\end{tikzcd}
  \ ,
\end{array}
\]
where the respective isomorphisms in homology satisfy $H(g)=H(h)^{-1}$~.
\end{proposition}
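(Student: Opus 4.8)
The plan is to reinterpret the hypothesis through the cylinder of \cref{prop:Cylinder} and then to convert the resulting homotopy of structure maps into a span of strict quasi-isomorphisms by means of the endomorphism properad of a path-object diagram. By the remark preceding \cref{prop:Cylinder}, an $\infty$-isotopy $(A,\alpha)\rightsquigarrow(A,\alpha')$ is the same datum as a morphism of dg properads $H\colon\Cyl(\Cobar\calC)\to\End_A$ satisfying $H\circ i_0=\alpha$ and $H\circ i_1=\alpha'$, where $i_0,i_1\colon\Cobar\calC\to\Cyl(\Cobar\calC)$ are the two structure maps and $\Psi\circ i_0=\Psi\circ i_1=\id$. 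Since $\Phi$ is a cofibration and $\Psi$ an acyclic fibration, the maps $i_0,i_1$ are acyclic cofibrations, so $\alpha$ and $\alpha'$ are left homotopic morphisms $\alpha,\alpha'\colon\Cobar\calC\to\End_A$ through a very good cylinder, $\Cobar\calC$ being cofibrant.

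To produce the zig-zag, I would work with the functorial path object $\mathrm{Path}(A)=\Hom(\rmC_*(\Delta^1),A)\cong(A\oplus A\oplus s^{-1}A,\d)$ of \cref{prop:ModelCatColoreddgProperads}, whose two evaluation maps $p_0,p_1\colon\mathrm{Path}(A)\to A$ are acyclic fibrations of complexes and whose diagonal $\delta\colon A\to\mathrm{Path}(A)$ is a quasi-isomorphism with $p_0\delta=p_1\delta=\id_A$. The goal is a single $\Cobar\calC$-gebra structure $\gamma$ on $\mathrm{Path}(A)$ for which $p_0$ and $p_1$ are strict morphisms onto $(A,\alpha)$ and $(A,\alpha')$; taking $\point=(\mathrm{Path}(A),\gamma)$, $g=p_0$ and $h=p_1$ then yields the asserted two-arrows zig-zag, and both arms are quasi-isomorphisms because $p_0,p_1$ are. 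For the homology statement, $p_0\delta=p_1\delta=\id_A$ together with $\delta$ being a quasi-isomorphism forces $\H(p_0)=\H(p_1)=\H(\delta)^{-1}$, so that $g$ and $h$ induce the same identification $\H(\mathrm{Path}(A))\cong\H(A)$ and the zig-zag realises $\id_{\H(A)}$, which is exactly the relation $\H(g)=\H(h)^{-1}$ compatible with the first component $\id_A$ of the $\infty$-isotopy.

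The crux is the construction of the compatible structure $\gamma$. In the operadic case this is immediate: the action of $\uCom$ on the cochains of the interval identifies $\mathrm{Path}(\End_A)\cong\End_{\mathrm{Path}(A)}$, and the triple $(\alpha,\alpha',h)$ recorded by $H$ is precisely a point of $\mathrm{Path}(\End_A)$, hence a structure on $\mathrm{Path}(A)$. For properads this identification is unavailable, since the Hadamard product with $\uCom$ destroys all operations with more than one output; so $\gamma$ cannot be written by an explicit formula and must be obtained homotopically. I would build it from the endomorphism properad $\End_D$ of the span $D=\big(A\xleftarrow{p_0}\mathrm{Path}(A)\xrightarrow{p_1}A\big)$, that is the pullback $\End_{p_0}\times_{\End_{\mathrm{Path}(A)}}\End_{p_1}$, a morphism $\Cobar\calC\to\End_D$ being exactly such a compatible $\gamma$. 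Because $p_0$ and $p_1$ are acyclic fibrations of complexes, the endpoint projections $\End_{p_0}\to\End_A$ and $\End_{p_1}\to\End_A$ should be acyclic fibrations of dg properads; using the cofibrancy of $\Cobar\calC$ together with the homotopy $H$ -- which guarantees that the two induced structures are homotopic and so fit over the span -- one solves the corresponding lifting problem to produce $\Cobar\calC\to\End_D$. Verifying that these projections are acyclic fibrations and that $H$ indeed makes the lifting problem solvable over the pullback is the main technical obstacle, and it is precisely the point where the properadic argument must depart from the operadic one.
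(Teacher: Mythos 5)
Your proposal is correct and follows essentially the same route as the paper: reduce the $\infty$-isotopy to a left homotopy of the two structure maps $\Cobar\calC \to \End_A$ via \cref{prop:Cylinder} and the cofibrancy of $\Cobar\calC$, then transport a $\Cobar\calC$-gebra structure onto the functorial path object $\mathrm{Path}(A)$ so that the two projections become strict quasi-isomorphisms, with the homology claim read off from the explicit formulas for $g$ and $h$. The ``main technical obstacle'' you flag --- the lifting argument through the endomorphism properad of the span $A \leftarrow \mathrm{Path}(A) \rightarrow A$ --- is precisely what the paper outsources to Theorems~C and~8.4 of \cite{Fresse10ter} (adapted from props to properads), so your sketch reconstructs the cited argument rather than departing from it.
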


\begin{proof}
The assumption that the dg coproperad $\calC$ is conilpotent ensures that the dg properad $\Cobar\calC$ is cofibrant in the model category of dg properads \cite[Appendix~A]{MerkulovVallette09II} in the same way as in the proof of \cref{prop:Res}.
\cref{prop:Cylinder} shows that two $\Cobar \calC$-gebra structures $\alpha, \alpha'$ are $\infty$-isotopic if and only if 
their two associated structural maps $\Cobar \calC \to \End_A$ are (left) homotopic. 
The methods developed in \cite[Theorem~C \& Theorem~8.4]{Fresse10ter} on the level of props hold as well for properads, see Section~5.2.3 of \emph{loc. cit.}:  
since every chain complex over a field is fibrant and cofibrant, two homotopic $\Cobar \calC$-gebra structures $\alpha$ and $\alpha'$ on the same underlying chain complex $A$ fits into a zig-zag of (strict) quasi-isomorphisms 
\[\begin{tikzcd}
	(A, \alpha) 
	& (\mathrm{Path}(A), \varphi) \ar[l,"\sim"', "g"] \ar[r,"\sim", "h"'] 
	& (A, \alpha')
	\end{tikzcd}~, 
	\]
where the underlying quasi-isomorphisms $g$ and $h$ are the ones coming from the factorisation of the diagonal map of $A$ by the associated path object: 
\[\begin{tikzcd}
	A \ar[r,hook, "\sim"]  & 
	\mathrm{Path}(A)\cong(A\oplus A \oplus s^{-1}A, d)  \ar[r,two heads, "{(g,h)}"'] 
	& A \oplus A 
	\end{tikzcd}~.
	\]
Since these data satisfy 
\[d\big(a,a',s^{-1}a''\big)=\big(da, da', s^{-1}\big(a-a'-da''\big)\big)~,\quad  g\big(a, a',a''\big)=a~, \quad \text{and} \quad 
h\big(a, a',a''\big)=a'~,\]
we get the last claim, that is $H(g)=H(h)^{-1}$~. 
\end{proof}

\begin{remark}
The converse statement is obviously wrong. A quick counterexample is provided by any (associative) algebra structure $\alpha$ on a vector space $A$, that is with trivial differential, and by any isomorphism $f$ of the underlying space $A$ which fails to preserve $\alpha$. In this case, one can consider the transported associative algebra structure $\alpha'$ on $A$ under $f$. This defines a zig-zag of (quasi-)isomorphisms between $\alpha$ and $\alpha'$. But there cannot exist any $\infty$-isotopy $g$ from $\alpha$ to $\alpha'$ because, in this case, the first component $\left( g^{-1} \circledcirc f\right)_{(0)}=f$ would be an isomorphism of the algebra $\alpha$.
\end{remark}

In order to get a characterization of zig-zags of (strict) quasi-isomorphisms in terms of $\infty$-morphisms, one has to consider the weaker class of  
 $\infty$-quasi-isomorphisms.

\begin{theorem}\label{thm:MainInftyQi}
Let $\calC$ be a conilpotent dg  coproperad.
Two $\Cobar \calC$-gebra structures $(A,\alpha)$ and $(B, \beta)$ are $\infty$-quasi-isomorphic if and only if they are related by a zig-zag of (strict) quasi-isomorphisms of $\Cobar \calC$-gebras: 
\[
\begin{array}{c}
\exists\  \text{$\infty$-quasi-isomorphism} \\
\begin{tikzcd}[column sep=normal]
	(A,\alpha) \ar[r,squiggly,"\sim"] 
	& (B, \beta)
\end{tikzcd} 
\end{array}
\Longleftrightarrow
\begin{array}{c}
 \exists\  \text{zig-zag of quasi-isomorphisms} \\
\begin{tikzcd}[column sep=small]
	(A, \alpha) \ar[r,"\sim"] 
	& \point
	& \point \ar[l,"\sim"'] \ar[r,dotted, no head]
	&\point
	& \point \ar[l,"\sim"'] \ar[r,"\sim"] 
	& (B, \beta)
	\end{tikzcd}  \ .
\end{array}
\]
\end{theorem}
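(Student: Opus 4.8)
\emph{Structure of the argument and the easy implication.} I would prove the two implications separately: the converse one is essentially formal, while the direct one carries all the weight. For $(\Leftarrow)$, note that a strict quasi-isomorphism is in particular an $\infty$-quasi-isomorphism, with all higher components vanishing. Since the $\Cobar\calC$-gebras together with their $\infty$-morphisms and the composite $\circledcirc$ form a category, it suffices to invert the backward-pointing arrows of the given zig-zag: this is possible because every $\infty$-quasi-isomorphism admits an $\infty$-quasi-isomorphism in the opposite direction, the quasi-invertibility having been settled on the properadic level in \cite{HLV19}. Composing all the resulting forward $\infty$-morphisms with $\circledcirc$ yields a single $\infty$-morphism $(A,\alpha)\rightsquigarrow(B,\beta)$ whose first component is a composite of quasi-isomorphisms, hence a quasi-isomorphism; thus it is an $\infty$-quasi-isomorphism.

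\emph{Reduction of the hard implication to an isotopy.} For $(\Rightarrow)$, I would first recast the data of an $\infty$-morphism $f\colon(A,\alpha)\rightsquigarrow(B,\beta)$ as a morphism of $2$-colored dg properads $F\colon(\Cobar\calC)_{\bullet\rightsquigarrow\bullet}\to \End_{(A,B)}$ into the $2$-colored endomorphism properad of the pair $(A,B)$ (color $0$ on $A$, color $1$ on $B$), with $\alpha,\beta$ the color-wise restrictions and $f$ the mixed component. The statement is immediate when $f_{(0)}$ is an \emph{isomorphism} of chain complexes: transporting $\beta$ along $f_{(0)}^{-1}$ yields a structure $\beta'$ on $A$ for which $f_{(0)}\colon(A,\beta')\to(B,\beta)$ is a strict isomorphism, so that $f_{(0)}^{-1}\circledcirc f\colon(A,\alpha)\rightsquigarrow(A,\beta')$ is an $\infty$-isotopy, and \cref{prop:InftyIsot} then connects $(A,\alpha)$ and $(A,\beta')\cong(B,\beta)$ by a two-arrows zig-zag of strict quasi-isomorphisms. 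The entire problem therefore reduces to replacing the general quasi-isomorphism $f_{(0)}$ by an isomorphism through strict quasi-isomorphisms of $\Cobar\calC$-gebras.

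\emph{The mechanism.} To perform this replacement I would invoke the model structure of \cref{prop:ModelCatColoreddgProperads}, the cofibrancy of $\Cobar\calC$ for conilpotent $\calC$ (as used in \cref{prop:Res} and \cref{prop:InftyIsot}), and the theory of endomorphism properads of diagrams of \cite{Fresse10ter}. Over a field the weak equivalence $f_{(0)}$ factors through a diagram of complexes with acyclic cofibration/fibration legs, for instance a mapping cylinder $A\hookrightarrow M\twoheadrightarrow B$. Restriction along such a leg should induce an acyclic fibration of dg properads between the corresponding endomorphism properads of the diagram; since $\Cobar\calC$ is cofibrant, the structures $\alpha$ and $\beta$ then lift strictly along these legs, producing strict quasi-isomorphisms from $(A,\alpha)$ and from $(B,\beta)$ to $\Cobar\calC$-gebras carried by a common complex. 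Composing $f$ with the ($\infty$-invertible) strict quasi-isomorphisms so obtained brings us, up to the equivalence generated by strict quasi-isomorphisms, to the isomorphism-first-component situation already handled above, and \cref{prop:InftyIsot} closes the zig-zag.

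\emph{Main obstacle.} The crux, and precisely where the passage from operads to properads is genuinely delicate, is the claim that restriction along an acyclic (co)fibration of complexes induces an acyclic fibration of the associated endomorphism \emph{properads} of diagrams: contrary to the one-output operadic situation, the several-outputs bookkeeping blocks the naive comparison, so one must run the refined model-categorical arguments of \cite{Fresse10ter} together with the filtration-driven spectral-sequence convergence already exploited in \cref{prop:Res}. Establishing this fibrancy statement, and checking that the resulting lifts assemble into an honest zig-zag compatible with $f$, is where the real work lies; everything else is formal manipulation of $\circledcirc$ and an appeal to \cref{prop:InftyIsot}.
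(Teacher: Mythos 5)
Your easy direction ($\Leftarrow$) matches the paper's: invert the backward arrows via the homological invertibility of $\infty$-quasi-isomorphisms from \cite{HLV19} and compose. The hard direction, however, has a genuine gap at the reduction step. You correctly observe that the statement follows from \cref{prop:InftyIsot} when $f_{(0)}$ is an isomorphism, but the proposed mechanism does not actually reduce to that case. After factoring $f_{(0)}$ as $A\hookrightarrow M\twoheadrightarrow B$ and lifting $\alpha$ and $\beta$ to structures $\alpha_M,\beta_M$ on $M$, you still have to relate $\alpha_M$ to $\beta_M$ using $f$; but the strict quasi-isomorphisms you would compose with are only invertible \emph{homologically}, i.e. their $\infty$-inverses have first components that are chain-level quasi-isomorphisms, not chain-level inverses. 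The resulting $\infty$-morphism $(M,\alpha_M)\rightsquigarrow(M,\beta_M)$ therefore has first component a self-map of $M$ inducing the identity on $\H(M)$ --- not an isomorphism of $M$, since $M$ (a mapping cylinder) carries a nontrivial differential. So you never land in the ``isomorphism-first-component'' situation, and \cref{prop:InftyIsot} does not apply. A second, related omission: the transfer/lifting machinery you invoke produces \emph{two different} structures on the intermediate complex, related only by a homotopy of properad morphisms (this is why \cite[Proposition~7.4]{Fresse10ter} has a path object in the middle of its zig-zag), and your proposal does not explain how the given $f$ ties these two lifts together.

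The paper closes exactly this gap by routing everything through the homology $H=\H(A)\cong\H(B)$, which has zero differential: there, ``first component inducing the identity on homology'' literally means ``$\infty$-isotopy.'' Concretely, it uses the homotopy transfer theorem of \cite{HLV19} to put a structure $\theta$ on $H$ with an $\infty$-quasi-isomorphism $i\colon(H,\theta)\rightsquigarrow(A,\alpha)$, then compares the two available transfer mechanisms (Fresse's strict-zig-zag transfer applied to $i_{(0)}$ and to $(f\circledcirc i)_{(0)}$, versus the explicit properadic HTT) via the homological invertibility theorem, carefully tracking the induced maps on homology so as to obtain genuine $\infty$-isotopies $(H,\theta)\rightsquigarrow(H,\theta')$ and $(H,\theta)\rightsquigarrow(H,\theta'')$; only then does \cref{prop:InftyIsot} supply the missing strict zig-zag between $(H,\theta')$ and $(H,\theta'')$. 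To repair your argument you would need to replace the mapping cylinder $M$ by the homology $H$ and add this comparison of transferred structures; the model-categorical fibrancy claim you flag as the ``main obstacle'' is real work too, but it is not the only missing piece.
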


\begin{proof}\leavevmode
\begin{itemize}
\item[\sc ($\Longleftarrow$)] When two $\Cobar \calC$-gebra structures are 
 related by a zig-zag of (strict) quasi-isomorphisms, then there exists 
 a direct $\infty$-quasi-isomorphism between them, by the homological invertibility of $\infty$-quasi-isomorphisms
\cite[Theorem~4.18]{HLV19}.

\item[\sc ($\Longrightarrow$)] Let us first settle a preliminary result which gives a precise comparison between two ways of performing the homotopy transfer theorem. 
On one hand, notice that the methods and results developed in \cite[Section~7]{Fresse10ter} on the level of props, hold as well for properads. For instance, \cite[Proposition~7.4]{Fresse10ter} shows that for any quasi-isomorphism 
$f_{(0)} : C \stackrel{\sim}{\rightarrow} D$ and any $\Cobar \calC$-gebra structure $\delta$ on $D$, there exists
an $\Cobar \calC$-gebra structure $\gamma$ on $C$ and 
a zig-zag of quasi-isomorphisms 
\[
\begin{tikzcd}[column sep=small]
	(C, \gamma) \ar[r,"\sim", "i"'] 
	& (Z, \rho) 
	& (\mathrm{Path(Z)}, \varphi) \ar[r,"\sim", "h"'] \ar[l,"\sim"', "g"] 
	&(Z, \sigma) \ar[r,"\sim", "p"'] 
	& (D, \delta)\ ,
	\end{tikzcd} 
\]
where $\begin{tikzcd}[column sep=small]
	C \ar[r,"\sim", "i"'] 
	&Z \ar[r,"\sim", "p"'] 
	& D
	\end{tikzcd} $
is the acyclic fibration-cofibration factorisation of $f_{(0)}=pi$ and where
$g$ and $h$ are the two quasi-isomorphisms involved in the factorisation of the diagonal map of $Z$ by the associated path object, like in the proof of \cref{prop:InftyIsot}. By the same arguments as given above, one gets the following relation in homology:
\[
\H(p)\circ \H(h) \circ \H(g)^{-1}\circ \H(i)=\H(p) \circ \H(i)=\H\big(f_{(0)}\big)~.
\]
On the other hand, the other form of the homotopy transfer theorem given in \cite[Theorem~4.14]{HLV19} provides us with 
an $\Cobar \calC$-gebra structure $\gamma'$ on $C$ such that 
there exists an $\infty$-quasi-isomorphism 
${f}\colon (C,\gamma') \rightsquigarrow (D, \delta)$ whose first component is equal to 
$({f})_{(0)}=f_{(0)}$~. 
The homological invertibility of $\infty$-quasi-isomorphisms established in \cite[Theorem~4.18]{HLV19} shows that  there exists an $\infty$-quasi-isomorphism $\widetilde{f} \colon (C,\gamma) \rightsquigarrow (D, \delta)$ such that 
\[
\H\left(\widetilde{f}_{(0)}\right)=\H\big(f_{(0)}\big)~. 
\]
In the end, we get an $\infty$-quasi-isomorphism 
\[
g\coloneq \widetilde{f}^{-1} \circ f \colon (C, \gamma') 
\stackrel{\sim}{\rightsquigarrow} (C, \gamma)
\]
whose first component realises the identity on homology: $\H(g_{(0)})=\id_{\H(C)}$\ . 

\medskip

Let us now prove the claim of the statement. Suppose  that there exists an $\infty$-quasi-iso\-mor\-phi\-sm 
$f\colon (A,\alpha) \stackrel{\sim}{\rightsquigarrow} (B, \beta)$\ . We consider the homology of both underlying chain complexes: $H\coloneq \H(A) \cong \H(B)$\ . Since we work over a field $\k$, there exists a contraction from $A$ to $H$, which induces an $\Cobar \calC$-gebra structure $\theta$ on $H$ and an $\infty$-quasi-isomorphism $i \colon (H,\theta) \stackrel{\sim}{\rightsquigarrow} (A, \alpha)$, by  the homotopy transfer theorem \cite[Theorem~4.14]{HLV19}. Applying the above-mentioned results to the quasi-isomorphism $i_{(0)}$~, we get another $\Cobar \calC$-structure $\theta'$ on $H$, a zig-zag of (strict) quasi-isomorphisms from $(A, \alpha)$ to $(H, \theta')$, and an $\infty$-quasi-isomorphism 
$(H,\theta) \stackrel{=}{\rightsquigarrow} (H, \theta')$ whose first component realises the identity on homology, that is an 
$\infty$-isotopy. 
Then, we consider the composite 
$f \circledcirc i$ which provides us with an $\infty$-quasi-isomorphism $j \colon (H,\theta) \stackrel{\sim}{\rightsquigarrow} (B,\beta)$\ . 
We apply again the abovementioned result to the quasi-isomorphism $j_{(0)}$ this time: 
 it induces 
yet another $\Cobar \calC$-structure $\theta''$ on $H$, 
a zig-zig of quasi-isomorphisms from $(B, \beta)$ to $(H, \theta'')$, 
and an $\infty$-isotopy $(H,\theta) \stackrel{=}{\rightsquigarrow} (H, \theta'')$\ . In the end, the two $\Cobar \calC$-gebra structures $(H, \theta')$ and $(H, \theta'')$ are $\infty$-isotopic, so they are related by a zig-zag of quasi-isomorphisms by \cref{prop:InftyIsot}, which  concludes the proof. 
\[\begin{tikzcd}
		(A,\alpha)
		\ar[rr,squiggly,"\sim"]
			&
			& (B,\beta) 
		\\
		\point 
		\ar[u,"\rotatebox{90}{$\sim$}"]
		&& \point \ar[u,"\rotatebox{90}{$\sim$}"']
		\\
		\point 
		\ar[u,"\rotatebox{90}{$\sim$}"]
		\ar[d,"\rotatebox{90}{$\sim$}"']
			& (H,\theta)
			\ar[luu,squiggly,"\sim"']
			\ar[ruu,squiggly,"\sim"]
			\ar[ddl,squiggly,"="']
			\ar[ddr,squiggly,"="]
			&
			\point \ar[u,"\rotatebox{90}{$\sim$}"']\ar[d,"\rotatebox{90}{$\sim$}"]
		\\
		\point 
			&
			&\point 
		\\
		(H,\theta')\ar[u,"\rotatebox{90}{$\sim$}"]
		& \ar[l,"\sim"'] \point  \ar[r,"\sim"]&  (H,\theta'') \ar[u,"\rotatebox{90}{$\sim$}"']
			]
	\end{tikzcd}
\]
\end{itemize}
\end{proof}

\begin{remark}
When the coproperad $\calC$ is concentrated in arities $(1,n)$, that is when it is a cooperad, we recover 
the seminal property \cite[Theorem~11.4.9]{LodayVallette12} of $\infty$-quasi-isomorphisms between homotopy algebras. Notice that the known proofs of this property on the level of operads cannot be reproduced on the properadic level since they rely on the rectification of homotopy algebras \cite[Section~11.4.3]{LodayVallette12}. 
\end{remark}

\cref{thm:MainInftyQi} is the key result to be able to study efficiently the formality property for dg gebras over properads with methods from deformation theory. It is used in a crucial way in the preprint \cite{Emprin24} to settle \emph{Kaleidin--Emprin classes} which are faithful obstructions to the formality for dg gebras over properads and in the recent preprint \cite{EmprinTakeda25} to establish the coformality of spheres over the rational numbers with respect to a bialgebraic structure close to pre-Calabi--Yau algebras. 

\begin{proposition}\label{prop:PropertiesInfMor}
Let $\calC$ be a coaugmented dg coproperad.
\begin{enumerate}
\item For any $\infty$-monomorphism 
$\begin{tikzcd}
f \colon (A,\alpha) \ar[r,hook,squiggly]  &(B, \beta)
\end{tikzcd}$, there exists an $\infty$-isotopy 
$g \colon (B, \beta) \stackrel{=}{\rightsquigarrow}  (B, \beta')$, such that their composite is a (strict) morphism of $\Cobar \calC$-gebras equal to $g\cc f =f_{(0)} : (A, \alpha) \mono (B, \beta')$~.

\item For any $\infty$-epimorphism 
$\begin{tikzcd}
f \colon (A,\alpha) \ar[r,two heads,squiggly]  &(B, \beta)
\end{tikzcd}$, there exists an $\infty$-isotopy 
$g \colon (A, \alpha') \stackrel{=}{\rightsquigarrow}  (A, \alpha)$, such that their composite is a (strict) morphism of $\Cobar \calC$-gebras equal to $f\cc g =f_{(0)} : (A, \alpha') \epi (B, \beta)$\ .
\end{enumerate}
\end{proposition}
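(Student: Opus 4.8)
My plan is to gauge away the higher components of $f$ by composing it with a well-chosen $\infty$-isotopy, built by induction along the coradical filtration of $\calC$ and exploiting that, over the field $\k$, a (co)restriction along $f_{(0)}$ is available. I will treat part (1) in detail; part (2) is strictly dual, with the roles of source/target and retraction/section interchanged.

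For part (1), since $f_{(0)}\colon A \mono B$ is injective and $\k$ is a field, I would first choose a degree $0$ retraction $r\colon B \to A$ with $r\circ f_{(0)}=\id_A$. Then I look for an $\infty$-isotopy $g\colon (B,\beta)\rightsquigarrow (B,\beta')$, with $g_{(0)}=\id_B$, making every higher component of $g\cc f$ vanish; this is exactly the statement that $g\cc f=f_{(0)}$ is strict. Expanding the composite along the decomposition map of $\calC$, for $c\in \oC(m,n)$ one gets schematically
\[(g\cc f)(c)=f(c)+g(c)\circ f_{(0)}^{\otimes n}+\sum g(c_{(1)})\circ f(c_{(2)})~,\]
where the last sum runs over the reduced part of $\Delta(c)$ and hence only involves the values of $g$ on elements of strictly smaller coradical weight than $c$. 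Imposing $(g\cc f)(c)=0$ turns this into $g(c)\circ f_{(0)}^{\otimes n}=-f(c)-\sum g(c_{(1)})\circ f(c_{(2)})$, whose right-hand side is already fixed by the inductive hypothesis; the assignment $g(c)\coloneqq -\big(f(c)+\sum g(c_{(1)})\circ f(c_{(2)})\big)\circ r^{\otimes n}$ then solves it, since $r^{\otimes n}\circ f_{(0)}^{\otimes n}=\id$. This produces a degree $0$ map $g\colon \calC \to \End_B$ with $g_{(0)}=\id_B$ and $g\cc f=f_{(0)}$.

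The hard part will be to verify that this $g$ is a genuine $\infty$-isotopy, i.e. that there is a $\Cobar\calC$-gebra structure $\beta'$ on $B$ for which $\partial(g)=g\rhd \beta-\beta'\lhd g$. I expect to handle this as follows. Because $g_{(0)}=\id_B$, the map $g$ is invertible for the composition product $\cc$ (a unipotent recursion along the same filtration), and the leading term of $\beta'\lhd g$ is $\beta'$ itself, so the equation $\partial(g)=g\rhd\beta-\beta'\lhd g$ can be solved component-wise for $\beta'$, defining it uniquely from $g$ and $\beta$ as the transport of $\beta$ along $g$; the $\infty$-morphism relation for $g$ then holds by construction. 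What remains, and is the genuine obstacle, is to show that this $\beta'$ is a Maurer--Cartan element, hence a bona fide $\Cobar\calC$-gebra structure. This is the expected statement that $\cc$-invertible maps with identity first component transport gebra structures to gebra structures: I would prove it by applying $\partial$ to the transport equation, using $\partial^2=0$ and that $\beta$ is a structure, isolating the curvature of $\beta'$ in the form $(\text{MC}(\beta'))\lhd g$, and cancelling $\lhd g$ by invertibility of $g$. With $\beta'$ in hand, $g\cc f$ is an $\infty$-morphism $(A,\alpha)\rightsquigarrow (B,\beta')$ equal to $f_{(0)}$ with vanishing higher components, so $f_{(0)}\colon (A,\alpha)\mono (B,\beta')$ is strict.

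For part (2), the surjection $f_{(0)}\colon A \epi B$ admits a section $s\colon B \to A$ with $f_{(0)}\circ s=\id_B$, and I would gauge on the source instead, seeking an $\infty$-isotopy $g\colon (A,\alpha')\rightsquigarrow (A,\alpha)$ with $g_{(0)}=\id_A$ that kills the higher components of $f\cc g$. The analogous expansion reads $(f\cc g)(c)=f(c)+f_{(0)}^{\otimes m}\circ g(c)+\sum f(c_{(1)})\circ g(c_{(2)})$, and imposing its vanishing gives $f_{(0)}^{\otimes m}\circ g(c)=-f(c)-\sum f(c_{(1)})\circ g(c_{(2)})$, solved by $g(c)\coloneqq s^{\otimes m}\circ\big(-f(c)-\sum f(c_{(1)})\circ g(c_{(2)})\big)$ using $f_{(0)}^{\otimes m}\circ s^{\otimes m}=\id$. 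The verification that $g$ is an $\infty$-isotopy, now with source structure $\alpha'$ the transport of $\alpha$ along $g^{-1}$, is identical to the one above, after which $f\cc g=f_{(0)}\colon (A,\alpha')\epi (B,\beta)$ is the desired strict morphism.
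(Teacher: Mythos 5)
Your construction is correct and, at bottom, follows the same strategy as the paper's proof, which simply invokes the operadic argument of \cite[Proposition~4.4]{Vallette14} and explains how to transport it to properads. The recursion you set up --- choose a retraction $r$ of $f_{(0)}$, kill the higher components of $g\cc f$ inductively along the coradical filtration, then define $\beta'$ by transporting $\beta$ along the unipotent $g$ --- is exactly the content of that proof; note that the induction needs the coradical filtration of $\calC$ to be exhaustive, i.e.\ conilpotency, which the statement's hypothesis ``coaugmented'' does not literally grant (the paper's cited proof needs it just as much, so this is not held against you). Where your route genuinely diverges is the step you rightly single out as the obstacle: verifying that the transported $\beta'$ is a Maurer--Cartan element. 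Done in components, as you propose, this requires the full properadic identities relating $\partial$, $\star$, $\lhd$ and $\rhd$ (the same bookkeeping that underlies \cref{lem:d2=0} and the higher operations of $\frakk_{\calC,B,B}$); your plan of isolating the curvature as $\kappa(\beta')\lhd g$ and cancelling $\lhd\, g$ by unipotence does come out right, but it is a delicate sign-and-term computation, and note that \cref{prop:MCInfty} cannot be quoted for it, since that statement presupposes that both $\beta$ and $\beta'$ are gebra structures. The paper sidesteps this entirely via the fourth characterisation of $\Omega\calC$-gebras in the properadic Rosetta stone: $g$ becomes an automorphism $G$ of the bifree monoidal $\S\calC$-comodule $\S\calC \sq \S B$, the transport equation reads $d_{\beta'}=G\,d_\beta\,G^{-1}$, and squaring to zero is then automatic by conjugation. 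I would close your sketched step that way rather than by hand; everything else, including the dual treatment of part (2) with a section in place of a retraction and the $\Sy$-equivariance of the recursively defined $g$, checks out.
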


\begin{proof}\leavevmode
The arguments given in \cite[Proposition~4.4]{Vallette14} in the operadic case hold \emph{mutatis mutandis} on the level of properads. The only seminal change lies in the use of the forth characterisation of $\Cobar \calC$-algebras in the operadic Rosetta stone \cite[Theorem~10.1.13]{LodayVallette12} given in terms of codifferentials of cofree $\calC$-coalgebras. 
It is replaced here by the forth characterisation of $\Cobar \calC$-gebras in the properadic Rosetta stone  
\cite[Theorem~3.10]{HLV19} given in terms of bidifferentials of bifree monoid $\S \calC$-comodules. So, it is enough to change the elements of the form $\mathcal{P}^{\ac}(A)$ in the proof \cite[Proposition~4.4]{Vallette14} by 
$\S\calC \sq \S A$\ . 
\end{proof}

One might wish to extend the results of \cite[Section~4]{Vallette14} which state that $\infty$-quasi-isomorphisms, $\infty$-monomorphisms, and $\infty$-epimorphisms form almost a model category structure on $\Cobar \calC$-algebras. First, the existence of (finite) products fail for gebras over a properad. The axioms MC2 (two out of three) and MC3 (retracts) are straightforward. Due to the lack of products, the proof given in \emph{loc. cit.} of the axiom MC5b (factorisation cofibration-acyclic fibration) cannot be reproduced in the properadic setting. 

\begin{proposition}\label{prop:MC4-5}
Let $\calC$ be a conilpotent dg  coproperad.
\begin{description}
\item[\sc (MC4) Lifting property] Any commutative square of $\infty$-morphisms 
\[
\begin{tikzcd}[row sep=large, column sep=huge]
(A,\alpha) \ar[d,hook, squiggly] \ar[r, squiggly] & (B, \beta)\ar[d,two heads]  \\
(C, \gamma) \ar[r, squiggly] \ar[ur, squiggly] & (D, \delta)
\end{tikzcd}
\]
where either the left or the right vertical arrow is an $\infty$-quasi-isomorphism admits a diagonal $\infty$-morphism which splits it into two commuting triangles. 

\item[\sc (MC5a) Factorisation acyclic cofibration-fibration] Any $\infty$-morphism factor into the composite of an $\infty$-monomorphism which is also an $\infty$-quasi-isomorphism followed by an $\infty$-epimorphism: 
\[
\begin{tikzcd}
(A,\alpha) \ar[r,hook,"\sim", squiggly]  \ar[rr, bend right=20, squiggly]&
(C, \gamma) \ar[r,squiggly, two heads]& 
(B, \beta)\ .
\end{tikzcd}
\]
\end{description}
\end{proposition}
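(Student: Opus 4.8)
The plan is to prove the two lifting/factorisation properties using the machinery already established, namely the characterisation of $\infty$-morphisms via the cofibrant $2$-colored dg properad $(\Cobar\calC)_{\bullet\rightsquigarrow\bullet}$ from \cref{prop:Res}, the model structure of \cref{prop:ModelCatColoreddgProperads}, together with the very good cylinder $\Cyl(\Cobar\calC)$ of \cref{prop:Cylinder} and the characterisations of $\infty$-monomorphisms and $\infty$-epimorphisms furnished by \cref{prop:PropertiesInfMor}. The key reduction is that an $\infty$-morphism of $\Cobar\calC$-gebras is precisely a morphism of $\Cobar\calC$-gebras over the cofibrant $2$-colored properad, so that properties of $\infty$-morphisms translate into statements about genuine (strict) morphisms in a model-categorical context where the underlying chain complexes are all fibrant and cofibrant over a field.

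\textbf{(MC4) Lifting property.} First I would reduce, using \cref{prop:PropertiesInfMor}, to the case where the relevant $\infty$-morphisms on the boundary of the square are as close to strict as possible: by precomposing or postcomposing with $\infty$-isotopies, one can arrange that the vertical $\infty$-monomorphism and $\infty$-epimorphism are literally strict maps $f_{(0)}$ on the nose, at the cost of modifying the gebra structures $\beta,\beta',\alpha,\alpha'$ on the same underlying spaces. Once the vertical arrows are strict morphisms and one of the two is an $\infty$-quasi-isomorphism (hence, on underlying complexes, an acyclic cofibration or acyclic fibration), the desired diagonal lift is produced by the lifting axiom of the model structure on $\dgVect$, applied component-by-component through the convolution description of $\infty$-morphisms, or equivalently by lifting against the map $(\Cobar\calC)_{\bullet\rightsquigarrow\bullet}\to\End$ encoding the square. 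The $\infty$-isotopies used in the reduction are then absorbed back by \cref{prop:InftyIsot} and the composition $\circledcirc$, yielding a genuine diagonal $\infty$-morphism splitting the square.

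\textbf{(MC5a) Factorisation.} For the acyclic cofibration--fibration factorisation of an arbitrary $\infty$-morphism $f\colon(A,\alpha)\rightsquigarrow(B,\beta)$, the strategy is to factor the underlying map $f_{(0)}\colon A\to B$ in $\dgVect$ as an acyclic monomorphism $A\hookrightarrow C$ followed by an epimorphism $C\twoheadrightarrow B$; since over a field every chain complex is cofibrant and fibrant, such a factorisation exists and the inclusion is a quasi-isomorphism. The transferred $\Cobar\calC$-gebra structure $\gamma$ on $C$ is then obtained via the homotopy transfer theorem \cite[Theorem~4.14]{HLV19} applied to the contraction of $C$ onto $B$ (or onto $A$), producing $\infty$-quasi-isomorphisms along the two legs; comparing first components and invoking the homological invertibility \cite[Theorem~4.18]{HLV19} exactly as in the proof of \cref{thm:MainInftyQi} lets one correct these up to $\infty$-isotopy so that the first leg is a genuine $\infty$-monomorphism which is an $\infty$-quasi-isomorphism and the second a genuine $\infty$-epimorphism, with composite $\infty$-isotopic, hence equal after absorption, to $f$.

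\textbf{Main obstacle.} The hard part will be (MC5a), and specifically the bookkeeping needed to ensure that the two factorising legs genuinely compose to $f$ rather than merely to something $\infty$-isotopic to it: unlike the operadic case treated in \cite{Vallette14}, the absence of finite products (as flagged in the paragraph preceding the statement) forbids the direct product-based construction of the intermediate gebra, so one must instead build $C$ through transfer and then reconcile the independently transferred structures using the $\infty$-isotopy rigidification of \cref{prop:PropertiesInfMor} and the zig-zag comparison of \cref{prop:InftyIsot}. Managing these corrections coherently—so that the modified structures on $C$ are compatible on both sides simultaneously—is the delicate point, and I expect it to require the same homological-invertibility correction argument that appeared in \cref{thm:MainInftyQi}, applied twice and then patched together.
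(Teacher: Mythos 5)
There is a genuine gap, most seriously in your treatment of (MC5a). The paper's proof imports the operadic argument of \cite[Theorem~4.2~(1)]{Vallette14} \emph{mutatis mutandis}, replacing the cofree coalgebra $\mathcal{P}^{\antish}(B)$ by the bifree comodule $\S\calC\sq\S B$ of \cite[Section~3.1]{HLV19} and the operadic rectification lemma by \cref{prop:PropertiesInfMor}; in that argument the intermediate gebra and \emph{both} legs of the factorisation are built simultaneously, by induction on the coradical filtration in the quasi-(co)free comodule picture, so that the composite equals $f$ by construction. Your proposal instead builds the two legs independently (factor $f_{(0)}$ in $\dgVect$, then transfer a structure onto $C$) and hopes to reconcile them afterwards. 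Two things break. First, the homotopy transfer theorem \cite[Theorem~4.14]{HLV19} transfers along a contraction, and there is no contraction of $C$ onto $B$: in the factorisation $A\hookrightarrow C\twoheadrightarrow B$ the second map is not a quasi-isomorphism unless $f_{(0)}$ is, so "the contraction of $C$ onto $B$" does not exist in general, and transferring "upwards" from $A$ to the larger complex $C$ is not what that theorem provides. Second, and more fundamentally, nothing in your construction controls the composite of the two legs beyond its first component; two $\infty$-morphisms with the same first component need not differ by an $\infty$-isotopy, so the final "absorption" step has no starting point. (You also misread the caveat preceding the statement: the absence of products is what kills MC5b, not an obstacle to MC5a that needs a workaround.)

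For (MC4), your first move—rectifying the vertical arrows to strict maps via \cref{prop:PropertiesInfMor} and inverting the resulting $\infty$-isotopies—is exactly the paper's (i.e.\ Vallette's) first move and is fine. But the remaining lifting problem is not solved by "the lifting axiom of the model structure on $\dgVect$ applied component-by-component": the chain-level lift only produces $h_{(0)}$, and the higher components of the diagonal are coupled to one another and to the gebra structures by the $\infty$-morphism equation $h\circledcirc i=f$, $p\circledcirc h=g$. They must be constructed by an inductive, obstruction-theoretic argument over the coradical filtration in the comodule picture, using the chain-level splittings of the strict mono/epi and the acyclicity supplied by the $\infty$-quasi-isomorphism hypothesis—this is precisely what the paper's substitutions of $\S\calC\sq\S B$ and of \cite[Section~5]{HLV19} are meant to carry over. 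Your alternative suggestion of lifting against $\left(\Cobar\calC\right)_{\bullet\rightsquigarrow\bullet}\to\End$ would require a cofibrant colored properad encoding the whole square together with an acyclic-cofibration comparison to the one encoding the square plus diagonal, none of which is set up in the paper.
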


\begin{proof}
The arguments given in the proof of \cite[Theorem~4.2 (1)]{Vallette14} apply \emph{mutatis mutandis} 
with \cref{prop:PropertiesInfMor} instead of \cite[Proposition~4.4]{Vallette14}, 
with \cite[Section~5]{HLV19}, which actually works for 
conilpotent dg  coproperads, instead of \cite[Appendix~A]{Vallette14}, and 
with the space $\S\calC \sq \S B$ of \cite[Section~3.1]{HLV19} instead of $\mathcal{P}^{\ac}(B)$.
\end{proof}

\begin{remark}
In the proof of the abovementioned factorisation axiom (MC5a), the $\infty$-mono\-morph\-ism\! 
\begin{tikzcd}
(A,\alpha) \ar[r,hook,"\sim"]  &
(C, \gamma) 
\end{tikzcd}\!\!
can be chosen to be strict. 
\end{remark}

\begin{proposition}\label{prop:PropertiesInfMorBIS}
When $\calC$ be a conilpotent dg  coproperad, any  $\infty$-morphism 
$f \colon (A,\alpha) \rightsquigarrow  (B, \beta)$ factorises into a composite of a monomorphism, an $\infty$-isotopy, and an epimorphism
\[
\begin{tikzcd}
(A,\alpha) \ar[r,hook,"\sim"]  \ar[rrr, bend right=15,"f"', squiggly]&
(C, \gamma) \ar[r,squiggly, "="]& 
(C, \gamma') \ar[r,two heads] & 
(B, \beta)\ ,
\end{tikzcd}
\]
where the monomorphism is a quasi-isomorphism.
\end{proposition}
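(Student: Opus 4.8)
The plan is to combine the factorisation axiom (MC5a) of \cref{prop:MC4-5} with the straightening of $\infty$-epimorphisms supplied by \cref{prop:PropertiesInfMor}~(2); the whole argument is a matter of assembling these two results in the right order, so I expect no genuine difficulty beyond careful bookkeeping of directions.

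First, I would apply (MC5a) to the given $\infty$-morphism $f$, writing it as the composite of an $\infty$-monomorphism which is also an $\infty$-quasi-isomorphism, followed by an $\infty$-epimorphism $p \colon (C,\gamma) \twoheadrightarrow (B,\beta)$. By the remark following \cref{prop:MC4-5}, the first factor $\iota \colon (A,\alpha) \rightsquigarrow (C,\gamma)$ may be chosen to be strict. Being simultaneously strict, an $\infty$-monomorphism and an $\infty$-quasi-isomorphism, this $\iota$ is nothing but a monomorphism of the underlying chain complexes which is a quasi-isomorphism, and it provides the leftmost arrow of the desired factorisation.

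Next, I would rectify the $\infty$-epimorphism $p$ by means of \cref{prop:PropertiesInfMor}~(2): it produces an $\infty$-isotopy $g \colon (C,\gamma') \stackrel{=}{\rightsquigarrow} (C,\gamma)$ such that the composite $p \cc g \colon (C,\gamma') \twoheadrightarrow (B,\beta)$ is a strict epimorphism. Since the first component of $g$ is the identity, $g$ is an $\infty$-isomorphism; its inverse $g^{-1} \colon (C,\gamma) \stackrel{=}{\rightsquigarrow} (C,\gamma')$ is again an $\infty$-isotopy, and one has $p = (p\cc g)\cc g^{-1}$.

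Assembling the three pieces and using the associativity of $\cc$, I obtain
\[
f \ = \ p \cc \iota \ = \ (p \cc g) \cc g^{-1} \cc \iota\ ,
\]
that is the factorisation
\[
\begin{tikzcd}
(A,\alpha) \ar[r,hook,"\sim","\iota"'] & (C,\gamma) \ar[r,squiggly,"="] & (C,\gamma') \ar[r,two heads,"p\cc g"'] & (B,\beta)\ ,
\end{tikzcd}
\]
consisting of a monomorphism which is a quasi-isomorphism, followed by an $\infty$-isotopy, followed by a (strict) epimorphism, exactly as claimed. The only points requiring care are the direction of the middle factor, handled by inverting the $\infty$-isotopy coming out of \cref{prop:PropertiesInfMor}~(2), and the fact that the leftmost arrow is at once strict and a quasi-isomorphism, which is precisely what the refinement of (MC5a) recorded in the remark guarantees.
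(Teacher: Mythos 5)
Your proof is correct and follows essentially the same route as the paper's: apply (MC5a) of \cref{prop:MC4-5} to get $f = q \circledcirc i$ with $i$ a strict monomorphism and quasi-isomorphism, rectify the $\infty$-epimorphism $q$ via Point~(2) of \cref{prop:PropertiesInfMor} to obtain an $\infty$-isotopy $g$ with $q\circledcirc g$ a strict epimorphism, and conclude $f = (q\circledcirc g)\circledcirc g^{-1}\circledcirc i$. The only difference is cosmetic bookkeeping of which direction the $\infty$-isotopy points, which you handle exactly as the paper does by inverting $g$.
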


\begin{proof}
We begin with the factorisation of the $\infty$-morphism $f$ into a monomorphism $i$, which is a quasi-isomorphism, followed by an $\infty$-epimorphism $q$~, given by (MC5a) of \cref{prop:MC4-5}.  Applying Point~(2) of \cref{prop:PropertiesInfMor} to $q$, there exists an $\infty$-isotopy 
\!\!\begin{tikzcd}
g \colon (C, \gamma')  \ar[r,squiggly, "="] & 
(C, \gamma)
\end{tikzcd}\!\!
such that the composite $p\coloneq q \circledcirc g$ is an epimorphism of $\Cobar \calC$-gebras. 
The final factorisation is given by 
\[
f= p \circledcirc g^{-1}\circledcirc i~.
\]
\end{proof}

\begin{corollary}
Let $\calC$ be a conilpotent dg  coproperad.
There exists an $\infty$-morphism between two  $\Cobar \calC$-gebra structures $(A,\alpha)$ and $(B, \beta)$  if and only if they are related by a zig-zag of (strict) morphisms of $\Cobar \calC$-gebras of the following form: 
\[
\begin{array}{c}
\exists\  \text{$\infty$-morphism} \\
\begin{tikzcd}[column sep=normal]
	(A,\alpha) \ar[r,squiggly] 
	& (B, \beta)
\end{tikzcd} 
\end{array}
\Longleftrightarrow
\begin{array}{c} 
\exists\  \text{zig-zag of morphisms} \\
\begin{tikzcd}
	(A, \alpha) \ar[r,hook, "\sim"] 
	& \point
	& \point \ar[l,"\sim"']\ar[r,"\sim"]
	& \point \ar[r,two heads] 
	& (B, \beta)
	\end{tikzcd}  \ ,
\end{array}
\]
where the monomorphism is a quasi-isomorphism.
\end{corollary}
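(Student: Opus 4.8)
The plan is to establish the two implications separately, in each case assembling the structural results proved above rather than reworking any computation from scratch.

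For the forward implication, I would start from an $\infty$-morphism $f \colon (A,\alpha) \rightsquigarrow (B,\beta)$ and apply \cref{prop:PropertiesInfMorBIS} to factor it as a monomorphism which is a quasi-isomorphism, an $\infty$-isotopy, and an epimorphism:
\[
\begin{tikzcd}
(A,\alpha) \ar[r,hook,"\sim"]  &
(C, \gamma) \ar[r,squiggly, "="]&
(C, \gamma') \ar[r,two heads] &
(B, \beta)\ .
\end{tikzcd}
\]
The middle $\infty$-isotopy relates two $\Cobar\calC$-gebra structures $\gamma$ and $\gamma'$ on the same underlying chain complex $C$, so \cref{prop:InftyIsot} replaces it by a two-arrows zig-zag of (strict) quasi-isomorphisms. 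Splicing this zig-zag into the factorisation produces exactly the required pattern
\[
\begin{tikzcd}
	(A, \alpha) \ar[r,hook, "\sim"]
	& \point
	& \point \ar[l,"\sim"']\ar[r,"\sim"]
	& \point \ar[r,two heads]
	& (B, \beta)\ ,
	\end{tikzcd}
\]
in which the leftmost monomorphism is a quasi-isomorphism.

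For the converse implication, I would observe that every strict morphism is in particular an $\infty$-morphism and that every strict quasi-isomorphism is an $\infty$-quasi-isomorphism (its first component being, by definition, a quasi-isomorphism). In the given zig-zag the first three arrows are therefore $\infty$-quasi-isomorphisms, while the terminal epimorphism is a possibly non-quasi-iso $\infty$-morphism. By the homological invertibility of $\infty$-quasi-isomorphisms \cite[Theorem~4.18]{HLV19}, the backward-pointing arrow $\point \xleftarrow{\sim} \point$ admits an inverse $\infty$-quasi-isomorphism oriented forward; all four maps may then be composed with $\circledcirc$ to yield a single $\infty$-morphism $(A,\alpha) \rightsquigarrow (B,\beta)$.

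Both directions are essentially immediate corollaries of the preceding propositions, so no genuinely hard step arises. The only point demanding mild care is in the converse: one must track the orientation of each arrow and check that inverting precisely the backward quasi-isomorphism — and \emph{not} the terminal epimorphism, which need not be a quasi-isomorphism — is what makes the four-fold composite well defined. Since $\circledcirc$ is associative and the three leading arrows are genuine $\infty$-quasi-isomorphisms, this bookkeeping is routine once \cite[Theorem~4.18]{HLV19} has been invoked.
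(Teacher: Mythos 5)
Your proof is correct and follows exactly the route the paper takes: the forward direction combines the factorisation of \cref{prop:PropertiesInfMorBIS} with the two-arrows zig-zag of \cref{prop:InftyIsot}, and the converse inverts the backward quasi-isomorphism via the homological invertibility of $\infty$-quasi-isomorphisms and composes. Your added bookkeeping about which arrows are genuine ($\infty$-)quasi-isomorphisms and which is merely an epimorphism is accurate and just makes explicit what the paper leaves implicit.
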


\begin{proof}\leavevmode
\begin{itemize}
\item[\sc ($\Longleftarrow$)]
Any ($\infty$-)quasi-isomorphism of $\Cobar \calC$-gebras admits a ``homological'' inverse by \cite[Theorem~4.18]{HLV19}, then the composite of all the $\infty$-morphisms gives the desired direct $\infty$-morphism. 

\item[\sc ($\Longrightarrow$)]
This is a direct corollary of \cref{prop:PropertiesInfMorBIS} and \cref{prop:InftyIsot}. 
\end{itemize}
\end{proof}

\section{Homotopy theory with $\Linfty$-algebras}
In this section, we start by recalling the algebraic properties of (shifted complete curved) homotopy Lie algebras and with their (continuous) $\infty$-morphisms. Then, we introduce a universal $\Linfty$-algebra whose Maurer--Cartan elements encode pairs of 
$\Cobar \calC$-gebras related by an $\infty$-morphism. This construction allows us to enrich the category of $\Cobar \calC$-gebras and their $\infty$-morphisms with $\Linfty$-algebras and curved $\infty$-morphisms. 

\subsection{Homotopy Lie algebras}
This section recalls the main properties of complete shifted homotopy Lie algebras that will be used later on. 
We refer the reader to \cite[Section~1.1]{RNV19} and to \cite[Chapter~4]{DSV18} for more details. 

\medskip

A \emph{complete differential graded (dg) vector space} $(\g, d, \scrF)$ is a chain complex, i.e. 
a graded space $\g$ equipped with a degree $-1$ square-zero differential $d$, 
 endowed with a degree-wise filtration 
\[\g_n= \scrF_0 \g_n \supset \scrF_1 \g_n \supset \scrF_2 \g_n \supset \cdots \supset \scrF_k \g_n \supset \scrF_{k+1}\g_n \supset \cdots\]
made up of vector subspaces preserved by the differential $d(\scrF_k \g_n)\subset \scrF_k \g_n$~, such that the associated topology is complete. 
A morphism of complete dg vector spaces is a chain map preserving the respective filtrations. 
It forms a bicomplete closed symmetric monoidal category when equipped with the complete tensor product defined by $\g\widehat{\otimes}\h\coloneqq \widehat{\g\otimes \h}$, where $\ \widehat{}\ $ stands for the completion functor. 
Its internal hom is denoted by $\hom$ and its complete filtration is defined by $$
\scrF_k \hom(\g, \h)\coloneqq\left\{
f :\g \to \h\ | \ f(\scrF_n \g)\subset \scrF_{n+k} \h \ , \ \forall n\in \NN\right\}\ .$$

\begin{definition}[Complete shifted  $\L_\infty$-algebra]
A \emph{complete shifted  $\L_\infty$-algebra structure} $\g$, on a complete differential graded vector space $(\g, d, \scrF)$ 
satisfying $\g_n= \scrF_1 \g_n$ for any $n\in \ZZ$, 
is a collection 
$(\mathcal{l}_2, \mathcal{l}_3, \ldots)\in \prod_{n\geqslant 2} \hom\left(\g^{{\odot} n}, \g\right)_{-1}$ of symmetric maps of degree $-1$, which respect the filtration and which satisfy 
\[
 \partial\left(\mathcal{l}_n\right)+\sum_{p+q=n}
\sum_{\sigma\in \mathrm{Sh}_{p,q}^{-1}}
 (\mathcal{l}_{p+1}\circ_{1} \mathcal{l}_q)^{\sigma}=0\ ,
\]
for any $n\geqslant 2$, 
where $ \mathrm{Sh}_{p,q}^{-1}$ denotes the set of the inverses of $(p,q)$-shuffles.
\end{definition}

\begin{remark}
A shifted $\L_\infty$-algebra structure on a dg module $\g$ is equivalent to an $\L_\infty$-algebra structure on its desuspension $s^{-1}\g$~. 
\end{remark}

\begin{definition}[Maurer--Cartan element]
A \emph{Maurer--Cartan element} is a degree $0$ element $\alpha\in  \g_0$ of a complete shifted $\L_\infty$-algebras satisfying the Maurer--Cartan equation:
\[
d(\alpha)+\sum_{n\geqslant 2} {\textstyle \frac{1}{n!}}\mathcal{l}_n(\alpha, \ldots, \alpha)=0 \ ;
\]
we denote the associated set by $\MC(\g)$\ .
\end{definition}

Notice that this infinite series makes sense since we assume here $\g_0= \scrF_1 \g_0$~, so 
$\mathcal{l}_n(\alpha, \ldots, \alpha)\in \scrF_n \g_0$~, for $n\geqslant 2$.  

\begin{remark}
By convention, we might view the differential as the first structure operation, that is $\mathcal{l}_1\coloneqq d$. This way both the definition of a complete shifted $\L_\infty$-algebra 
and the Maurer--Cartan equation  get more simple: 
\[
\sum_{p+q=n}
\sum_{\sigma\in \mathrm{Sh}_{p,q}^{-1}}
 (\mathcal{l}_{p+1}\circ_{1} \mathcal{l}_q)^{\sigma}=0
\quad \text{and} \quad \sum_{n\geqslant 1} {\textstyle \frac{1}{n!}}\mathcal{l}_n(\alpha, \ldots, \alpha)=0 \ .
\]
\end{remark}

\begin{proposition}[Twisting procedure]
Given any Maurer--Cartan element $\alpha$ of a complete shifted $\Linfty$-algebra $\g$, the following differential and operations 
\[
d^\alpha\coloneq \sum_{k\geqslant 0} {\textstyle \frac{1}{k!}} \mathcal{l}_{k+1}\big(\alpha^k, -\big) \qquad\text{and} \qquad 
\mathcal{l}_n^\alpha\coloneq \sum_{k\geqslant 0} {\textstyle \frac{1}{k!}} \mathcal{l}_{k+n}\big(\alpha^k, -,  \ldots,  -   \big)\ ,
\]
for $n\geqslant 2$, 
define another complete shifted $\Linfty$-algebra structure, whose Maurer--Cartan elements $\beta$ are in one-to-one correspondance 
with the Maurer--Cartan elements of the form $\alpha+\beta$ in $\g$~.
\end{proposition}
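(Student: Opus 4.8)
The plan is to encode the whole shifted $\Linfty$-structure as a single square-zero coderivation and to realize the twist as conjugation by a coalgebra automorphism, so that the twisted $\Linfty$-relations become automatic. I would work in the completed free symmetric algebra $\widehat{\mathrm{S}}(\g) = \prod_{n\geqslant 0}\g^{\odot n}$ built on $\g$ itself (no desuspension, since we use the shifted convention), viewed as a complete commutative Hopf algebra with the unshuffle coproduct $\Delta$, for which it is cofree as a counital cocommutative coalgebra on its primitives $\g$. Setting $\mathcal{l}_1\coloneqq d$, the symmetric collection $(\mathcal{l}_n)_{n\geqslant 1}$ assembles into a unique degree $-1$ coderivation $D$ whose corestriction is $q\coloneqq \pi\circ D=\sum_n \mathcal{l}_n$, reconstructed by $D(c)=\sum q(c_{(1)})\cdot c_{(2)}$, and the whole family of relations is equivalent to the single equation $D^2=0$. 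The hypothesis $\g_n=\scrF_1\g_n$ together with completeness guarantees that $e^\alpha\coloneqq \sum_{k\geqslant 0}\frac{1}{k!}\alpha^{\odot k}$ converges to a degree $0$ grouplike element and that $q$ is defined on such series.

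Next I would introduce the translation $\Theta_\alpha(c)\coloneqq e^\alpha\cdot c$. Since $e^\alpha$ is grouplike and $\Delta$ is an algebra morphism, $\Theta_\alpha$ is a filtration-preserving coalgebra automorphism of degree $0$ with inverse $\Theta_{-\alpha}$, and I set $D^\alpha\coloneqq \Theta_{-\alpha}\circ D\circ\Theta_\alpha$. As the conjugate of a coderivation by a coalgebra automorphism, $D^\alpha$ is again a degree $-1$ coderivation, and $(D^\alpha)^2=\Theta_{-\alpha}D^2\Theta_\alpha=0$; this is exactly what makes the twisted $\Linfty$-relations hold with no further computation. It then remains to identify the corestriction of $D^\alpha$ with the stated operations. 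Using $\Delta(e^\alpha\, x_1\cdots x_n)=(e^\alpha\otimes e^\alpha)\,\Delta(x_1\cdots x_n)$, the only terms surviving the projection $\pi\circ\Theta_{-\alpha}$ to $\g$ are those whose second tensor factor is exactly $e^\alpha$, which yields
\[
\pi D^\alpha(x_1\odot\cdots\odot x_n)=q\big(e^\alpha\odot x_1\odot\cdots\odot x_n\big)=\sum_{k\geqslant 0}\tfrac{1}{k!}\,\mathcal{l}_{n+k}\big(\alpha^k, x_1,\ldots,x_n\big),
\]
that is, $d^\alpha$ for $n=1$ and $\mathcal{l}_n^\alpha$ for $n\geqslant 2$. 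Because $\alpha$ has degree $0$ there are no Koszul signs to track, and the factorials appear automatically from $e^\alpha$.

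I would then check that $D^\alpha$ carries no curvature, which is where the Maurer--Cartan hypothesis enters: from $\Delta(e^\alpha)=e^\alpha\otimes e^\alpha$ one gets $D(e^\alpha)=q(e^\alpha)\cdot e^\alpha$, whence $D^\alpha(1)=\Theta_{-\alpha}D(e^\alpha)=q(e^\alpha)$, where $q(e^\alpha)=\sum_{n\geqslant 1}\frac{1}{n!}\mathcal{l}_n(\alpha^n)$ is the left-hand side of the Maurer--Cartan equation at $\alpha$. Thus $D^\alpha(1)=0$ precisely because $\alpha$ is Maurer--Cartan, so $D^\alpha$ defines a genuine (uncurved) complete shifted $\Linfty$-structure, which I denote $\g^\alpha$. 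Finally, the bijection on Maurer--Cartan sets is immediate in this language: for degree $0$ elements $\beta$ one has $\Theta_\alpha(e^\beta)=e^\alpha\cdot e^\beta=e^{\alpha+\beta}$ (as $\alpha,\beta$ commute), so $D^\alpha(e^\beta)=\Theta_{-\alpha}D(e^{\alpha+\beta})=q(e^{\alpha+\beta})\cdot e^\beta$, whose $\g$-component is $q(e^{\alpha+\beta})$. Hence $\beta\in\MC(\g^\alpha)$ if and only if $q(e^{\alpha+\beta})=0$, i.e.\ $\alpha+\beta\in\MC(\g)$, and $\beta\mapsto\alpha+\beta$ is the desired bijection.

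The hard part will not be the conjugation identity, which is formal, but the analytic bookkeeping in the complete setting: one must verify that $e^\alpha$, $\Theta_\alpha$, $D^\alpha$, and all the series above are well defined and filtration-preserving on $\widehat{\mathrm{S}}(\g)$, which rests entirely on completeness and the condition $\g=\scrF_1\g$ (so that $\mathcal{l}_{n+k}(\alpha^k,\ldots)$ sinks into deep filtration layers as $k\to\infty$). One must also confirm the reconstruction formula for coderivations of the completed cofree cocommutative coalgebra in the graded counital setting, with the sign and degree conventions of \cite{DSV18} and \cite{RNV19}; alternatively this last point can be quoted directly from \emph{loc.\ cit.}, in which case the proof reduces to the three displayed computations above.
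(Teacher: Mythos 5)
Your argument is correct, but it is worth noting that the paper does not actually carry out a proof here: it asserts that the statement follows from ``a straightforward computation'' and refers to \cite[Section~4.4]{DSV18} for ``a more conceptual explanation''. What you have written is essentially that conceptual explanation, executed in full: you encode the shifted $\Linfty$-structure as a square-zero degree $-1$ coderivation $D$ on the completed counital symmetric coalgebra $\widehat{\mathrm{S}}(\g)$, realise the twist as conjugation by the coalgebra automorphism $\Theta_\alpha(c)=e^\alpha\cdot c$, and read off the twisted operations, the vanishing of the curvature (this is exactly where the Maurer--Cartan hypothesis on $\alpha$ enters, via $D^\alpha(1)=q(e^\alpha)$), and the bijection $\beta\mapsto\alpha+\beta$ from $D^\alpha(e^\beta)=q(e^{\alpha+\beta})\cdot e^\beta$. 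The direct computation the paper alludes to instead expands $\sum_{p+q=n}\sum_\sigma(\mathcal{l}^\alpha_{p+1}\circ_1\mathcal{l}^\alpha_q)^\sigma$ and regroups the $\alpha$-insertions using the original relations and the Maurer--Cartan equation; your route buys the twisted relations for free from $(D^\alpha)^2=\Theta_{-\alpha}D^2\Theta_\alpha=0$ at the cost of two pieces of infrastructure that you correctly flag and should either prove or quote: the reconstruction of coderivations of the completed counital cofree cocommutative coalgebra from their corestriction (so that $\pi\circ D^\alpha$ determines $D^\alpha$ and square-zero is equivalent to the $\Linfty$-relations), and the convergence and filtration-preservation of all the series, which indeed rests on completeness together with $\g=\scrF_1\g$ so that $\mathcal{l}_{k+n}(\alpha^k,-,\ldots,-)$ lands in $\scrF_{k+i_1+\cdots+i_n}\g$. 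Your computations of $\pi D^\alpha$, of $D(e^\alpha)=q(e^\alpha)\cdot e^\alpha$, and of the Maurer--Cartan correspondence are all correct, and since $\alpha$ has degree $0$ there are genuinely no signs to track, so I see no gap.
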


\begin{proof}
This can be proved by a straightforward computation. We refer to \cite[Section~4.4]{DSV18} for a more conceptual explanation. 
\end{proof}

This new structure is called the complete shifted $\Linfty$-algebra \emph{twisted} by the Maurer--Cartan element $\alpha$ and it is denoted by $\g^\alpha$. 

\subsection{Convolution homotopy Lie algebra}
Let $\calC$ be a coaugmented dg coproperad and let $A$ and $B$ be two dg vector spaces.
The goal of this section is to lift the dg Lie algebra structures on the convolution Lie-admissible algebras $\g_{\calC, A}$ and 
$\g_{\calC, B}$, recalled in \cref{subsec:PropCal}, to a shifted $\L_\infty$-algebra structure
on the dg vector space
\[s\Hom_{\Sy}\left(\oC, \End_A\right)\oplus \Hom_{\Sy}\left(\calC, \End^A_B\right) \oplus s\Hom_{\Sy}\left(\oC, \End_B\right)~, \]
so that it encodes $\infty$-morphisms of $\Omega \calC$-gebras. 
Let us first recall that the direct sum of two dg Lie algebras, with trivial bracket for mixed terms, is the categorical product of dg Lie algebras. Then, the suspension of a dg Lie algebra produces a shifted Lie algebra, that is a shifted $\L_\infty$-algebra where the structure operations of arity greater or equal to $3$ vanish. 
Here the shifted Lie bracket on $s\Hom_{\Sy}\left(\oC, \End_A\right)$  is explicitly given by
\[[s f, s g]_A=(-1)^{|f|}s (f\star g) - (-1)^{|f|(|g|+1)+1} s(g \star f) ~. \]

It remains to deal with the middle term. 
Given $n$ elements $f_1, \ldots, f_n$ of a graded vector space, we denote by 
\[
	f_1\odot \cdots \odot f_n \coloneqq
	\sum_{\sigma\in \Sy_n} \varepsilon_\sigma f_{\sigma(1)}\otimes \cdots \otimes f_{\sigma(n)}
\]
their symmetrization, where $\varepsilon_\sigma$ is the Koszul sign coming from the permutation of the graded terms. 
We consider the following multilinear operations acting on the total graded vector space:
\begin{eqnarray*}
&&\UU_{n+1}(s\beta, f_1, \ldots, f_n) 
	\ : \ 
	\calC \xrightarrow{\Cop{}{(n)}}  \oC \lhd_{(n)} \calC
	\xrightarrow{\beta\lhd_{(n)}(f_1\odot \cdots \odot f_n)} 
	\End_B \lhd_{(n)} \End^A_B
	\to 
	\End^A_B\ , \\
&&\DD_{n+1}(f_1, \ldots, f_n, s\alpha) 
	\ : \ 
	\calC \xrightarrow{\Cop{(n)}{}}  \calC\tensor*[_{(n)}]{\rhd}{} \oC
	\xrightarrow{(f_1\odot \cdots \odot f_n)\tensor*[_{(n)}]{\rhd}{} \alpha}
	\End^A_B  \tensor*[_{(n)}]{\rhd}{} \End_A
	\to 
	 \End^A_B\ , 
\end{eqnarray*}
for $\alpha\in \Hom_{\Sy}\left(\oC, \End_A\right)$, $\beta \in \Hom_{\Sy}\left(\oC, \End_B\right)$, and 
$f_1, \ldots, f_n\in \Hom_{\Sy}\left(\calC, \End^A_B\right)$, with $n\geqslant 1$~. 
We extend their definitions by symmetry: 
\begin{eqnarray*}
&&\UU_{n+1}(f_1, \ldots, f_k, s\beta, f_{k+1}, \ldots, f_n)\coloneqq 
(-1)^{(|\beta|+1)(|f_1|+\cdots +|f_k|)}\, \UU_{n+1}(s\beta, f_1, \ldots, f_k, f_{k+1}, \ldots, f_n)~, \\
&&\DD_{n+1}(f_1, \ldots, f_k, s\alpha, f_{k+1}, \ldots, f_n)\coloneqq 
(-1)^{(|\alpha|+1)(|f_{k+1}|+\cdots +|f_n|)}\, \DD_{n+1}(f_1, \ldots, f_k, f_{k+1}, \ldots, f_n, s\alpha)~.
\end{eqnarray*}
In the end, we define the structure operations by
\[	
	\left\{
	\begin{array}{ll}
		\mathcal{k}_2 \coloneqq [\,\, ,\,]_A + [\,\, ,\,]_B+\DD_2-\UU_2\ ,
		& \text{for}\ n=2\ ,
		\\
		\mathcal{k}_n\coloneqq \DD_n-\UU_n\ , 
		& \text{for}\  n\geqslant 3\ .
	\end{array}
	\right.
\]

\begin{theorem}\label{thm:ShiftedLinfini}
Let $\calC$ be a coaugmented dg coproperad and let $A$ and $B$ be two dg vector spaces. 
\begin{enumerate}
\item 
The data \[
	\mathfrak{k}_{\calC, A, B}\coloneq \left( 
	s\Hom_{\Sy}\left(\oC, \End_A\right)\oplus \Hom_{\Sy}\left(\calC, \End^A_B\right) \oplus s\Hom_{\Sy}\left(\oC, \End_B\right), -\partial, \{\mathcal{k}_n\}_{n\geqslant 2}\right)
\]
forms a shifted $\L_\infty$-algebra structure.
\item This assignment is functorial as follows: given a morphism of coaugmented dg coproperads 
$G : \calD \to \calC$, the associated pullback morphism of graded vector spaces 
\begin{multline*}
G^* : 
s\Hom_{\Sy}\left(\oC, \End_A\right)\oplus \Hom_{\Sy}\left(\calC, \End^A_B\right) \oplus s\Hom_{\Sy}\left(\oC, \End_B\right)
\to  \\ 
s\Hom_{\Sy}\left(\oD, \End_A\right)\oplus \Hom_{\Sy}\left(\calD, \End^A_B\right) \oplus s\Hom_{\Sy}\left(\oD, \End_B\right)
\end{multline*}
 is a strict morphism of  shifted $\L_\infty$-algebras. 
\end{enumerate}
\end{theorem}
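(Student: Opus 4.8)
The plan is to read the three structure operations off the universal differential of \cref{prop:Res} and to reduce every $\L_\infty$-relation to the coassociativity of the decomposition coproduct of $\calC$, just as in \cref{lem:d2=0}. Indeed, a Maurer--Cartan element $(s\alpha,f,s\beta)$ of $\mathfrak{k}_{\calC,A,B}$ should be exactly a pair of $\Cobar\calC$-gebra structures $\alpha$ on $A$ and $\beta$ on $B$ together with an $\infty$-morphism $f$ between them, and the brackets $\DD_n-\UU_n$ are nothing but the arity components of the two ``action'' terms $\Cop{(*)}{}$ and $\Cop{}{(*)}$ of that differential, with $\alpha,\beta$ now genuine inputs rather than the future images of $s^{-1}\oC_0,s^{-1}\oC_1$. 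I would first dispose of the two outer summands: the convolution products on $\g_{\calC,A}$ and $\g_{\calC,B}$ are Lie-admissible, so their commutators are Lie brackets, and after suspension $s\Hom_\Sy(\oC,\End_A)$ and $s\Hom_\Sy(\oC,\End_B)$ become shifted dg Lie algebras whose direct sum (with vanishing cross bracket) is their product. Hence the $\L_\infty$-relations carried by the outer terms alone are already the Jacobi identities for $[\,,\,]_A$ and $[\,,\,]_B$.

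The heart of the matter is the interaction with the middle summand. The key structural observation is that every $\mathcal{k}_n$, $n\geq 2$, contains exactly one desuspended outer input ($s\alpha$ or $s\beta$) together with $n-1$ copies of $f$, so no bracket ever pairs two middle elements and the relations organise into sectors according to how many outer inputs they involve. I would verify them by expanding the composites $\mathcal{k}_{p+1}\circ_1\mathcal{k}_q$ through the components $\Cop{(n)}{}$ and $\Cop{}{(n)}$ of $\Delta$, finding three families of cancellations, each the $\L_\infty$-shadow of a cancellation already carried out in \cref{lem:d2=0}: (i) the relations with a single outer input reduce to the compatibility of each $\DD_n$ and $\UU_n$ with the differentials $-\partial$ and $d_\calC$, i.e.\ the chain-map cancellation of the first half of that proof; (ii) the relations with two outer inputs of the same colour express that $\{\DD_n\}$ (resp.\ $\{\UU_n\}$) is an action of the shifted Lie algebra $(s\g_{\calC,A},[\,,\,]_A)$ (resp.\ $(s\g_{\calC,B},[\,,\,]_B)$), and follow from coassociativity identifying the two iterated applications of $\Cop{(n)}{}$ (resp.\ $\Cop{}{(n)}$) with the single splitting governed by the outer bracket; and (iii) the relations with one $\alpha$ and one $\beta$ express that the $\DD$- and $\UU$-actions commute, which is precisely the commutation of the left and right infinitesimal decomposition maps $\Cop{(n)}{}$ and $\Cop{}{(m)}$ established at the very end of the proof of \cref{lem:d2=0}.

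The only genuine obstacle, exactly as in \cref{lem:d2=0}, is the sign bookkeeping: one must keep track of the Koszul signs produced by the symmetrisations $f_1\odot\cdots\odot f_n$, by the suspensions of $\alpha$ and $\beta$, and by the order in which two factors are desuspended when they are split off by an iterated decomposition. The extension-by-symmetry formulas prescribed for $\UU_{n+1}$ and $\DD_{n+1}$ are designed precisely to make these signs consistent, and I expect the verification to collapse, after a reindexing of the shuffles $\mathrm{Sh}_{p,q}^{-1}$, onto the sign identities already checked in \emph{loc.\ cit.}

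For the functoriality statement (2), since $G\colon\calD\to\calC$ is a morphism of coaugmented dg coproperads it commutes with $\Delta$ and with the (co)augmentation, hence with every component $\Cop{(n)}{}$, $\Cop{}{(n)}$ and $\Delta_{(1,1)}$, and it maps $\oD$ into $\oC$. As each operation $[\,,\,]_A$, $[\,,\,]_B$, $\DD_n$, $\UU_n$ is assembled from these decomposition maps postcomposed with the fixed composition maps of $\End_A$, $\End_B$ and $\End^A_B$, precomposition with $G$ intertwines them on the nose and commutes with the differentials. Thus $G^*$ is a \emph{strict} morphism of shifted $\L_\infty$-algebras, with no higher components to produce.
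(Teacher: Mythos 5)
Your proposal is correct, but it takes a genuinely more computational route than the paper. The paper's own proof is essentially an appeal to general machinery: \cref{lem:d2=0} shows that the differential of the quasi-free $2$-coloured properad $\left(\Cobar\calC\right)_{\bullet\rightsquigarrow\bullet}$ squares to zero, so its generating space $\oC_0\oplus s\calC^0_1\oplus\oC_1$ forms a $2$-coloured \emph{homotopy coproperad}; the convolution theorem of \cite[Theorem~28]{MerkulovVallette09I} then immediately yields an $\L_\infty$-algebra structure on $\Hom_{\Sy}\left(\oC_0\oplus s\calC^0_1\oplus\oC_1,\End_{A\oplus B}\right)$, and it only remains to match the induced operations with the $\mathcal{k}_n$, which the paper declares straightforward. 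Your sector-by-sector expansion of the $\L_\infty$-relations is exactly what that general theorem packages: your observation that a composite $\mathcal{k}_{p+1}\circ_1\mathcal{k}_q$ is non-zero only when it sees two or three outer inputs is what makes the sectors decouple, and your matching of each sector (pure outer inputs to Jacobi, one outer input to chain-map compatibility, two outer inputs of one colour to the action relation, one of each colour to the commutation of the left and right actions) with the corresponding cancellation in \cref{lem:d2=0} is accurate. What the paper's route buys is that the sign bookkeeping you defer to the end is done once and for all in \cref{lem:d2=0} and in \emph{loc.\ cit.}; what your route buys is independence from the notion of a homotopy coproperad and an explicit view of which instance of coassociativity each relation uses. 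Part (2) is argued identically in both. The only imprecision worth flagging is that, for two outer inputs of the same colour, the relation expresses a homotopy ($\L_\infty$-)action rather than a strict Lie-algebra action, since the middle inputs are distributed over both factors of $\DD_{p+1}\circ_1\DD_q$; this does not affect the reduction to coassociativity.
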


\begin{proof}\leavevmode
\begin{enumerate}
\item 
This is a corollary of \cref{lem:d2=0} using the general method of \cite[Theorem~28]{MerkulovVallette09I} as follows. The generating space 
$\oC_0 \oplus s\calC^0_1 \oplus \oC_1$ forms a 2-colored homotopy coproperad. So, the convolution space 
\[\Hom_{\Sy}\left(\oC_0 \oplus s\calC^0_1 \oplus \oC_1, \End_{A\oplus B}\right)
\cong 
\Hom_{\Sy}\!\left(\oC, \End_A\right) \oplus 
s^{-1}\Hom_{\Sy}\!\left(\calC, \End^A_B\right) 
\oplus 
\Hom_{\Sy}\!\left(\oC, \End_B\right) 
\]
 forms an $\L_\infty$-algebra. It remains to check that the formulas given in \cref{subsec:ResColProp} for this 
 2-colored homotopy coproperad structure on $\oC_0  \oplus s\calC^0_1\oplus \oC_1$ induce the abovementioned structure operations $\left\{\mathcal{k}_n\right\}_{n\geqslant 2}$~, which is straightforward. 

\item It is straightforward to see that $G^*$ commutes with the respective structure operations, since $G$ is a morphism of dg coaugmented coproperads and since the structure operations are defined with the respective decomposition maps. 
\end{enumerate}
\end{proof}

\begin{remark}
When the coproperad $\calC$ is concentrated in arities $(1,n)$, that is when it is a cooperad, 
the shifted $\L_\infty$-algebra $\mathfrak{k}_{\calC, A, B}$ coincides with the one given in 
\cite[Section~3.1]{DW14}. 
\end{remark}

It remains to define a complete filtration compatible with these structural operations. To this extend, we restrict ourselves to conilpotent dg coproperads $\calC$ introduced in \cref{def:dgconil}. Recall that we defined in \cref{def:weightcoradical} the weight of directed connected graph to be its number of vertices and that we considered there the induced coradical filtration on $\oC$ based on the weight. 

\begin{definition}[Size of a directed connected graph]
The \emph{size} a directed connected graph is equal to 
the maximal number of edges crossing any horizontal level. 
\end{definition}

\begin{exam}
The following directed graph has weight $4$ 
and size $5$. 
\[
\begin{tikzpicture}[scale=0.9,baseline=(n.base)]
\node (n) at (1,1.5) {}; 
\draw[dashed, very thin] (-3,3.5) -- (2,3.5);
\draw[dashed, very thin] (-3,2.5) -- (2,2.5);
\draw[dashed, very thin] (-3,1.5) -- (2,1.5);
\draw[dashed, very thin] (-3,0.5) -- (2,0.5);
\draw[dashed, very thin] (-3,-0.5) -- (2,-0.5);
\draw[fill=black] (0,0) circle (3pt); 
\draw[fill=black] (1,1) circle (3pt); 
\draw[fill=black] (-1,2) circle (3pt); 
\draw[fill=black] (0,3) circle (3pt); 
\draw[thick] (0,0) to[out=60,in=270] (1,1) to[out=120,in=280] (0,3) to[out=320,in=90] (1,1) ;
\draw[thick] (0,0) to[out=120,in=270] (-1,2) to[out=40,in=240] (0,3);
\draw[thick] (-1,2) to[out=60,in=270] (-1,4) node {};
\draw[thick] (-1,2) to[out=120,in=270] (-2,4) node {};
\draw[thick] (-1,2) to[out=250,in=90] (-1.5,-1) node {};
\draw[thick] (0,3) to[out=60,in=270] (1,4) node {};
\draw[thick] (0,3) to[out=120,in=270] (-0.5,4) node {};
\draw[thick] (0,0) to[out=250,in=90]  (-0.5,-1) node {};
\draw[thick] (0,0) to[out=290,in=90]  (0.5,-1) node {};
\end{tikzpicture}
\]
\end{exam}  

\begin{definition}[Density filtration]\label{def:DensOp}
Let $\calC$ be a conilpotent dg coproperad. 
 The \emph{density filtration} of a conilpotent dg coproperad $\calC$  is defined as follows :  an element $c \in \oC$ 
lives in $\scrD_k \calC$ if all the non-trivial components of $\widetilde{\Delta}(c)$ are supported by graphs whose sum of weight and size is less or equal to $k$. The identity element $\id$ belongs to  $\scrD_1 \calC$~. 
\end{definition}

 By definition, this filtration is increasing and exhaustive 
\begin{align*}
&0=\mathscr{D}_{0} \calC\subset 
\mathscr{D}_1 \calC\subset \mathscr{D}_2 \calC\subset \cdots \subset 
\bigcup_{k\geqslant 1} \mathscr{D}_k \calC =\calC 
~, 
\end{align*}
since the image $\widetilde{\Delta}(c)$ of any element $c\in \oC$ under the comonadic decomposition map is made up of a finite sum of labelled graphs. 
Since the comonadic decomposition map starts with the element $c$ itself labelling a one-vertex graph, i.e. 
$\widetilde{\Delta}(c)=c+\cdots$, 
the first non-trivial part of the coradical filtration is the space of primitive elements:
\[\scrR_1 \oC=\left\{c\in \oC \ |\ \widetilde{\Delta}(c)=c \right\}~.\]
Considering the top and the bottom levels of leaves of an element $c\in \calC(m,n)$, one sees that it cannot live in 
$\scrD_{\mathrm{max}(m,n)-1}$~. 

\begin{remark}
We could have equally well considered the filtration on conilpotent dg coproperads defined by the total number of leaves and internal edges of the directed connected graphs appearing in the image of the comonadic decomposition map $\widetilde{\Delta}$~. 
\end{remark}

\begin{definition}[Canonical filtration]
For any conilpotent dg coproperad $\calC$ and any dg vector spaces $A$ and $B$, 
the \emph{canonical filtration} of the graded vector space 
\[s\Hom_{\Sy}\left(\oC, \End_A\right)\oplus \Hom_{\Sy}\left(\calC, \End^A_B\right) \oplus s\Hom_{\Sy}\left(\oC, \End_B\right)~, \]
 is defined by 
\[	
	\left\{
	\begin{array}{ll}
s\alpha \in \scrF_k s\Hom_{\Sy}\left(\oC, \End_A\right) &  \text{if}\  \ \alpha|_{\mathscr{R}_{k-1} \oC}= 0~,
		\\
s\beta \in \scrF_k s\Hom_{\Sy}\left(\oC, \End_B\right) & \text{if}\ \ \beta|_{\mathscr{R}_{k-1} \oC}= 0~,
		\\
f \in \scrF_k \Hom_{\Sy}\left(\calC, \End^A_B\right) & \text{if}\ \ f|_{\mathscr{D}_{k-1} \calC}= 0~,
	\end{array}
	\right.
\]
\end{definition}

\begin{proposition}\label{prop:CompHoLieAlgeStructure}
Let $\calC$ be a conilpotent dg coproperad and let $A$ and $B$ be two dg vector spaces. 
The convolution $\L_\infty$-algebra  
\[
	\mathfrak{k}_{\calC, A, B}\coloneq \left( 
	s\Hom_{\Sy}\left(\oC, \End_A\right)\oplus \Hom_{\Sy}\left(\calC, \End^A_B\right) \oplus s\Hom_{\Sy}\left(\oC, \End_B\right), -\partial, \{\mathcal{k}_n\}_{n\geqslant 2}, \left\{\scrF_k\right\}_{k\geqslant 1}
	\right)
\]
is complete with respect to the canonical filtration. This assignment is functorial with respect to morphisms of dg conilpotent coproperads. 
\end{proposition}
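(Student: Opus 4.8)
The plan is to establish two things: first, that the canonical filtration $\{\scrF_k\}_{k\geqslant 1}$ is \emph{complete} and compatible with the $\L_\infty$-structure operations $\{\mathcal{k}_n\}$, and second, that the whole assignment is functorial. The completeness of the underlying filtered graded vector space follows readily from the fact that the density filtration $\scrD_k$ and the coradical filtration $\scrR_k$ are both exhaustive and increasing (as recorded right after \cref{def:DensOp} and \cref{def:weightcoradical}), so that the induced decreasing filtrations on the three $\Hom$-spaces are complete in the sense required: a map vanishing on $\scrR_{k-1}\oC$ (resp. $\scrD_{k-1}\calC$) for all $k$ must be zero, giving separatedness, while the completeness of the associated topology comes from the product structure of $\Hom_{\Sy}$ over all arities $(m,n)$. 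I would make this explicit by noting that $\scrF_k s\Hom_{\Sy}(\oC,\End_A) = s\,\Hom_{\Sy}(\oC/\scrR_{k-1}\oC, \End_A)$ identifies $\scrF_k$ as the kernel of restriction, so that $\g \cong \varprojlim_k \g/\scrF_k \g$.

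The heart of the proof is checking that the structure operations respect the filtration, i.e. that each $\mathcal{k}_n$ sends $\scrF_{k_1}\times\cdots\times\scrF_{k_n}$ into $\scrF_{k_1+\cdots+k_n}$ (or the appropriate shift thereof). I would handle the three types of summands in $\mathcal{k}_n = \DD_n - \UU_n$ by tracking how the decomposition maps $\Cop{(n)}{}$ and $\Cop{}{(n)}$ interact with the filtration degrees. The key mechanism is that the comonadic decomposition $\widetilde{\Delta}$ splits an element of density $\scrD_k$ into graphs whose total density is controlled: when one forms $\DD_{n+1}(f_1,\dots,f_n,s\alpha)$ by feeding $n$ maps $f_i$ and one $\alpha$ into a two-level graph via $\Cop{(n)}{}$, the resulting composite vanishes on any element whose image under $\Cop{(n)}{}$ lands entirely in a low-density region. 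Here the subtlety is precisely \emph{why the density filtration, rather than the naive coradical filtration, is needed}: the $\UU$ and $\DD$ operations saturate a whole horizontal level with the maps $f_i$, so the relevant combinatorial bound involves not only the number of vertices (weight) but also the number of edges crossing a level (size). This is exactly the content flagged in the \textbf{Discourse on the method} paragraph, where the authors emphasise that ``new types of filtrations on graphs, like the density filtration'' are required for convergence.

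The main obstacle I anticipate is the precise bookkeeping of the filtration shift for the operations involving the middle term $\Hom_{\Sy}(\calC, \End^A_B)$, since an element $f$ is measured against $\scrD_{k-1}\calC$ (density) whereas $s\alpha$ and $s\beta$ are measured against $\scrR_{k-1}\oC$ (coradical). One must verify that the mismatch between these two gradings is compatible under $\Cop{(n)}{}$: splitting a coproperad element of density $d$ into a bottom vertex (labelled by $\alpha$, living in $\oC$) and a top layer of $n$ vertices (labelled by the $f_i$'s, collectively living in $\calC^{\odd n}$) must respect the additivity $d \geqslant (\text{coradical degree of bottom}) + \sum(\text{density degrees of tops})$. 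I would derive this additivity from the observation, made just before the \textbf{Remark} following \cref{def:DensOp}, that an element of $\calC(m,n)$ cannot live in $\scrD_{\max(m,n)-1}$, together with the coassociativity of $\Delta$ which was already exploited in the proof of \cref{lem:d2=0}. Establishing this single inequality is what guarantees that the infinite sums defining the twisted operations and the Maurer--Cartan series converge, and thus that $\mathfrak{k}_{\calC,A,B}$ is genuinely complete.

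Functoriality is then immediate and inherited from the second part of \cref{thm:ShiftedLinfini}: a morphism $G\colon \calD\to\calC$ of conilpotent dg coproperads preserves both the coradical and the density filtrations (since it commutes with $\widetilde{\Delta}$ and cannot increase the weight or size of the supporting graphs), so the induced pullback $G^*$ maps $\scrF_k$-layers into $\scrF_k$-layers and hence is a morphism of \emph{complete} shifted $\L_\infty$-algebras. I would close the argument by invoking the strictness already proved in \cref{thm:ShiftedLinfini}(2), upgraded to the filtered setting by this filtration-compatibility of $G^*$.
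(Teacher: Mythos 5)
Your overall architecture matches the paper's proof: completeness of the underlying filtered space from exhaustiveness of the coradical and density filtrations, compatibility of the structure operations with the canonical filtration as the main step, and functoriality inherited from Point~(2) of \cref{thm:ShiftedLinfini}. You also correctly isolate the crux, namely the additivity inequality relating the density of $c$ to the coradical degree of the single $\oC$-labelled vertex and the density degrees of the $n$ vertices on the saturated level produced by $\Cop{}{(n)}$ or $\Cop{(n)}{}$.

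The gap is in how you propose to prove that inequality. The remark that an element of $\calC(m,n)$ cannot live in $\scrD_{\max(m,n)-1}$ bounds the density of a \emph{single} element by its arity; combined with coassociativity it does not yield the additivity you need, because the issue is not the arity of any one label but how weights and sizes behave under substitution of graphs into a two-level graph. The paper argues by contraposition: if $c_0\notin\scrR_{i_0-1}$ and $c_j\notin\scrD_{i_j-1}$ for all $j$, then $\widetilde{\Delta}(c_0)$ has a nontrivial component on a graph $\gra_0$ with at least $i_0$ vertices and each $\widetilde{\Delta}(c_j)$ has one on a graph $\gra_j$ with $v_j$ vertices and $l_j$ edges crossing some level, $v_j+l_j\geqslant i_j$; coassociativity of the graph comonad then exhibits $\gra(\gra_0,\gra_1,\ldots,\gra_n)$ as supporting a nontrivial component of $\widetilde{\Delta}(c)$. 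The two combinatorial inputs are that weights add (giving at least $i_0+v_1+\cdots+v_n$ vertices) and, crucially, that \emph{sizes add under horizontal juxtaposition}: since $\gra_1,\ldots,\gra_n$ sit side by side on one level, their internal level structures can be aligned so that a single horizontal line crosses $l_1+\cdots+l_n$ edges. This is precisely the reason size was added to weight in \cref{def:DensOp}, and it also explains the asymmetry you flagged: the bottom vertex contributes only through its weight, which is why the outer summands carry the coradical filtration while only the middle one needs the density filtration. Without this juxtaposition argument the key estimate is unproved. (Two minor omissions: you do not treat the Lie-admissible products $[\,,\,]_A$, $[\,,\,]_B$ inside $\mathcal{k}_2$, nor the compatibility of $\partial$ with the filtration; both follow from the easier, purely coradical version of the same contrapositive argument, using that $d_{\oC}$ is a coderivation for $\widetilde{\Delta}$.)
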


\begin{proof}
By definition, the canonical filtration is decreasing and $\scrF_1$ is equal to the total space. 
It is complete since the coradical and the density filtrations are exhaustive.
Then, since the underlying differential $d_{\oC}$ of $\oC$ is a coderivation with respect to the graphs comonadic coalgebra structure, it preserves both the coradical and the density filtrations. This implies that the differential $\partial$ 
of the convolution $\L_\infty$-algebra 
preserves the canonical filtration.

Then, let us show that the structure operations of a convolution $\L_\infty$-algebra 
 preserve the canonical filtration. 
 We begin with the Lie-admissible product $\star$~. 
  Let $\alpha_1\in \scrF_{i_1} \Hom_{\Sy}\left(\oC, \End_A\right)$ and $\alpha_2 \in \scrF_{i_2} \Hom_{\Sy}\left(\oC, \End_A\right)$. 
  We have to show that $\alpha_1 \star \alpha_2 \in \scrF_{i_1+i_2} \Hom_{\Sy}\left(\oC, \End_A\right)$~, that is 
\[\left(\alpha_1\star \alpha_2 \right)|_{\mathscr{R}_{i_1+i_2-1}\oC}
=0~. \]
For any $c\in \scrR_{i_1+i_2-1}\oC$, we claim that all the components of its image under the infinitesimal decomposition map
$\Delta_{(1,1)}(c)=\sum \gra(c_1,c_2)$ (where the graphs  $\gra$ have $2$ vertices)
satisfy 
$c_1\in \scrR_{i_1-1}$ or $c_2\in \scrR_{i_2-1}$~; this would be enough to conclude since then one would have 
$\gra(\alpha_1(c_1), \alpha_2(c_2))=0$~. 
Suppose that this property is wrong, that is $c_1\notin \scrR_{i_1-1}$ and $c_2\notin \scrR_{i_2-1}$. 
This would mean that the image of $c_1$ (resp. $c_2$) under the comonadic decomposition map 
would contain a non-trivial component based on a graph $\gra_1$ (resp. $\gra_2$) with at least $i_1$ (resp. $i_2$)  vertices. 
The coassociativity of the graphs comonad would then show that there exists a non-trivial component 
based on the graph $\gra(\gra_1, \gra_2)$ with at least $i_1+i_2$ vertices in $\widetilde{\Delta}(c)$, which would contradict the fact that 
$c\in \scrR_{i_1+i_2-1}\oC$~.

Let us apply the same kind of arguments for the operations $\UU_n$ and $\DD_n$~. We will only write the proof for the first ones since it will be the same for the second ones. 
Let 
$\beta \in \scrF_{i_0} \Hom_{\Sy}\left(\oC, \End_B\right)$ 
and
$f_1\in \scrF_{i_1} \Hom_{\Sy}\left(\calC, \End^A_B\right), \ldots, f_n\in \scrF_{i_n} \Hom_{\Sy}\left(\calC, \End^A_B\right)$, that is 
$\beta|_{\mathscr{R}_{i_0-1} \oC}= 0$ and 
$f_{i_j}|_{\mathscr{D}_{i_j-1} \calC}= 0$, for $1\leqslant j \leqslant n$~. 
We have to show that $\UU_{n+1}(s\beta, f_1, \ldots, f_n)\in \scrF_{i_0+i_1+\cdots+i_n} \Hom_{\Sy}\left(\calC, \End^A_B\right)$~, that is 
$\UU_{n+1}(s\beta, f_1, \ldots, f_n)|_{\mathscr{D}_{i_0+i_1+\cdots+i_n-1}\calC}
=0$~. 
For any $c\in \mathscr{D}_{i_0+i_1+\cdots+i_n-1}\calC$, we claim that all the components of its image 
under the right infinitesimal decomposition map
$\Delta_{(n)}(c)=\sum \gra(c_0, c_1, \ldots,  c_n)$ (where the graphs $\gra$ have  $2$ levels with bottom level made up of one vertex labelled by $c_0$ and unlabelled egdes and top level made up of $n$ vertices or edges labelled by $c_1, \ldots, c_n$~)
satisfy 
$c_0\in \scrR_{i_0-1}$ or there exists $1\leqslant j \leqslant n$  such that $c_j\in \scrD_{{i_j}-1}$~; this would be enough to conclude since then one would have 
$\beta(c_0)\otimes f_1(c_1)\otimes \cdots \otimes f_n(c_n)=0$~. 
Suppose that this property is wrong, that is 
$c_0\notin \scrR_{i_0-1}$ and 
$c_j\notin \scrD_{{i_j}-1}$~, for all $1\leqslant j \leqslant n$~. 
This would mean that the image of $c_0$ under the comonadic decomposition map 
would contain a non-trivial component based on a graph $\gra_0$ with at least $i_0$ vertices and that 
 the image of $c_j$ under the comonadic decomposition map 
would contain a non-trivial component based on a graph $\gra_j$ with at least $i_j$ vertices plus crossings of edges along one level, for any $1\leqslant j \leqslant n$~. 
(If $c_j=\id$, then $c_j\notin \scrD_{{i_j}-1}$ implies $i_j=1$ and the arguments still hold with the trivial graph which has no vertex and whose size is equal to $1$.) 
Let us denote by $v_j$ its number of vertices and $l_j$ its maximum number of edges crossing one level, so that $v_j+l_j\geqslant i_j$~. 
The coassociativity of the graphs comonad shows that  
the graph $\gra(\gra_0, \gra_1, \ldots, \gra_n)$ supports a non-trivial component of $\widetilde{\Delta}(c)$~. 
It admits at least $i_0+v_1+\cdots+v_n$ vertices and at least $l_1+\cdots+l_n$ edges crossing one level, since the graphs $\gra_1, \ldots, \gra_n$ are put horizontally one next to each other in $\gra$~. This shows that the sum of the weight and the size of the graph $\gra(\gra_0, \gra_1, \ldots, \gra_n)$ is least $i_0+i_1+\cdots+i_n$~, which contradicts the fact that 
$c\in \scrD_{i_0+i_1+\cdots+i_n-1}\calC$~.

The functoriality is a straightforward consequence of Point~(2) of \cref{thm:ShiftedLinfini} and the definitions of the various filtrations. 
\end{proof}

\begin{theorem}\label{prop:MCInfty}
A triple $(\alpha, f, \beta)$ is a Maurer--Cartan element in the  shifted $\L_\infty$-algebra 
$\mathfrak{k}_{\calC, A, B}$ if and only if 
$(A,\alpha)$ and $(B, \beta)$ are two $\Omega \calC$-gebras and 
$f$ is an $\infty$-morphism between them. 
\end{theorem}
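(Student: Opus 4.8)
The plan is to substitute a general degree-zero element $(s\alpha, f, s\beta)$ of $\mathfrak{k}_{\calC, A, B}$ -- so that $\alpha$ and $\beta$ have degree $-1$ and $f$ has degree $0$ -- into the Maurer--Cartan equation and to read it off componentwise. On any fixed element of $\oC$ or of $\calC$ the comonadic decomposition map $\widetilde{\Delta}$ is a finite sum, so only finitely many of the operations $\DD_n, \UU_n$ contribute and the series converge (this is the content of the completeness statement \cref{prop:CompHoLieAlgeStructure}). Since $\DD_n$ and $\UU_n$ take values in the middle factor $\Hom_{\Sy}(\calC, \End^A_B)$, whereas the suspended brackets $[\,\,,\,]_A$ and $[\,\,,\,]_B$ take values in $s\Hom_{\Sy}(\oC, \End_A)$ and $s\Hom_{\Sy}(\oC, \End_B)$ respectively, the single Maurer--Cartan equation splits into three independent equations, one in each summand of the total space.

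The $A$- and $B$-components each receive a contribution only from $\mathcal{k}_2$, and there only from the $s\alpha$ (resp. $s\beta$) slots, because $[\,\,,\,]_A$ and $[\,\,,\,]_B$ vanish on the remaining factors. Hence the $A$-component reads $-\partial(s\alpha) + \tfrac{1}{2}[s\alpha, s\alpha]_A = 0$. As $\alpha$ is odd, the associated Lie bracket satisfies $[\alpha, \alpha] = 2\,\alpha \star \alpha$, so this is the suspension of the Maurer--Cartan equation $\partial(\alpha) + \alpha \star \alpha = 0$ in the convolution Lie-admissible algebra $\g_{\calC, A}$; by \cite[Definition~2.20]{HLV19} it is equivalent to $(A, \alpha)$ being an $\Omega\calC$-gebra. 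The $B$-component is identical and characterises $(B, \beta)$.

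The core of the proof is the middle component, which involves $-\partial(f)$ together with $\sum_{n\geqslant 1} \tfrac{1}{(n+1)!}\big(\DD_{n+1} - \UU_{n+1}\big)\big((s\alpha, f, s\beta), \ldots, (s\alpha, f, s\beta)\big)$. I claim this resums to $f \rhd \alpha - \beta \lhd f$. Fix $n$ and expand $\DD_{n+1}$ by multilinearity: the only non-vanishing inputs have exactly one $s\alpha$ and $n$ copies of $f$, and there are $n+1$ ways to distribute the distinguished $s\alpha$ slot, all giving the same value by the symmetric extension of $\DD_{n+1}$ (the Koszul signs being trivial as $f$ and $s\alpha$ are even). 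This factor $n+1$ reduces $\tfrac{1}{(n+1)!}$ to $\tfrac{1}{n!}$; and since $f$ is even, the symmetrisation $f \odot \cdots \odot f$ equals $n!\, f^{\otimes n}$, which absorbs the remaining $\tfrac{1}{n!}$. What survives is precisely the weight-$n$ summand of the right action $f \rhd \alpha$ assembled from $\Cop{(n)}{}$, $f^{\otimes n}$ and $\alpha$. Summing over $n \geqslant 1$ yields $f \rhd \alpha$, and the identical computation for $\UU_{n+1}$ yields $\beta \lhd f$. The middle equation therefore becomes $\partial(f) = f \rhd \alpha - \beta \lhd f$, which is exactly the defining relation \eqref{eq:Morph} of an $\infty$-morphism from \cref{def:infmor}.

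The main obstacle is this double bookkeeping in the middle summand: one must keep track of the suspension signs in the brackets and, above all, check that the two factorial cancellations -- the $n+1$ choices of distinguished slot and the $n!$ coming from the symmetrisation of identical even inputs -- together remove the coefficient $\tfrac{1}{(n+1)!}$ and recover the unnormalised infinitesimal actions of \cref{subsec:PropCal}. As a conceptual cross-check requiring no computation, the whole correspondence also follows from \cref{thm:ShiftedLinfini}: there $\mathfrak{k}_{\calC, A, B}$ is exhibited as the convolution $\L_\infty$-algebra of the $2$-colored homotopy coproperad $\oC_0 \oplus s\calC^0_1 \oplus \oC_1$ with values in $\End_{A \oplus B}$, whose generators are $s^{-1}$ of those of the quasi-free properad $(\Omega\calC)_{\bullet \rightsquigarrow \bullet}$. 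Maurer--Cartan elements of such a convolution algebra are morphisms of $2$-colored dg properads $(\Omega\calC)_{\bullet \rightsquigarrow \bullet} \to \End_{A \oplus B}$, which by \cite[Proposition~3.16]{HLV19} are exactly pairs of $\Omega\calC$-gebras related by an $\infty$-morphism.
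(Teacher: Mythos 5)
Your proposal is correct and follows essentially the same route as the paper: split the Maurer--Cartan equation into its three components, identify the $A$- and $B$-components with the classical convolution-algebra Maurer--Cartan equations characterising $\Omega\calC$-gebra structures, and resum the middle component into $-\partial(f)+f\rhd\alpha-\beta\lhd f$. You are in fact more explicit than the paper about the factorial bookkeeping (the $n+1$ placements of $s\alpha$ against the $n!$ from symmetrising the even inputs $f$), which the paper's one-line computation leaves implicit, and your closing cross-check via \cref{thm:ShiftedLinfini} and \cite[Proposition~3.16]{HLV19} is consistent with how the paper itself motivates the construction.
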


\begin{proof}
The part of the proof dealing with $\alpha$ and $\beta$ is already known, see \cite[Proposition~17]{MerkulovVallette09I}. 
Regarding the term $f$, it is enough to see that the remaining summand of the Maurer--Cartan equation, involving at least one of it, is equal to
\begin{align*}
-\partial(f) + \sum_{n\geqslant 2}  \DD_n(f , \ldots, f, s\alpha) -  \sum_{n\geqslant 2}  \UU_n(s\beta,f , \ldots, f) = 
-\partial(f)+f \lhd \alpha - \beta \rhd f=0\ , 
\end{align*}
which is  the equation of $\infty$-morphisms, under the notations of \cref{subsec:PropCal}.
\end{proof}

\begin{corollary}\label{cor:HoLieAlgeStructureBIS}
Let $\calC$ be a coaugmented coproperad and let $(A,\alpha)$ and $(B, \beta)$ be two $\Cobar\calC$-gebra structures. 
\begin{enumerate}
\item The operations 
\[	
	\left\{
	\begin{array}{ll}
		\mathcal{l}_1 \coloneqq \DD_2(-,s\alpha)-\UU_2(s\beta, -)- \partial\ ,
		& \text{for}\ n=1\ ,
		\\
		\mathcal{l}_n\coloneqq \DD_{n+1}(-, \cdots, -, s\alpha)-\UU_{n+1}(s\beta, -, \cdots, -) \ , 
		& \text{for}\  n\geqslant 2\ ,
	\end{array}
	\right.
\]
endow the graded vector space $\Hom_{\Sy}\big(\calC, \End^A_B\big)$
with a  shifted $\L_\infty$-algebra structure
\[
	\mathfrak{h}_{\alpha,\beta}\coloneq \left(\Hom_{\Sy}\big(\calC, \End^A_B\big), \{\mathcal{l}_n\}_{n\geqslant 1}\right)~.
\]

\item When the dg coproperad $\calC$ is conilpotent, the canonical filtration $\left\{\scrF_k \Hom_{\Sy}\big(\calC, \End^A_B\big)\right\}_{k\geqslant 1}$ endows it with a complete shifted $\L_\infty$-algebra structure. 

\item This assignment is functorial as follows: given a morphism of conilpotent dg coproperads 
$G : \calD \to \calC$, the morphism of filtered graded vector spaces 
$G^* : \Hom_{\Sy}\big(\calC, \End^A_B\big) \to \Hom_{\Sy}\big(\calD, \End^A_B\big)$ is a strict morphism of complete shifted $\L_\infty$-algebras when one considers the $\Cobar \calD$-gebra structures $G^*(\alpha)$ and $G^*(\beta)$ on $A$ and $B$ respectively.
\end{enumerate}
\end{corollary}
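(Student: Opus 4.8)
The plan is to derive \cref{cor:HoLieAlgeStructureBIS} as a direct consequence of the twisting procedure applied to the ambient shifted $\L_\infty$-algebra $\mathfrak{k}_{\calC, A, B}$ constructed in \cref{thm:ShiftedLinfini} and \cref{prop:CompHoLieAlgeStructure}. The key observation is that a pair of $\Cobar\calC$-gebra structures $(A,\alpha)$ and $(B,\beta)$ determines a Maurer--Cartan element of $\mathfrak{k}_{\calC, A, B}$, namely the triple $(\alpha, 0, \beta)$, by the very computation underlying \cref{prop:MCInfty}: since $\alpha$ and $\beta$ are themselves Maurer--Cartan in the respective convolution algebras and the middle component is taken to be zero, the full Maurer--Cartan equation reduces to the two separate equations governing $\alpha$ and $\beta$, which hold by hypothesis. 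The middle summand of the twisted structure then restricts to $\Hom_{\Sy}\big(\calC, \End^A_B\big)$, and this restriction is exactly $\mathfrak{h}_{\alpha,\beta}$.

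First I would verify that $(\alpha, 0, \beta)$ is a genuine Maurer--Cartan element of $\mathfrak{k}_{\calC, A, B}$. This is essentially the content of \cref{prop:MCInfty} specialised to $f=0$: the operations $\UU$ and $\DD$ all vanish when evaluated on a zero middle input, so the only surviving terms of the Maurer--Cartan equation are the shifted Lie brackets $[\,\,,\,]_A$ and $[\,\,,\,]_B$ together with the differential, whose vanishing amounts precisely to $\alpha$ and $\beta$ being $\Cobar\calC$-gebra structures. Second I would apply the twisting procedure to produce the complete shifted $\L_\infty$-algebra $\mathfrak{k}_{\calC, A, B}^{(\alpha,0,\beta)}$ and identify the twisted structure operations on the middle summand. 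Here I would compute the twisted operations $\mathcal{l}_n^{(\alpha,0,\beta)}$ restricted to inputs in $\Hom_{\Sy}\big(\calC, \End^A_B\big)$: twisting by $(\alpha,0,\beta)$ replaces each $\mathcal{k}_m$ by a sum over insertions of copies of the Maurer--Cartan element, and because the two outer components $\alpha$ and $\beta$ can only enter through the operations $\DD$ and $\UU$ respectively (the bracket terms $[\,\,,\,]_A$ and $[\,\,,\,]_B$ produce outputs in the outer summands, not the middle one), the surviving contributions are exactly $\DD_{n+1}(-, \ldots, -, s\alpha)$ and $-\UU_{n+1}(s\beta, -, \ldots, -)$, matching the stated formulas for $\mathcal{l}_n$.

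For point (2), completeness is inherited directly: the canonical filtration on $\mathfrak{k}_{\calC, A, B}$ is preserved by the twisting procedure since twisting only inserts the fixed element $(\alpha,0,\beta)$, which lies in filtration degree bounded below, and the restriction of a complete filtration to a closed summand is again complete. More precisely, the twisted operations still raise the canonical filtration because each $\DD$ and $\UU$ does so by \cref{prop:CompHoLieAlgeStructure}, so the restriction to $\Hom_{\Sy}\big(\calC, \End^A_B\big)$ equipped with $\left\{\scrF_k \Hom_{\Sy}\big(\calC, \End^A_B\big)\right\}_{k\geqslant 1}$ is complete. For point (3), functoriality follows from Point~(2) of \cref{thm:ShiftedLinfini}: the pullback $G^*$ is a strict morphism of shifted $\L_\infty$-algebras, and strict morphisms commute with the twisting procedure, sending the Maurer--Cartan element $(\alpha, 0, \beta)$ to $(G^*\alpha, 0, G^*\beta)$; hence $G^*$ restricts to a strict morphism $\mathfrak{h}_{\alpha,\beta} \to \mathfrak{h}_{G^*\alpha, G^*\beta}$, and the filtration compatibility is immediate from the definitions.

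The main obstacle I anticipate is the bookkeeping in step two: one must check carefully that when twisting $\mathfrak{k}_{\calC, A, B}$ by $(\alpha,0,\beta)$, the genuinely \emph{mixed} terms---those that would mix an outer input with the middle summand in a way producing a middle output---either vanish or reorganise exactly into the claimed $\DD$ and $\UU$ expressions, with the correct signs and symmetrisation factors $\tfrac{1}{k!}$ collapsing appropriately. In particular one should confirm that no spurious contributions from the brackets $[\,\,,\,]_A$ and $[\,\,,\,]_B$ leak into the middle component after twisting, which is where the specific block structure of the operations $\mathcal{k}_n$ (the fact that $\DD$ lands in $\End^A_B$ using $\alpha$ on the right and $\UU$ using $\beta$ on the left) does the essential work.
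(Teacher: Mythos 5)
Your proposal is correct and follows essentially the same route as the paper: the authors also obtain $\mathfrak{h}_{\alpha,\beta}$ as the subalgebra supported on $\Hom_{\Sy}\big(\calC, \End^A_B\big)$ of $\mathfrak{k}_{\calC, A, B}$ twisted by the Maurer--Cartan element $\alpha+\beta$ (your $(\alpha,0,\beta)$), and deduce (2) from \cref{prop:CompHoLieAlgeStructure} and (3) from Point~(2) of \cref{thm:ShiftedLinfini} under that interpretation. The bookkeeping you flag at the end is harmless, since each $\DD_m$ and $\UU_m$ accepts exactly one outer input, so only the $k=1$ term of the twisting series survives and the $\tfrac{1}{k!}$ factors are trivial.
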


\begin{proof}\leavevmode
\begin{enumerate}
\item
This can be seen as a corollary of Point~(1) of \cref{thm:ShiftedLinfini}: the shifted $\L_\infty$-algebra $\mathfrak{h}_{\alpha,\beta}$ is equal to the subalgebra supported by 
$\Hom_{\Sy}\big(\calC, \End^A_B\big)$ 
of the complete shifted 
$\L_\infty$-algebra $\mathfrak{k}_{\calC, A, B}$ twisted by the Maurer--Cartan element $\alpha+\beta$.

\item  This is a corollary of \cref{prop:CompHoLieAlgeStructure} under the abovementioned interpretation with the twisting procedure. Let us notice here that, one could consider as well the canonical filtration on $\Hom_{\Sy}\big(\calC, \End^A_B\big)$ induced by the filtration on $\calC$ defined by the size only. 

\item This is a corollary of  Point~(2) of \cref{thm:ShiftedLinfini} under the abovementioned interpretation with the twisting procedure. 

\end{enumerate}
\end{proof}

\begin{corollary}\label{prop:MCInfMor}
	The set of Maurer--Cartan elements of the shifted homotopy Lie algebra $\mathfrak{h}_{\alpha,\beta}$ is equal to the set of $\infty$-morphisms between the $\Cobar \calC$-gebras $(A,\alpha)$ to $(B, \beta)$.
\end{corollary}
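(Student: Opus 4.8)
The plan is to derive \cref{prop:MCInfMor} as an immediate specialization of the already-established results, rather than recomputing anything from scratch. The key observation is that the statement is precisely the restriction of \cref{prop:MCInfty} to the situation where the outer $\Cobar\calC$-gebra structures $\alpha$ and $\beta$ are held fixed. So first I would recall the shifted $\L_\infty$-algebra $\mathfrak{h}_{\alpha,\beta}$ from \cref{cor:HoLieAlgeStructureBIS}, whose underlying space is $\Hom_{\Sy}\big(\calC,\End^A_B\big)$ and whose operations $\{\mathcal{l}_n\}_{n\geqslant 1}$ are obtained by twisting $\mathfrak{k}_{\calC,A,B}$ by the Maurer--Cartan element $\alpha+\beta$.

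The main step is to unwind what the Maurer--Cartan equation in $\mathfrak{h}_{\alpha,\beta}$ says. By the twisting procedure (\cref{cor:HoLieAlgeStructureBIS}, Point~(1)), an element $f\in\Hom_{\Sy}\big(\calC,\End^A_B\big)$ is a Maurer--Cartan element of $\mathfrak{h}_{\alpha,\beta}$ if and only if $\alpha+f+\beta$ is a Maurer--Cartan element of the untwisted $\L_\infty$-algebra $\mathfrak{k}_{\calC,A,B}$, where I use that $\alpha+\beta$ is itself Maurer--Cartan (this encodes the two gebra structures). Then I would invoke \cref{prop:MCInfty}: the triple $(\alpha,f,\beta)$ is a Maurer--Cartan element of $\mathfrak{k}_{\calC,A,B}$ exactly when $(A,\alpha)$ and $(B,\beta)$ are $\Omega\calC$-gebras and $f$ is an $\infty$-morphism between them. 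Since $\alpha$ and $\beta$ are fixed gebra structures by hypothesis, the content reduces precisely to: $f$ is Maurer--Cartan in $\mathfrak{h}_{\alpha,\beta}$ if and only if $f$ is an $\infty$-morphism $(A,\alpha)\rightsquigarrow(B,\beta)$.

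Concretely, I would simply expand the Maurer--Cartan equation $\sum_{n\geqslant 1}\frac{1}{n!}\mathcal{l}_n(f,\ldots,f)=0$ using the explicit operations from \cref{cor:HoLieAlgeStructureBIS} and observe it coincides with the equation computed in the proof of \cref{prop:MCInfty}, namely
\[
-\partial(f)+f\lhd\alpha-\beta\rhd f=0\ ,
\]
which is \cref{eq:Morph} defining an $\infty$-morphism. The symmetrization factors $\frac{1}{n!}$ cancel against the number of ways of inserting the $n$ copies of $f$ into the operations $\DD_{n+1}$ and $\UU_{n+1}$, so the infinite sum collapses to the finite-looking convolution expression; this bookkeeping is the only place requiring a small check.

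The proof is therefore almost entirely a matter of correctly matching the twisted Maurer--Cartan equation with the defining equation of an $\infty$-morphism, and I do not anticipate any genuine obstacle beyond verifying the combinatorial cancellation of the $\frac{1}{n!}$ factors. The only subtlety worth flagging is to make sure the completeness hypothesis (needed for the infinite Maurer--Cartan series to converge) is in force; but this is guaranteed whenever $\calC$ is conilpotent via \cref{prop:CompHoLieAlgeStructure}, and in any case each summand lands in a strictly deeper filtration layer so the expression is well defined as a finite sum on each arity. Thus the corollary follows formally from \cref{prop:MCInfty} and \cref{cor:HoLieAlgeStructureBIS}.
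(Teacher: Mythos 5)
Your proposal is correct and follows exactly the paper's own route: the paper likewise deduces the corollary from \cref{prop:MCInfty} by viewing $\mathfrak{h}_{\alpha,\beta}$ as the subalgebra of $\mathfrak{k}_{\calC,A,B}$ twisted at the Maurer--Cartan element $\alpha+\beta$, so that $f$ is Maurer--Cartan in $\mathfrak{h}_{\alpha,\beta}$ precisely when $(\alpha,f,\beta)$ is Maurer--Cartan in $\mathfrak{k}_{\calC,A,B}$. Your additional unwinding of the $\frac{1}{n!}$ bookkeeping and the convergence remark are consistent with (and slightly more detailed than) the paper's one-line argument.
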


\begin{proof}
With the interpretation of the complete shifted $\L_\infty$-algebra $\mathfrak{h}_{\alpha,\beta}$ as a subalgebra of the complete shifted 
$\L_\infty$-algebra $\mathfrak{k}_{\calC, A, B}$ twisted at $\alpha+\beta$, 
this is a direct corollary of \cref{prop:MCInfty}. 
\end{proof}

\begin{remark}
	When the coproperad $\calC$ is concentrated in arities $(1,n)$, that is when it is a cooperad, we recover the result of \cite[Theorem~57]{MerkulovVallette09I}. 
\end{remark}

\begin{convention}
Since we will always work in the complete setting and under the shifted convention throughout the rest of this paper, \underline{we will  drop the articles ``complete'' and ``shifted'' for sim-} 
\underline{plicity.}
\end{convention}

\subsection{Curved $\infty$-morphisms} We will work with  the following generalisation of the notion of an $\Linfty$-algebra.

\begin{definition}[Curved  $\L_\infty$-algebra]
A \emph{curved  $\L_\infty$-algebra structure}, on a complete graded module $(\g, \scrF)$ satisfying $\g_n= \scrF_1 \g_n$ for any $n\in \ZZ$, 
 is a collection 
$(\mathcal{l}_0, \mathcal{l}_1, \mathcal{l}_2, \ldots)\in \prod_{n\geqslant 0} \hom\left(\g^{{\odot} n}, \g\right)_{-1}$ of symmetric maps of degree $-1$, which respect the filtration and which satisfy, for any $n\geqslant 0$, 
\[
\sum_{p+q=n}
\sum_{\sigma\in \mathrm{Sh}_{p,q}^{-1}}
 (\mathcal{l}_{p+1}\circ_{1} \mathcal{l}_q)^{\sigma}=0\ .
\]
\end{definition}

An $\Linfty$-algebra is a curved $\Linfty$-algebra with trivial curvature $\mathcal{l}_0=0$. 

\begin{remark}
Recall that an $\Linfty$-algebra is an algebra over the operad $\Cobar \Com^*$, that is a Maurer--Cartan element $\mathcal{l}$ in the convolution pre-Lie algebra 
\[
      \left(\hom_\Sy(\Com^*,\eend_A)\cong \prod_{n\geqslant 1}\hom\left(\g^{{\odot} n}, \g\right),\partial,  \star\right)\ .
\]
We denote by $\mathrm{c}_n$ the generating element of $\Com^*(n)$~; under this convention, we have 
$\mathcal{l}(\mathrm{c}_n)=\mathcal{l}_n$. 
The operad $\Com$ encoding commutative algebras admits an extension with the operad 
$\uCom$ encoding unital commutative algebras. 
Maurer--Cartan elements in the convolution pre-Lie algebra 
\[
      \left(\hom_\Sy(\uCom^*,\eend_A)\cong \prod_{n\geqslant 0}\hom\left(\g^{{\odot} n}, \g\right),\partial,  \star\right)\ .
\]
associated to the (partial) cooperad $\uCom^*$ are in one-to-one correspondence with  curved $\Linfty$-algebra structures on $A$, see \cite[Chapter~3]{DSV18}. 
\end{remark}

\begin{definition}[$\infty$-morphism of curved $\Linfty$-algebras]
An \emph{$\infty$-morphism} between two curved $\Linfty$-algebras $(\g,\mathcal{l})$ and $(\h,\mathcal{k})$ is  
a collection 
$(\Phi_0, \Phi_1, \Phi_2, \ldots)\in \prod_{n\geqslant 0} \hom\left(\g^{{\odot} n}, \h\right)_0$ of symmetric maps of degree $0$ 
satisfying 
\begin{equation}\label{eq:CurvedLinfty}
\sum_{p+q=n}
\sum_{\sigma\in \mathrm{Sh}_{p,q}^{-1}}
 (\Phi_{p+1}\circ_{1} \mathcal{l}_q)^{\sigma}=
 \sum_{i_1+\cdots+i_k=n}
\sum_{\sigma\in \mathrm{Sh}_{i_1, \ldots, i_k}^{-1}}
 \mathcal{k}_k\circ \left(\Phi_{i_1}, \ldots, \Phi_{i_k}\right)^{\sigma}\ ,
\end{equation}
for any $n\geqslant 0$. It is called \emph{continuous} when the maps respect the filtrations. 
The composite of curved $\infty$-morphisms is given by 
\begin{equation}
(\Psi\circledcirc \Phi)_n\coloneq 
 \sum_{i_1+\cdots+i_k=n}
 \sum_{\sigma\in \mathrm{Sh}_{i_1, \ldots, i_k}^{-1}}
\Psi_k\circ \left(\Phi_{i_1}, \ldots, \Phi_{i_k}\right)^{\sigma}
\ .
\end{equation}
We denote $\infty$-morphisms of  curved $\Linfty$-algebras by $\Phi\colon \frakg \rightsquigarrow \frakh$ and the associated category by $\cLi$~.
The subcategory made up of continuous $\infty$-morphisms is denoted by 
$\mathsf{cont}\textsf{-}\mathsf{cL}_\infty$~.
\end{definition}

\begin{remark}
This notion can be obtained by applying \cref{def:infmor} to the ``cooperad'' $\uCom^*$ seen as a ''coproperad'' concentrated in arity $(1,n)$, for $n\geqslant 0$.
Under the notation $\Phi_n\coloneq \Phi(\mathrm{c_n})\colon \g^{{\odot} n} \to \frakh$, \cref{eq:CurvedLinfty} given above coincides with \cref{eq:Morph}, which is 
$\Phi\rhd  \mathcal{l}     =  \mathcal{k} \lhd \Phi$~. 
Strictly speaking, the linear dual $\uCom^*$ of the operad $\uCom$ does not form a cooperad with full decomposition map, but only a cooperad with partial decomposition maps. The  issue here is that the operator $\Cop{}{(*)}$ might produce infinite series. The property that  $\Phi(\mathrm{c}_0)(1)=\Phi_0(1)\in \scrF_1\h$, for $1\in \k\cong \g^{\odot 0}$, ensures that all the formulae actually make sense.
\end{remark}
 
Notice that a (curved) $\infty$-morphism between two $\Linfty$-algebras with trivial constant component $\Phi_0=0$ is a (classical) $\infty$-morphism, see \cite[Proposition~10.2.7]{LodayVallette12}. The data of an $\infty$-morphism 
$\Phi=(\Phi_0, \Phi_1, \Phi_2, \ldots)\colon \frakg \rightsquigarrow \frakh$ between two curved $\Linfty$-algebras is equivalent to a Maurer--Cartan element $a\coloneq \Phi_0(1)\in \MC(\h)$ and an $\infty$-morphism of the form $(0, \Phi_1, \Phi_2, \ldots)\colon \g \rightsquigarrow \h^a$ from $\g$ to $\h$ twisted by $a$, see \cite[Chapter~3, Section~4]{DSV18} for more details. The purpose for considering this enhanced version of $\infty$-morphisms due to Dolgushev--Rogers \cite{DolgushevRogers17} is to model non-necessarily pointed maps between Maurer--Cartan space, see \cref{subsec:Integration}. 

\medskip 

The Maurer--Cartan assignment 
can be improved into a functor $\MC \colon \mathsf{cont}\textsf{-}\mathsf{cL}_\infty \to \mathsf{Set}$ under the formula 
\[\MC(\Phi)(\alpha)\coloneqq \sum_{n\geqslant 0}{\textstyle \frac{1}{n!}}\Phi_n(\alpha, \ldots, \alpha)~,\]
where $\Phi_0()=\Phi_0(1)$ by a slight abuse of notation. Such a formula always makes sense for continuous $\infty$-morphisms, otherwise one has to check directly that it is well defined, see \cref{thm:EnrichCat} for instance. 

\medskip

The direct sum $(\g,\mathcal{l}) \oplus (\h,\mathcal{k})$ of  curved $\Linfty$-algebras is defined by 
\[\mathcal{m}_n(g_1\oplus h_1, \ldots, g_n\oplus h_n)\coloneq 
\mathcal{l}_n(g_1, \ldots, g_n)+
\mathcal{k}_n(h_1, \ldots, h_n)
\ . \] 
It is functorial with respect to the following data: given two $\infty$-morphisms 
$\Phi\colon \frakg \rightsquigarrow \frakg'$ and $\Psi \colon \frakh \rightsquigarrow \frakh'$, we consider the 
$\infty$-morphism 
$\Upsilon\colon \frakg \oplus \frakg' \rightsquigarrow \frakh\oplus \frakh'$
defined by 
\[
\Upsilon_n(g_1\oplus h_1, \ldots, g_n\oplus h_n)\coloneq 
\Phi_n(g_1, \ldots, g_n)+
\Psi_n(h_1, \ldots, h_n)~. 
\]
This construction is actually the product in the category of curved $\Linfty$-algebras with their $\infty$-morphisms. 

\begin{proposition}[{\cite[Section~3]{DolgushevRogers17}}]\label{prop:SymMono}\leavevmode
\begin{enumerate}
\item The category $(\cLi, \oplus)$ is cartesian symmetric monoidal.
\item The Maurer--Cartan functor
is strong symmetric monoidal: 
\[\MC \colon (\mathsf{cont}\textsf{-} \mathsf{cL}_\infty, \oplus)
\to (\Set, \times)~. \]
\end{enumerate}
\end{proposition}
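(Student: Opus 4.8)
The plan is to prove the two claims of \cref{prop:SymMono} essentially by verifying that the proposed direct-sum operation $\oplus$ is the categorical product and then checking that $\MC$ transports this product to the cartesian product of sets. For the first point, I would first confirm that $(\cLi,\oplus)$ is symmetric monoidal: the associativity, unit (the zero curved $\Linfty$-algebra), and symmetry constraints are all immediate from the formula defining $\mathcal{m}_n$, since the structure maps on a direct sum act ``diagonally'' and no cross-terms appear. The functoriality with respect to $\infty$-morphisms is exactly the assignment $(\Phi,\Psi)\mapsto \Upsilon$ given in the excerpt, and one must check that $\Upsilon$ really is an $\infty$-morphism, i.e.\ that it satisfies \cref{eq:CurvedLinfty}; this follows because the defining equation splits into its $\frakg$-component and its $\frakh$-component, each of which holds by hypothesis on $\Phi$ and $\Psi$.

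To upgrade ``monoidal'' to ``cartesian'' I would exhibit the universal property of the product. First I would define the projections $\pi_{\frakg}\colon \frakg\oplus\frakh \rightsquigarrow \frakg$ and $\pi_{\frakh}\colon \frakg\oplus\frakh \rightsquigarrow \frakh$ as the \emph{strict} $\infty$-morphisms whose only nonzero component is the linear projection in arity $1$, and the diagonal $\infty$-morphism using the strict inclusion maps. Then, given any curved $\Linfty$-algebra $\frake$ together with $\infty$-morphisms $\Phi\colon \frake\rightsquigarrow\frakg$ and $\Psi\colon \frake\rightsquigarrow\frakh$, the candidate pairing is the collection $(\Phi\oplus\Psi)_n\coloneqq \Phi_n\oplus\Psi_n\colon \frake^{\odot n}\to \frakg\oplus\frakh$. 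I would check that this is an $\infty$-morphism (again by the componentwise splitting of \cref{eq:CurvedLinfty}), that composing with $\pi_{\frakg}$ and $\pi_{\frakh}$ recovers $\Phi$ and $\Psi$ respectively (using the composition formula for curved $\infty$-morphisms, where post-composition with a strict morphism simply applies its linear component), and that it is the unique such $\infty$-morphism. Uniqueness follows because the projections determine each component $(\Phi\oplus\Psi)_n$ in each summand separately.

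For the second point, I would show that $\MC$ is strong symmetric monoidal by producing a natural isomorphism $\MC(\frakg\oplus\frakh)\cong \MC(\frakg)\times\MC(\frakh)$ compatible with the monoidal structures. A degree $0$ element of $\frakg\oplus\frakh$ is a pair $(g,h)$, and since $\mathcal{m}_n$ has no cross-terms, the Maurer--Cartan equation $\sum_{n\geqslant 1}\frac{1}{n!}\mathcal{m}_n\big((g,h),\ldots,(g,h)\big)=0$ splits into the two independent equations $\sum_n \frac{1}{n!}\mathcal{l}_n(g,\ldots,g)=0$ and $\sum_n \frac{1}{n!}\mathcal{k}_n(h,\ldots,h)=0$; hence $\alpha\in\MC(\frakg\oplus\frakh)$ iff its two components are Maurer--Cartan in $\frakg$ and $\frakh$ respectively. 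I would then verify that on morphisms $\MC(\Upsilon)=\MC(\Phi)\times\MC(\Psi)$ using the definition $\MC(\Upsilon)(\alpha)=\sum_n\frac{1}{n!}\Upsilon_n(\alpha,\ldots,\alpha)$ together with the additive formula for $\Upsilon_n$, and finally check that the coherence isomorphisms (for the unit and the associator) are respected, which is formal. Here one must restrict to $\mathsf{cont}\textsf{-}\mathsf{cL}_\infty$ precisely so that the series defining $\MC(-)$ converges, as noted in the excerpt.

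The main obstacle I anticipate is bookkeeping rather than conceptual: one has to be careful that the composition formula for curved $\infty$-morphisms interacts correctly with the direct-sum components, in particular that post-composing a pairing with a projection genuinely collapses to the correct factor despite the nonlinear higher terms $\Phi_{i_1}\odot\cdots\odot\Phi_{i_k}$ appearing in the composite. The key observation that dissolves this difficulty is that all structure maps and all components of $\pi_{\frakg},\pi_{\frakh}$ and of the summands preserve the direct-sum decomposition, so every term in \cref{eq:CurvedLinfty} and in the composition formula is block-diagonal; once this is recorded, both the universal property and the monoidality of $\MC$ reduce to the corresponding one-summand statements, which are part of the standard theory of curved $\Linfty$-algebras recalled from \cite{DSV18, DolgushevRogers17}.
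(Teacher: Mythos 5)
Your proposal is correct and matches the paper, whose entire proof is the single sentence ``The proof relies on straightforward verifications'': the verifications you spell out (block-diagonality of the structure maps, the universal property of $\oplus$ via strict projections, and the componentwise splitting of the Maurer--Cartan equation and of $\MC$ on morphisms) are exactly the intended ones. The only cosmetic slip is writing the Maurer--Cartan sum from $n\geqslant 1$ where the curved setting requires the $n=0$ curvature term as well; this does not affect the splitting argument.
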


\begin{proof}
The proof relies on straightforward verifications. 
\end{proof}

\subsection{Enrichment over curved $\Linfty$-algebras}
The goal of this section is to enrich the category of $\Cobar \calC$-gebras 
over the symmetric monoidal category $(\cLi, \oplus)$ such that is gives back the category $\infty\textsf{-}\catofgebras{\Cobar\souche}$ under the Maurer--Cartan functor. 

\medskip

We consider the projection of the decomposition map of the coproperad $\calC$ onto the space of 2-level connected graphs with a total number of $n$ vertices: 
\[
	\Delta^{(n)} \colon \souche \xrightarrow{\Delta} \souche\boxtimes\souche \twoheadrightarrow (\souche\boxtimes\souche)^{(n)} \ .
\]

\begin{definition}[The enriched category $\cLi\textsf{-}\allowbreak\catofgebras{\Cobar\souche}$]
The structure of the category 
$\cLi\textsf{-}\catofgebras{\Cobar\souche}$ enriched over $(\cLi, \oplus)$ is defined as follows. 
\begin{description}
\item[\sc Objects] the $\Cobar \calC$-gebras $(A, \alpha)$.
\item[\sc Morphisms] the $\L_\infty$-algebras $\mathfrak{h}_{\alpha,\beta}\coloneq \left( \Hom_{\Sy}\big(\calC, \End^A_B\big), \{\mathcal{l}_n\}_{n\geqslant 1}\right)$ of \cref{cor:HoLieAlgeStructureBIS}. 
\item[\sc Compositions] the $\infty$-morphisms 
$\Phi^{\alpha, \beta, \gamma} \colon\h_{\beta,\gamma}\oplus \h_{\alpha, \beta} \rightsquigarrow \h_{\alpha, \gamma}$ given by $
\Phi^{\alpha, \beta, \gamma}_0\coloneq 0$, $\Phi^{\alpha, \beta, \gamma}_1\coloneq 0$, and by 
	\[ 
\Phi^{\alpha, \beta, \gamma}_n\left(
f_1\odot \cdots \odot f_n
\right)\coloneq		\souche \xrightarrow{\Delta^{(n)}} (\souche\boxtimes\souche)^{(n)} 
		\xrightarrow{f_1\odot \cdots \odot f_n}
				\big(\End^A_B\oplus\End^B_C\big)^{\boxtimes 2} 
		\twoheadrightarrow 
		\End^B_C\boxtimes\End^A_B 
		\to \End^A_C \ ,
	\]
	for $n\geqslant 2$\ , where the projection keeps only the composable diagrams and where the last arrow is the usual composite of functions.
\item[\sc Units] the $\infty$-morphisms $\Upsilon^{\alpha}\colon 0 \rightsquigarrow \h_{\alpha, \alpha}$ given by 
$\Upsilon^{\alpha}_0(1)\coloneq \id_A$ and 
by $\Upsilon^{\alpha}_n\coloneq 0$, for $n\geqslant 1$\ . 
\end{description}
\end{definition}

\begin{theorem}\label{thm:EnrichCat}
Let $\calC$ be a conilpotent dg  coproperad.
\begin{enumerate}
\item The above assignment defines a category $\cLi\textsf{-}\catofgebras{\Cobar\calC}$ enriched over $(\cLi, \oplus)$. 

\item The image of the enriched category $\cLi\textsf{-}\catofgebras{\Cobar\calC}$ under the Maurer--Cartan functor is isomorphic to the category $\infty\textsf{-}\catofgebras{\Cobar\souche}$.
\end{enumerate}
\end{theorem}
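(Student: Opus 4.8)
The plan is to handle the two assertions separately: reduce the enriched-category axioms of Part~(1) to the comonad (co)associativity and counitality of $\calC$, and then deduce Part~(2) by pushing the whole structure through the Maurer--Cartan functor. Throughout I would exploit that every operation in sight---the brackets $\mathcal{l}_n=\DD_{n+1}-\UU_{n+1}$, the direct-sum operations on $\h_{\beta,\gamma}\oplus\h_{\alpha,\beta}$, the components $\Phi^{\alpha,\beta,\gamma}_n$ (built from $\Delta^{(n)}$), and the unit $\Upsilon^\alpha$---is assembled solely from the decomposition map of $\calC$ and from the composition of the endomorphism properads, with signs already pinned down in \cref{lem:d2=0}. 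This makes the axioms combinatorial identities about decorated graphs.

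First I would check that each $\Phi^{\alpha,\beta,\gamma}$ and each $\Upsilon^\alpha$ is genuinely an $\infty$-morphism of curved $\L_\infty$-algebras, i.e. satisfies \cref{eq:CurvedLinfty}. Because the source carries the direct-sum structure, which has no mixed brackets, expanding both sides gives two sums over decorated $2$-level graphs: on the left a pure bracket $\mathcal{m}_q$ in one summand feeds into $\Phi_{p+1}$, while on the right the target bracket $\mathcal{l}_k$ glues several $\Phi_{i_j}$ together. The coassociativity of the comonadic decomposition of $\calC$ matches these two families term by term, the Koszul and desuspension signs being exactly those fixed in \cref{lem:d2=0}. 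The same bookkeeping shows $\Upsilon^\alpha$ is an $\infty$-morphism, its sole nonzero component $\Upsilon^\alpha_0(1)=\id_A$ being a Maurer--Cartan element precisely because $\id_A$ is a strict self-morphism of $(A,\alpha)$.

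Next I would verify associativity and unitality of the enriched composition. For four gebras, the two iterated composites $\Phi^{\alpha,\gamma,\delta}\circledcirc(\id\oplus\Phi^{\alpha,\beta,\gamma})$ and $\Phi^{\alpha,\beta,\delta}\circledcirc(\Phi^{\beta,\gamma,\delta}\oplus\id)$, computed with the composition formula for curved $\infty$-morphisms, unfold into the same sum over decorated $3$-level graphs whose bottom, middle and top vertices are labelled by the three composed morphisms; their equality is exactly the coassociativity of $\Delta$ iterated as $\Delta^2\colon\calC\to\calC\boxtimes\calC\boxtimes\calC$. Unitality, namely $\Phi^{\alpha,\alpha,\beta}\circledcirc(\id\oplus\Upsilon^\alpha)=\id=\Phi^{\alpha,\beta,\beta}\circledcirc(\Upsilon^\beta\oplus\id)$, follows identically from the counit axiom $(\eps\boxtimes\id)\Delta=\id=(\id\boxtimes\eps)\Delta$, the insertion of $\Upsilon^\alpha_0(1)=\id_A$ collapsing one level of the graph. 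This settles Part~(1); it parallels the operadic enrichment of Dolgushev--Rogers, the only change being that trees are replaced by directed connected graphs.

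For Part~(2) I would apply $\MC$. The objects are unchanged, $\MC(\h_{\alpha,\beta})$ is the set of $\infty$-morphisms $(A,\alpha)\rightsquigarrow(B,\beta)$ by \cref{prop:MCInfMor}, and $\MC$ sends $\oplus$ to $\times$ on objects since a Maurer--Cartan element of a direct sum splits into a pair. The point needing care is that the $\Phi^{\alpha,\beta,\gamma}$ need not be continuous, so \cref{prop:SymMono} does not apply to the structure maps directly; instead I would check by hand that $\MC(\Phi^{\alpha,\beta,\gamma})$ is well defined, which holds because conilpotence forces $\Delta^{(n)}(c)=0$ once $n$ exceeds the coradical degree of $c$, so the series is finite on each element. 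Evaluating, $\MC(\Phi^{\alpha,\beta,\gamma})(g,f)=\sum_n\frac{1}{n!}\Phi^{\alpha,\beta,\gamma}_n\big((g,f)^{\odot n}\big)$ reassembles $\sum_n\Delta^{(n)}=\Delta$ with top level labelled by $g$ and bottom by $f$, the symmetrization cancelling the $1/n!$, so it equals $g\circledcirc f$ of \cref{def:infmor}; likewise $\MC(\Upsilon^\alpha)$ selects $\id_A$, the identity $\infty$-morphism. Hence $\MC$ carries the enriched composition and units to $\circledcirc$ and the identities, giving the isomorphism of categories. I expect the main obstacle to be the first step: verifying \cref{eq:CurvedLinfty} for $\Phi^{\alpha,\beta,\gamma}$, where one must organize the richer properadic two-level splittings and track the desuspension signs carefully enough that coassociativity of $\Delta$ produces an exact cancellation.
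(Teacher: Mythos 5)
Your proposal follows essentially the same route as the paper's proof (Lemmata~\ref{lem:L1}--\ref{lem:L3} plus the by-hand check that $\MC\left(\Phi^{\alpha,\beta,\gamma}\right)$ is well defined despite the non-continuity flagged in \cref{rem:NotCont}): coassociativity of $\Delta$ for the $\infty$-morphism axiom and for associativity, counitality for the units, and finiteness of $\Delta(c)$ to make the Maurer--Cartan series converge and reassemble into $g\circledcirc f$. The only detail you gloss over is that the vanishing of the arity-$n$ components of $\Phi^{\alpha,\alpha,\beta}\circledcirc(\id\oplus\Upsilon^\alpha)$ for $n\geqslant 2$ rests on the \emph{connectedness} of the underlying $2$-level graphs (a bottom level with $n\geqslant 2$ vertices and only identity edges on top would be disconnected), not merely on the counit axiom.
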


We split the proof of this theorem into the following lemmata. 

\begin{lemma}\label{lem:L1}
The assignment  $\Phi^{\alpha, \beta, \gamma}$ is an $\infty$-morphism of $\Linfty$-algebras.
\end{lemma}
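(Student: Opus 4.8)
The plan is to verify the defining equation \eqref{eq:CurvedLinfty} of an $\infty$-morphism directly, for the collection $\Phi^{\alpha,\beta,\gamma}$ between the source $\Linfty$-algebra $\h_{\beta,\gamma}\oplus\h_{\alpha,\beta}$ and the target $\h_{\alpha,\gamma}$, by unravelling every term and matching them through the coassociativity of the decomposition coproduct of $\calC$, exactly in the spirit of \cref{lem:d2=0}. First I would record that, since $\Phi^{\alpha,\beta,\gamma}_0=\Phi^{\alpha,\beta,\gamma}_1=0$, the equation at a given arity $n$ only involves the operations $\Phi_m$ with $2\leqslant m\leqslant n$, the source structure maps $\mathcal{l}_q$ with $q\geqslant 1$, and the target structure maps $\mathcal{k}_k$ with $k\geqslant 1$ whose arguments are themselves $\Phi$-composites of arity $\geqslant 2$. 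Moreover, because the source is the direct sum $\h_{\beta,\gamma}\oplus\h_{\alpha,\beta}$ with trivial mixed brackets, each occurrence of a source bracket $\mathcal{l}_q$ acts either on arguments of $\h_{\beta,\gamma}$ (the ``$g$-type'' inputs) or on arguments of $\h_{\alpha,\beta}$ (the ``$f$-type'' inputs), never mixing the two.

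Next I would expand every $\DD$ and $\UU$ through the infinitesimal decomposition maps $\Cop{(*)}{}$ and $\Cop{}{(*)}$, and every $\Phi_n$ through $\Delta^{(n)}$, using the formulae of \cref{cor:HoLieAlgeStructureBIS}. Each resulting summand is then a composite of iterated (infinitesimal) decompositions of $\calC$ whose vertices are labelled by the $f$'s, the $g$'s, and exactly one of the structure operations $\alpha$, $\beta$, $\gamma$, followed by the relevant composition of endomorphisms. I would classify these summands by the position of that unique structure-vertex: (i) the terms carrying $\alpha$, which on the left arise only from the $\DD$-part of the $\mathcal{l}^{\alpha,\beta}_q$ and on the right only from the $\DD$-part of the $\mathcal{k}^{\alpha,\gamma}_k$; (ii) the terms carrying $\gamma$, arising on the left only from the $\UU$-part of the $\mathcal{l}^{\beta,\gamma}_q$ and on the right only from the $\UU$-part of the $\mathcal{k}^{\alpha,\gamma}_k$; (iii) the ``middle'' terms carrying $\beta$, which arise on the left both from the $\UU$-part of the $\mathcal{l}^{\alpha,\beta}_q$ and from the $\DD$-part of the $\mathcal{l}^{\beta,\gamma}_q$, and do not occur at all on the right; and (iv) the purely internal terms coming from the $\partial$ summand of $\mathcal{l}_1$ and $\mathcal{k}_1$.

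For the classes (i) and (ii), the coassociativity of $\Delta$ — used in the same guise as in the proof of \cref{lem:d2=0} — identifies the left-hand and right-hand sums: decomposing off the $\alpha$-vertex (resp. the $\gamma$-vertex) and then performing the $\Phi$-composition agrees with first composing and then extracting that vertex. For class (iii) the two left-hand contributions are supported, again by coassociativity, on exactly the same decompositions of $\calC$ — with the $f$-labels and the $g$-labels separated by the single $\beta$-vertex — while they enter with opposite signs, $+$ from the $\DD$-part of $\mathcal{l}^{\beta,\gamma}_q$ and $-$ from the $\UU$-part of $\mathcal{l}^{\alpha,\beta}_q$; hence they cancel, which accounts for the absence of any $\beta$ on the right. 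Finally, class (iv) reduces to the statement that $\Phi$ intertwines the internal differentials, which holds because $\Delta^{(n)}$ is a chain map (the coproperad $\calC$ is differential graded) and the composition of endomorphisms obeys the Leibniz rule; no Maurer--Cartan hypothesis on $\alpha$, $\beta$, $\gamma$ is needed.

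The hard part will be purely the sign bookkeeping: one must keep track of the Koszul signs produced by the symmetrisations $\odot$, of the desuspension signs attached to $s\alpha$, $s\beta$, $s\gamma$, and of the shuffle signs in \eqref{eq:CurvedLinfty}, and then check that under coassociativity the identifications in (i)--(ii) and the cancellation in (iii) hold term by term, and not merely up to sign. As in \cref{lem:d2=0}, this is most safely organised with Sweedler-type notation for the iterated decomposition $\Delta^2$ of $\calC$; the combinatorics is the exact properadic counterpart of the operadic computation of \cite[Section~3.1]{DW14}, to which one may also appeal.
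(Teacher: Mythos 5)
Your proposal is correct and follows essentially the same route as the paper's proof: both verify \cref{eq:CurvedLinfty} by expanding everything through the (infinitesimal) decomposition maps, sorting the resulting summands by the position of the unique structure vertex ($\alpha$ on top, $\gamma$ at the bottom, $\beta$ in the middle, or the purely differential terms), matching the $\alpha$- and $\gamma$-classes across the two sides via coassociativity of $\Delta$, and cancelling the two $\beta$-contributions on the left (the $\UU$-part of $\mathcal{l}^{\alpha,\beta}$ against the $\DD$-part of $\mathcal{l}^{\beta,\gamma}$, i.e.\ the $\calC\boxtimes(\calC\boxtimes\calC)$ versus $(\calC\boxtimes\calC)\boxtimes\calC$ terms) by their opposite signs. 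The sign bookkeeping you defer is treated in the paper at exactly the same level of granularity, namely the global $+$ attached to the $\DD$-operations versus the $-$ attached to the $\UU$-operations.
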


\begin{proof}
The left-hand side of \cref{eq:CurvedLinfty}, defining the notion of an $\infty$-morphism, evaluated on elements $g_1+f_1, \ldots, g_n+f_n$ of 
$\h_{\beta,\gamma}\oplus \h_{\alpha, \beta}$ amounts to applying first 
$\Delta^{(p+1)}$ on $\calC$ to produce elements of 
$(\calC\boxtimes\calC)^{(p+1)}$, then applying 
$\Cop{(q)}{}$,\,   $-\Cop{}{(q)}$ and, $-d_\calC$ for $q=1$ to any element, and finally applying and composing the elements $f_1, \ldots, f_n$, $g_1, \ldots, g_n$, $\alpha$, $\beta$, $\gamma$, and $\partial_\End$
accordingly. This produces the following five types of terms, described from top to bottom:
\begin{itemize}
\item[$\diamond$] the composition of one $\alpha$, a level of terms among $f_1, \ldots, f_n$ and a level of terms among $g_1, \ldots, g_n$ coming from the evaluation of elements of $\calC \boxtimes (\calC \boxtimes \calC)$ made up of $n+1$ vertices and only $1$ element of $\oC$ on the top level, 
\item[$\diamond$] the composition of a level of terms among $f_1, \ldots, f_n$, 
one $\beta$,  and a level of terms among $g_1, \ldots, g_n$ coming from the evaluation of elements of 
$\calC \boxtimes (\calC \boxtimes \calC)$ made up of $n+1$ vertices and only $1$ element of $\oC$ on the middle level, 
\item[$\diamond$] the composition of a level of terms among $f_1, \ldots, f_n$, 
one $\beta$,  and a level of terms among $g_1, \ldots, g_n$ coming from the evaluation of elements of 
$(\calC \boxtimes \calC) \boxtimes \calC$ made up of $n+1$ vertices and only $1$ element of $\oC$ on the middle level, 
\item[$\diamond$] the composition of a level of terms among $f_1, \ldots, f_n$, 
a level of terms among $g_1, \ldots, g_n$, and one $\gamma$, coming from the evaluation of elements of 
$(\calC \boxtimes \calC) \boxtimes \calC$ made up of $n+1$ vertices and only $1$ element of $\oC$ on the bottom level, 
\item[$\diamond$] the term $-\sum_{i=1}^n\Phi^{\alpha, \beta, \gamma}_n\big(
(g_1+f_1)\odot \cdots \odot 
\partial(g_i+f_i)
\odot \cdots \odot (g_n+f_n)\big)$~. 
\end{itemize}
The first and third terms appear with a plus sign, since they come from the application of operations $\DD_{q+1}$ and the second and fourth terms appear with a minus sign, since they come from the application of operations $\UU_{q+1}$. Therefore, the coassociativity of the coproduct of the coproperad $\calC$ implies that the second terms cancel with the third terms. 

\medskip

Similarly the right-hand side of \cref{eq:CurvedLinfty} evaluated on elements $g_1+f_1, \ldots, g_n+f_n$ of 
$\h_{\beta,\gamma}\oplus \h_{\alpha, \beta}$ amounts to applying first 
$\Cop{(k)}{}$,\,   $-\Cop{}{(k)}$, and $-d_\calC$ for $k=1$, then applying $\Delta^{(i_1)}, \ldots, \Delta^{(i_k)}$ to the bottom terms produced by $\Cop{(k)}{}$ and to the top terms produced by $-\Cop{}{(k)}$
and finally applying and composing the elements $f_1, \ldots, f_n$, $g_1, \ldots, g_n$, $\alpha$, $\gamma$, and $\partial_\End$
accordingly. This produces the following three types of terms:
\begin{itemize}
\item[$\diamond$] the composition of one $\alpha$, a level of terms among $f_1, \ldots, f_n$ and a level of terms among $g_1, \ldots, g_n$ coming from the evaluation of elements of $(\calC \boxtimes \calC) \boxtimes \calC$ made up of $n+1$ vertices and only $1$ elements of $\oC$ on the top level, 
\item[$\diamond$] the composition of a level of terms among $f_1, \ldots, f_n$, 
a level of terms among $g_1, \ldots, g_n$, and one $\gamma$, coming from the evaluation of elements of 
$\calC \boxtimes (\calC \boxtimes \calC)$ made up of $n+1$ vertices and only $1$ elements of $\oC$ on the bottom level, 
\item[$\diamond$] the term 
$-\partial\left(\Phi^{\alpha, \beta, \gamma}_n\big(
(g_1+f_1)\odot \cdots \odot 
(g_n+f_n)\big)\right)$~. 
\end{itemize}
The two types of first terms come with a plus sign, since they are both produced by applying operations $\DD$, and agree by the coassociativity of the coproduct of the coproperad $\calC$, so do the terms of third type above and the terms of second type below, both coming with a minus sign, they are both produced by applying operations $\UU$.
We claim that 
\[\sum_{i=1}^n\Phi^{\alpha, \beta, \gamma}_n\big(
(g_1+f_1)\odot \cdots \odot 
\partial(g_i+f_i)
\odot \cdots \odot (g_n+f_n)\big)=\partial\left(\Phi^{\alpha, \beta, \gamma}_n\big(
(g_1+f_1)\odot \cdots \odot 
(g_n+f_n)\big)\right)
~.\]
The differential $\partial$ of the mapping space from the coproperad $\calC$ to an endomorphism space $\End$ is equal to 
$\partial(f) = \partial_\End \circ f - (-1)^{|f|} f \circ d_\calC$. In the above displayed relation, 
the terms coming on both sides from $\partial_\End$ agrees since it is a derivation on $\End$ and 
the terms coming from $\d_\calC$ agrees since it is a coderivation of the coproperad $\calC$. 
This concludes the proof of \cref{eq:CurvedLinfty}.
\end{proof}

\begin{lemma}\label{lem:L2}
	The composition $\Phi$ is associative, i.e. the following 
	 diagram of curved $\infty$-morphisms is  commutative:
	\[
		\begin{tikzcd}
			(\frakh_{\gamma,\delta}\oplus \frakh_{\beta,\gamma})\oplus \frakh_{\alpha, \beta} 
			\ar[rr, "\cong"]
			\ar[d,squiggly,"\Phi^{\,\beta, \gamma, \delta}\,\oplus\, \id"']
			& &
			\frakh_{\gamma,\delta}\oplus (\frakh_{\beta,\gamma}\oplus \frakh_{\alpha, \beta})
			\ar[d,squiggly,"\id\,\oplus\,\Phi^{\alpha, \beta,\gamma}"]
			\\
			\frakh_{\beta, \delta} \oplus \frakh_{\alpha, \beta}
			\ar[rd,squiggly,"\Phi^{\alpha, \beta,\delta}"']
			&& 
			\frakh_{\gamma, \delta} \oplus \frakh_{\alpha, \gamma}
			\ar[ld,squiggly,"\Phi^{\alpha, \gamma, \delta}"]
			\\
			& 
			\frakh_{\alpha,\delta}
			&
		\end{tikzcd}	\ .
	\]
\end{lemma}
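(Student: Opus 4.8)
The plan is to check that the two composite curved $\infty$-morphisms running around the diagram have identical Taylor components, and to trace this equality back to the coassociativity of the decomposition map $\Delta$ of the coproperad $\calC$. After identifying the two bracketings of the source via the canonical associator of $\oplus$, both composites are $\infty$-morphisms from $\frakh_{\gamma,\delta}\oplus\frakh_{\beta,\gamma}\oplus\frakh_{\alpha,\beta}$ to $\frakh_{\alpha,\delta}$, so it is enough to compare their $n$-th components for every $n\geqslant 0$.

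First I would expand each composite by means of the composition formula for curved $\infty$-morphisms together with the definition of the direct sum $\Phi\oplus\Psi$ of $\infty$-morphisms. Since $\Phi^{\bullet,\bullet,\bullet}_0=\Phi^{\bullet,\bullet,\bullet}_1=0$ while the identity $\infty$-morphism is concentrated in arity $1$, the only surviving terms on the left-hand side are those in which, on each block of a partition $i_1+\cdots+i_k=n$, the inner morphism $\Phi^{\beta,\gamma,\delta}\oplus\id$ contributes either a single input routed through $\id$ into the bottom summand $\frakh_{\alpha,\beta}$ (a block of size $1$) or a genuine composite $\Phi^{\beta,\gamma,\delta}_{i_j}$ into the top summand $\frakh_{\beta,\delta}$ (a block of size $i_j\geqslant 2$); these are then assembled by $\Phi^{\alpha,\beta,\delta}$. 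Symmetrically, on the right-hand side the single inputs are routed through $\id$ into the top summand $\frakh_{\gamma,\delta}$ and the blocks of size $\geqslant 2$ are produced by $\Phi^{\alpha,\beta,\gamma}_{i_j}$ into the bottom summand $\frakh_{\alpha,\gamma}$, before being assembled by $\Phi^{\alpha,\gamma,\delta}$.

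Next I would make the decomposition maps explicit. Each application of a composition operation $\Phi^{\bullet,\bullet,\bullet}_m$ inserts one projection $\Delta^{(m)}$ of the decomposition map of $\calC$ onto $2$-level graphs, followed by the evaluation of the relevant Taylor components and the projection onto composable diagrams $\End\boxtimes\End\to\End$. Iterating two such operations therefore realises one of the two canonical projections of the iterated decomposition $\calC\to\calC\boxtimes\calC\boxtimes\calC$ onto $3$-level connected graphs with $n$ total vertices: the left-hand composite produces $(\Delta\boxtimes\id)\circ\Delta$, first isolating the bottom level $\frakh_{\alpha,\beta}$ and then splitting the remaining top two levels by the inner $\Delta$ applied to each upper vertex, whereas the right-hand composite produces $(\id\boxtimes\Delta)\circ\Delta$, first isolating the top level $\frakh_{\gamma,\delta}$ and then splitting the remaining bottom two levels. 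On both sides the projection onto composable diagrams retains exactly the $3$-level graphs whose bottom, middle and top vertices are labelled by elements of $\End^A_B$, $\End^B_C$ and $\End^C_D$; since the summation over partitions in the composition formula reproduces the sum over all such graphs, the two collections of labelled graphs coincide.

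Finally, the coassociativity of the decomposition coproduct of $\calC$, that is $(\Delta\boxtimes\id)\circ\Delta=(\id\boxtimes\Delta)\circ\Delta$, identifies the two iterated decompositions term by term, hence the two $n$-th components after evaluation and composition of functions. I expect the main obstacle to be not this structural identity but the accompanying bookkeeping: one must verify that the Koszul signs created by the symmetrisations $\odot$ and by the shuffle sums $\mathrm{Sh}^{-1}_{i_1,\ldots,i_k}$ appearing in the composition formula agree on the two sides. This holds because the symmetrisation over all $n$ inputs and the grouping dictated by the two nested partitions are matched precisely by the two bracketings of the iterated decomposition, so that each composable $3$-level graph contributes with the same sign on each side; collecting these term-wise equalities over all $n$ yields the commutativity of the diagram.
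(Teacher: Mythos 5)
Your proposal is correct and follows essentially the same route as the paper's proof: expand both composites via the composition formula for curved $\infty$-morphisms, observe that the vanishing of the arity-$0$ and arity-$1$ components of $\Phi^{\bullet,\bullet,\bullet}$ together with the form of $\oplus$ leaves exactly the terms realising the two iterated decompositions $(\Delta\boxtimes\id)\Delta$ and $(\id\boxtimes\Delta)\Delta$ projected onto composable $3$-level graphs, and conclude by coassociativity of the coproduct of $\calC$. The paper treats the sign bookkeeping implicitly rather than flagging it as you do, but there is no substantive difference.
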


\begin{proof}
Under the canonical isomorphism, we have to prove that 
\[\Phi^{\alpha, \beta, \delta}\,\circledcirc \left(\Phi^{\, \beta, \gamma, \delta}\,\oplus\, \id_{\h_{\alpha,\beta}}\right)
=\Phi^{\alpha, \gamma, \delta}\,\circledcirc \left(\id_{\h_{\gamma, \delta}} \,\oplus\, \Phi^{\alpha, \beta, \gamma}\right)~.  \]
Let us first notice that the curved $\infty$-morphism $\Phi^{\, \beta, \gamma, \delta}\,\oplus\, \id_{\h_{\alpha,\beta}}$
is equal to 
\begin{align*}
&\left(\Phi^{\, \beta, \gamma, \delta}\,\oplus\, \id_{\h_{\alpha,\beta}}\right)_0(1)=0\ , \quad 
\left(\Phi^{\, \beta, \gamma, \delta}\,\oplus\, \id_{\h_{\alpha,\beta}}\right)_1(h_1+g_1+f_1)=f_1\ , \quad \text{and} \\
&\left(\Phi^{\, \beta, \gamma, \delta}\,\oplus\, \id_{\h_{\alpha,\beta}}\right)_n(h_1+g_1+f_1, \ldots, h_n+g_n+f_n)=
\Phi^{\, \beta, \gamma, \delta}_n(h_1+g_1, \ldots, h_n+g_n)
\ , \ \text{for} \ n\geqslant 2\ ,
\end{align*}
and for elements $h_1+g_1+f_1, \ldots, h_n+g_n+f_n$ in 
$\h_{\gamma, \delta}\oplus \h_{\beta,\gamma}\oplus \h_{\alpha, \beta}$~. 
The curved $\infty$-morphism $\id_{\h_{\gamma, \delta}} \,\oplus\, \Phi^{\alpha, \beta, \gamma}$ satisfies a similar formula. 
The evaluation of the left-hand side on elements $h_1+g_1+f_1, \ldots, h_n+g_n+f_n$ of 
$\h_{\gamma, \delta}\oplus \h_{\beta,\gamma}\oplus \h_{\alpha, \beta}$ gives the composite 
\begin{multline*}
\calC \xrightarrow{(\Delta \boxtimes \id_\calC)\Delta} \big((\calC\boxtimes\calC)\boxtimes \calC\big)^{(n)}
		\xrightarrow{(h_1+g_1+f_1)\odot \cdots \odot (h_n+g_n+f_n)}
				\big(\End^A_B\oplus\End^B_C\oplus\End^C_D\big)^{\boxtimes 3} 
\\		\twoheadrightarrow 
		\End^C_D\boxtimes \End^B_C\boxtimes\End^A_B 
		\to \End^A_D 
\end{multline*}
and the evaluation of the right-hand side of the same elements gives a similar composite except that the first map is now the other iteration $(\id_\calC \boxtimes \Delta)\Delta$
of the coproduct of the coproperad $\calC$. So the coassociativity of this coproduct concludes the proof. 

\end{proof}

One can directly see that the assignment $\Upsilon^{\alpha}$ defines a curved $\infty$-morphism of $\Linfty$-algebras.

\begin{lemma}\label{lem:L3}
	The composition $\Phi$ is unital with respect to $\Upsilon$, i.e. the following 
	 diagram of curved $\infty$-morphisms is  commutative:
	 	\[
		\begin{tikzcd}[column sep=large]
			0 \oplus \frakh_{\alpha, \beta}
			\ar[rd, "\cong"]
			\ar[d,squiggly,"\Upsilon^{\beta}\,\oplus\, \id"']
			& &
			 \frakh_{\alpha, \beta}\oplus 0\ar[ld, "\cong"']
			\ar[d,squiggly,"\id\,\oplus\,\Upsilon^{\alpha}"]
			\\
			\frakh_{\beta, \beta} \oplus \frakh_{\alpha, \beta}
			\ar[r,squiggly,"\Phi^{\alpha, \beta, \beta}"']
			& \h_{\alpha, \beta}& 
			\frakh_{\alpha, \beta} \oplus \frakh_{\alpha, \alpha}
			\ar[l,squiggly,"\Phi^{\alpha, \alpha, \beta}"]
		\end{tikzcd}	\ .
	\]
\end{lemma}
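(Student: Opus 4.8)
The plan is to verify each of the two commutative triangles by computing the components of the relevant composite curved $\infty$-morphisms, using the explicit composition and direct-sum formulas together with the counit axiom of $\calC$. I would treat the left triangle in detail, the right one being symmetric. Unfolding the direct sum, the curved $\infty$-morphism $\Upsilon^\beta \oplus \id_{\h_{\alpha,\beta}}$ has only two nonzero components on $0 \oplus \h_{\alpha,\beta}$: its constant term $\Upsilon^\beta_0(1)\oplus 0 = \id_B \oplus 0$, which I denote $u$ and which lies in $\h_{\beta,\beta}$, and its linear term $(\Upsilon^\beta \oplus \id)_1(0\oplus f) = 0 \oplus f$; all higher components vanish since both $\Upsilon^\beta_{\geqslant 1}$ and $\id_{\geqslant 2}$ do. Feeding this into the composition formula for $\Phi^{\alpha,\beta,\beta}$, only blocks of size $0$ and $1$ contribute, so that on $f_1 \odot \cdots \odot f_n \in \h_{\alpha,\beta}$ one obtains
\[
\big(\Phi^{\alpha,\beta,\beta} \circledcirc (\Upsilon^\beta \oplus \id)\big)_n(f_1 \odot \cdots \odot f_n) = \sum_{r \geqslant 0} \tfrac{1}{r!}\, \Phi^{\alpha,\beta,\beta}_{n+r}\big(f_1 \odot \cdots \odot f_n \odot u^{\odot r}\big)~,
\]
where the $r$ copies of $u$ fill top vertices and the $f_j$ fill bottom vertices of the two-level graphs produced by $\Delta^{(n+r)}$.

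The next step is to exploit that $u = \id_B$ is supported on the counit summand $\I \subset \calC$: a top vertex survives only if it is labelled by $\I$, i.e. it is an identity strand, every $\oC$-labelled top vertex being sent to zero. The key structural point is then a connectivity argument for the underlying two-level connected graphs. For $n \geqslant 2$, a two-level graph whose whole top level consists of identity strands sitting over $n \geqslant 2$ genuine bottom vertices is necessarily disconnected (each $(1,1)$-ary identity links a single bottom output to a single global output, so it cannot merge strands), hence it does not belong to $\calC \boxtimes \calC$ and all such terms vanish; thus the arity-$n$ component is $0$ for $n \geqslant 2$. For $n = 0$ there is no bottom vertex labelled in $\End^A_B$, so the composable projection onto $\End^B_B \boxtimes \End^A_B$ annihilates every term and the arity-$0$ component vanishes. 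This leaves the arity-one component, which must be shown to equal the identity so as to match the canonical isomorphism $0 \oplus \h_{\alpha,\beta} \cong \h_{\alpha,\beta}$, whose only nonzero component is the identity at arity one.

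For the arity-one component, evaluated on $c \in \calC(p,q)$, the surviving two-level graphs have one bottom vertex carrying $c$ and exactly $p$ identity strands on top (one over each output of $c$), which forces $r = p$. The factor $\tfrac{1}{r!}=\tfrac{1}{p!}$ coming from the $p$ indistinguishable empty blocks is exactly compensated by the $p!$ ways of distributing the symmetrized copies of $u$ over the $p$ top strands, so the total coefficient is $1$; equivalently, summing over all two-level splittings amounts to applying the left counit relation $(\eps \boxtimes \id)\Delta = \id_\calC$ of the coaugmented coproperad $\calC$ to the top factor, which collapses the expression to $f(c)$. Hence the arity-one component is $\id$ and the left triangle commutes. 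The right triangle is obtained by the mirror argument, inserting $\Upsilon^\alpha$ with $\id_A$ supported on $\I$ at the bottom and invoking instead the right counit relation $(\id \boxtimes \eps)\Delta = \id_\calC$.

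I expect the main obstacle to be the bookkeeping of this arity-one coefficient: one must check that the symmetrization factor $\tfrac{1}{r!}$ attached to the empty blocks in the composition of curved $\infty$-morphisms cancels precisely against the combinatorial multiplicity of distributing the identity insertions over the top strands, so that the counit relation is recovered with coefficient exactly $1$ and not a spurious scalar. By contrast, the vanishing of the higher and of the zeroth components is a clean formal consequence of the connectivity of two-level graphs together with $u$ being concentrated on the counit summand $\I$.
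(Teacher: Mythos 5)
Your proposal is correct and follows essentially the same route as the paper's proof: unfold the composition formula, observe that only the constant and linear components of the unit-side morphism survive, kill the arity-$0$ part trivially, kill the arities $n\geqslant 2$ by the connectedness of $2$-level graphs whose identity-labelled level cannot join $n\geqslant 2$ genuine vertices, and recover the identity in arity $1$ from the counit of the coaugmented coproperad. The only differences are cosmetic: you treat the left triangle where the paper details the right one, and you make explicit the cancellation of the symmetrization coefficient that the paper handles implicitly by counting the relevant summands of $\Delta^{(k+1)}$ directly.
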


\begin{proof}
Let us prove in details the equality $\Phi^{\alpha, \alpha, \beta}\cc\left(
\id_{\h_{\alpha, \beta}}\oplus \Upsilon^{\alpha}
\right)=\id_{\h_{\alpha,\beta}}$, the other one, namely 
$\Phi^{\alpha, \beta, \beta}\cc\left(
\Upsilon^{\beta}\oplus \id_{\h_{\alpha, \beta}}
\right)=\id_{\h_{\alpha,\beta}}$, is proved with the same kind of arguments. 
The curved $\infty$-morphism $\id_{\h_{\alpha, \beta}}\oplus \Upsilon^{\alpha}$ is equal to 
\begin{align*}
&\left(\id_{\h_{\alpha, \beta}}\oplus \Upsilon^{\alpha}\right)_0(1)=\id_A\ , \quad 
\left(\id_{\h_{\alpha, \beta}}\oplus \Upsilon^{\alpha}\right)_1(f)=f\ , \ \text{for} \ f\in \h_{\alpha, \beta}\ , \quad \text{and} \\
&\left(\id_{\h_{\alpha, \beta}}\oplus \Upsilon^{\alpha}\right)_n=0\ , \ \text{for} \ n\geqslant 2\ .
\end{align*}

Since $\Phi^{\alpha, \alpha, \beta}_k\big(\id_A^{\odot k}\big)=0$, for any $k\geqslant 0$, 
the component of arity $n=0$ of the left-hand side vanishes. 
Let us show that the component of arity $n=1$ is equal to the identity. 
For any $f\in \h_{\alpha, \beta}$, we have 
\[\left(\Phi^{\alpha, \alpha, \beta}\cc\left(
\id_{\h_{\alpha, \beta}}\oplus \Upsilon^{\alpha}
\right)\right)_1(f)=
\sum_{k\geqslant 0}\Phi^{\alpha, \alpha, \beta}_{k+1} \big(\id_A^{\odot k}\odot f\big)\ .
\]
The image of an element of $\calC$ under $\Phi^{\alpha, \alpha, \beta}_{k+1} \big(\id_A^{\odot k-1}\odot f\big)$ 
amounts to applying $\Delta^{(k+1)}$, which produces 2-level connected graphs with $k+1$ vertices labeled by $\calC$, keeping only $k$ elements of $\I\subset \calC$ at the top, that we replace by $\id_A$, and one element of $\calC(m,k)$ at the bottom, to which we apply $f(m,k)$, and finally composing the appearing functions. This shows that 
$\sum_{k\geqslant 0}\Phi^{\alpha, \alpha, \beta}_{k+1} \big(\id_A^{\odot k}\odot f\big)=f$\ .
In the end, we claim that the component of arity $n\geqslant 2$ vanishes. 
The component of arity $n$ of the composite of these two curved $\infty$-morphisms is equal to 
\[\left(\Phi^{\alpha, \alpha, \beta}\cc
\left(\id_{\h_{\alpha, \beta}}\oplus \Upsilon^{\alpha}\right)
\right)_n=
 \sum_{i_1+\cdots+i_k=n}
\Phi^{\alpha, \alpha, \beta}_k\circ \left(\left(\id_{\h_{\alpha, \beta}}\oplus \Upsilon^{\alpha}\right)_{i_1}, 
\ldots, \left(\id_{\h_{\alpha, \beta}}\oplus \Upsilon^{\alpha}\right)_{i_k}\right)
\ .\]
Since $\left(\id_{\h_{\alpha, \beta}}\oplus \Upsilon^{\alpha}\right)_i=0$, for $i\geqslant 2$, this composite is actually equal to 
\[\left(\Phi^{\alpha, \alpha, \beta}\cc
\left(\id_{\h_{\alpha, \beta}}\oplus \Upsilon^{\alpha}\right)
\right)_n(f_1\odot\cdots \odot f_n)=
\sum_{k\geqslant 0}\Phi^{\alpha, \alpha, \beta}_{k+n} \big(\id_A^{\odot k}\odot f_1\odot\cdots \odot f_n\big)\ .
\]
The image of an element of $\calC$ under $\Phi^{\alpha, \alpha, \beta}_{k+n} \big(\id_A^{\odot k}\odot f_1\odot\cdots \odot f_n\big)$ 
amounts to applying $\Delta^{(k+n)}$ which produces 2-level connected graphs with $k+n$ vertices labeled by $\calC$, keeping $k$ elements of $\I\subset \calC$ at the top, that we replace by $\id_A$, and $n$ elements of $\calC$ at the bottom, to which we apply $f_1\odot\cdots \odot f_n$, and finally composing the appearing functions. But for $n\geqslant 2$, this is impossible since this would force the underlying graphs to be disconnected. 
\end{proof}

\begin{remark}\label{rem:NotCont}
Before proving \cref{thm:EnrichCat}, let us emphasize the following crucial point: the composition $\infty$-morphisms are \emph{not}  continuous since 
$\Phi^{\alpha,\alpha, \alpha}_2(\id_A, \id_A)$ lives in $\scrF_1 \h_{\alpha, \alpha}$ but not in $\scrF_2 \h_{\alpha, \alpha}$~, for $\id_A\in 
\scrF_1 \h_{\alpha, \alpha}$~.
Moreover, there is not way to fix this: even if one wishes to consider another filtration of the $\Linfty$-algebras $\h_{\alpha,\beta}$~, the identities $\id_A$ have to sit in $\scrF_1  \h_{\alpha, \alpha}$
since 
$\id_A=\Upsilon^{\alpha}_0(1)\in \scrF_1  \h_{\alpha, \alpha}$~. 
This failure is not dramatic as we will see  in the proof below. 
\end{remark}

\begin{proof}[Proof of \cref{thm:EnrichCat}]\leavevmode
\begin{enumerate}
\item Lemmata~\ref{lem:L1} (well-defined), \ref{lem:L2} (associativity), and \ref{lem:L3} (unitality) 
prove the various axioms for the category $\cLi\textsf{-}\catofgebras{\Cobar\calC}$ to be enriched over $(\cLi, \oplus)$. 

\item \cref{prop:MCInfMor} shows that the spaces of morphisms, like $\h_{\alpha, \beta}$, of the former enriched category 
are sent to the sets of $\infty$-morphisms, from $(A, \alpha)$ to $(B, \beta)$. 
Since the composition $\infty$-morphisms $\Phi^{\alpha, \beta, \gamma}$ are not continuous, see \cref{rem:NotCont}, we cannot simply apply Point (2) of \cref{prop:SymMono}: we first have to show that 
$\MC\left(\Phi^{\alpha, \beta, \gamma}\right)$ is well-defined. 
For any $f\in \MC(\h_{\alpha, \beta})$ and any $g\in \MC(\h_{\beta, \gamma})$, 
the term ${\textstyle \frac{1}{n!}}
\Phi^{\alpha, \beta, \gamma}_n(g+f, \ldots, g+f)$ is equal to the summand of $g \circledcirc f$ 
corresponding to the application of $\Delta^{(n)}$. Since the image of any $c\in \calC$ under the decomposition map $\Delta$ is made up of a finite sum of terms, we have 
\[
\MC\left(\Phi^{\alpha, \beta, \gamma}\right)(g+f)(c)=\sum_{n\geqslant 2}^{N}{\textstyle \frac{1}{n!}}
\Phi^{\alpha, \beta, \gamma}_n(g+f, \ldots, g+f)(c)=(g \circledcirc f)(c)~,\]
for some $N\in \NN$. 
This shows that the image of  the $\infty$-morphism 
$\Phi^{\alpha, \beta, \gamma} \colon\h_{\beta,\gamma}\oplus \h_{\alpha, \beta} \rightsquigarrow \h_{\alpha, \gamma}$ of 
$\Linfty$-algebras under 
the Maurer--Cartan functor is well-defined and produces the composite $\circledcirc$ of $\infty$-morphisms of $\Omega \calC$-gebras. 
It remains to see that the image of the $\infty$-morphism 
$\Upsilon^{\alpha}\colon 0 \rightsquigarrow \h_{\alpha, \alpha}$ of 
$\Linfty$-algebras under 
the Maurer--Cartan functor is equal to the identity of $A$, that is
\[
\MC\left(\Upsilon^{\alpha}\right)(0)=\sum_{n\geqslant 0}{\textstyle \frac{1}{n!}}
\Upsilon^{\alpha}_n(0, \ldots, 0)=\Upsilon^{\alpha}_0(1)=\id_A~.\]
\end{enumerate}
\end{proof}

\section{Simplicial enrichment}
Following the method introduced by Dolgushev--Hoffnung--Rogers \cite{DolgushevHoffnungRogers14} on the operadic level, we apply the Deligne--Hinich integration functor of $\Linfty$-algebras to further enrich the category of $\Omega \calC$-gebras and $\infty$-morphisms with a simplicial structure. All the arguments of \emph{loc. cit.} apply \emph{mutatis mutandis} on the properadic level except at a certain key point where the fact that we are working with algebraic structures with possibly several outputs breaks the flow. 
In order to bypass this failure, we introduce other types of methods, based on the homotopy properties (model structures) of morphisms 2-colored dg properads. In the end, this allows us to show that the localisations of the category of $\Omega \calC$-gebras 
at quasi-isomorphisms and 
at $\infty$-quasi-isomorphisms both given by the homotopy category of its simplicial enrichment.

\subsection{Integration of curved $\Linfty$-algebras}\label{subsec:Integration}
The polynomial differential forms on the geometrical simplicies forms a simplicial dg commutative algebra $\Omega_\bullet$  due to D. Sullivan in \cite{Sullivan77}. 
Recall that the (complete) tensor product with a dg commutative algebra produces an endofunctor of the category of curved $\Linfty$-algebras. 

\begin{definition}[Deligne--Hinich integration functor]
The \emph{Deligne--Hinich integration functor} is defined by the following composite 
\[
\begin{tikzcd}[column sep=large]
\MC_\bullet \ \colon \ (\mathsf{cont}\textsf{-}\catofalgebras{\cLi},\oplus) \ar[r, "- \widehat{\otimes}\, \Omega_\bullet"]& 
(\mathsf{cont}\textsf{-}\catofalgebras{\scLi},\oplus)  \ar[r, "\MC"]& 
(\sSet, \times) \ ,
\end{tikzcd}
\]
where $\mathsf{cont}\textsf{-}\catofalgebras{\scLi}$ stands for the category of simplicial curved $\Linfty$-algebras with their simplicial continuous $\infty$-morphisms, that is 
\[\MC_\bullet (\frakg)\coloneqq \MC \left(\frakg\, \widehat{\otimes}\,  \Omega_\bullet\right)~.\]
\end{definition}

\begin{theorem}[\cite{Hinich97, RocaiLucio24}]\leavevmode
\begin{enumerate}
\item The Deligne--Hinich integration functor is strong symmetric monoidal. 

\item The image of the Deligne--Hinich integration functor lies in the subcategory of Kan complexes. 
\end{enumerate}
\end{theorem}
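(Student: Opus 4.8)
The plan is to establish the two points separately, exploiting the fact that $\MC_\bullet$ is by construction the composite of the functor $-\wo \Omega_\bullet$ with the (level-wise) Maurer--Cartan functor.

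For Point~(1), I would use that a composite of strong symmetric monoidal functors is again strong symmetric monoidal. The second factor is the level-wise Maurer--Cartan functor; it is strong symmetric monoidal into $(\sSet,\times)$ because it is so level-wise into $(\Set,\times)$ by Point~(2) of \cref{prop:SymMono}, the two monoidal structures being computed level-wise. It thus remains to treat the first factor $-\wo \Omega_\bullet$ from $(\cLi, \oplus)$ to $(\scLi, \oplus)$. As the monoidal products on both sides are direct sums, the required structural natural isomorphism is the distributivity of the complete tensor product over finite direct sums,
\[
(\frakg \oplus \frakh)\wo \Omega_\bullet \cong (\frakg \wo \Omega_\bullet)\oplus (\frakh \wo \Omega_\bullet)~,
\]
together with $0\wo \Omega_\bullet \cong 0$ for the units. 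One checks that this isomorphism respects the curved $\Linfty$-structures: by the definition of $\oplus$ recalled above, the structure operations act block-diagonally on a direct sum, and tensoring with the commutative algebra $\Omega_\bullet$ preserves this block-diagonal shape, so that the coherence and symmetry constraints follow from naturality. This step is routine.

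Point~(2) is the substantive statement, going back to Hinich \cite{Hinich97} and extended to the complete curved framework in \cite{RocaiLucio24}. To show that $\MC_\bullet(\frakg)=\MC\left(\frakg \wo \Omega_\bullet\right)$ is a Kan complex, I would verify the horn-filling property directly: a map $\Lambda^n_k \to \MC_\bullet(\frakg)$ is the same datum as a Maurer--Cartan element in $\frakg \wo \Omega_{\Lambda^n_k}$, where $\Omega_{\Lambda^n_k}$ denotes the polynomial forms on the horn, and filling it amounts to lifting this element along the restriction $\frakg \wo \Omega_n \twoheadrightarrow \frakg \wo \Omega_{\Lambda^n_k}$ to a Maurer--Cartan element in $\frakg \wo \Omega_n$.

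The key geometric input is that the restriction map $\Omega_n \twoheadrightarrow \Omega_{\Lambda^n_k}$ is a surjection whose kernel is an acyclic dg commutative algebra carrying an explicit contracting homotopy. Combining this homotopy with the completeness of the canonical filtration on $\frakg$ established in \cref{prop:CompHoLieAlgeStructure}, one solves the Maurer--Cartan equation by induction along the filtration: at each associated-graded stage the obstruction to extending the lift lies in a space computed from the acyclic kernel and hence vanishes, so a compatible lift always exists, and completeness guarantees that the resulting inductive procedure converges to a genuine Maurer--Cartan element. The main obstacle is exactly this inductive solving in the \emph{curved} and \emph{complete} setting: the curvature $\mathcal{l}_0$ has to be kept under control, but the hypothesis $\g_n=\scrF_1\g_n$ forces $\mathcal{l}_0\in\scrF_1$ and ensures that every series in sight converges, so that the arguments of \cite{Hinich97, RocaiLucio24} apply \emph{mutatis mutandis}.
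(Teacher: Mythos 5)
Your proposal is correct and matches the paper's approach: Point~(1) is handled in both cases by observing that $\MC_\bullet$ is the composite of two strong symmetric monoidal functors (tensoring with $\Omega_\bullet$, which distributes over $\oplus$, followed by the level-wise Maurer--Cartan functor of \cref{prop:SymMono}). For Point~(2) the paper simply defers to \cite{Hinich97}, \cite{Getzler09}, and \cite{RocaiLucio24}, whereas you sketch the standard horn-filling argument via the contracting homotopy on the kernel of $\Omega_n\twoheadrightarrow\Omega_{\Lambda^n_k}$ and induction along the complete filtration --- which is precisely the argument of those references adapted to the complete curved setting, so nothing is lost or gained.
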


\begin{proof}\leavevmode
\begin{enumerate}
\item It is straightforward to check that tensoring with a dg commutative algebra produces a strong symmetric functor. 
The strong symmetric monoidal functor $\MC$ (\cref{prop:SymMono}) induces a strong symmetric monoidal functor on the simplicial level 
since the monoidal structure on simplicial curved $\Linfty$-algebras is degree-wise. 

\item This is done in \cite[Proposition~2.2.3]{Hinich97} for dg Lie algebras, in 
\cite[Section~4]{Getzler09} for nilpotent $\Linfty$-algebras, and in \cite[Theorem~2.12]{RocaiLucio24} for absolute curved $\Linfty$-algebras. 
\end{enumerate}
\end{proof}

\begin{remark}
The chosen  terminology comes from the fact that the Deligne--Hinich functor generalises the classical integration functor from Lie theory which associates a group to a (nilpotent or complete) Lie algebra: it assigns here a group "up to homotopy" (Kan complexes also known as $\infty$-groupoids) to a Lie algebra "up to homotopy" (curved $\Linfty$-algebra). 
\end{remark}

One properadic example has recently been studied in details by Kontsevich--Takeda--Vlassopulous in \cite{KTV23}. 
Recall that the notion of a pre-Calabi--Yau algebra \cite{TZ07Bis, KTV25} is encoded by the (properadic) cobar construction of a codioperad  $\calC_{\mathrm{pCY}}$, see also \cite[Section~3]{LV23}. 
The homotopy type of the restriction of the $\infty$-groupoid $\MC_\bullet (\frakg_{\calC_{\mathrm{pCY}},A})$ integrating the deformation dg Lie algebra encoding pre-Calabi--Yau algebras to non-degenerate ones is described in \cite[Section~4]{KTV23} in terms of the negative cyclic homology of the smooth algebra $A$. In this direction, it would interesting to study the $\infty$-groupoid 
$\MC_\bullet \left(\frakk_{\calC_{\mathrm{pCY}},A,B}\right)$
integrating pair of pre-Calabi--Yau algebras related by $\infty$-morphisms. 

\subsection{Simplicial category of homotopy gebras}Let $\calC$ be a conilpotent dg  coproperad.

\begin{definition}[The simplicial category $\Delta\textsf{-}\catofgebras{\Cobar\calC}$]
	The simplicial category $\Delta\textsf{-}\catofgebras{\Cobar\calC}$ is made up of the $\Cobar\calC$-gebras as objects and the simplicial sets 
	\[
		\Map(\alpha, \beta) \coloneqq \MC_\bullet(\frakh_{\alpha,\beta}) 
	\]
as morphisms; the composition $U_{\alpha, \beta,\gamma}$ given by 
	\[
		U_{\alpha,\beta,\gamma}\coloneqq \MC_\bullet(\Phi^{\alpha, \beta, \gamma}) \colon 
		\Map(\beta, \gamma)\times \Map(\alpha, \beta)
		\too
		\Map(\alpha, \gamma)
	\]
	and the unit given by $\MC_\bullet\left(\Upsilon^\alpha\right)$.
\end{definition}

The same argument as in the proof of  \cref{thm:EnrichCat} holds here: even if the composition $\infty$-morphism 
$\Phi^{\alpha, \beta, \gamma}$ fails to be continuous its image under the Maurer--Cartan functor makes sense and all the axioms of a simplicial category are straightforward to check. 

\begin{proposition}\label{prop:SimplicialCat0Simpl}
The category defined by the $0$-simplicies of the simplicial category $\Delta\textsf{-}\catofgebras{\Cobar\calC}$ is isomorphic 
to the category $\infty\textsf{-}\catofgebras{\Cobar\souche}$.
\end{proposition}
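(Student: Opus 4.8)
The plan is to exploit the elementary fact that the simplicial level zero of the Sullivan forms is trivial, so that the Deligne--Hinich integration functor restricted to $0$-simplices is nothing but the plain Maurer--Cartan functor $\MC$; the statement then follows directly from \cref{prop:MCInfMor} and from the computations already carried out in the proof of \cref{thm:EnrichCat}.

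First I would record that $\Omega_0\cong \k$, concentrated in degree $0$, since the only polynomial differential forms on the point are the constants. Consequently, for any curved $\Linfty$-algebra $\frakg$ one has $\frakg\widehat{\otimes}\Omega_0\cong \frakg$, whence
\[
\MC_0(\frakg)=\MC\left(\frakg\,\widehat{\otimes}\,\Omega_0\right)\cong \MC(\frakg)\ .
\]
Applying this to $\frakg=\frakh_{\alpha,\beta}$ shows that the set of $0$-simplices of the mapping space $\Map(\alpha,\beta)=\MC_\bullet(\frakh_{\alpha,\beta})$ is exactly $\MC(\frakh_{\alpha,\beta})$. By \cref{prop:MCInfMor}, this is precisely the set of $\infty$-morphisms from $(A,\alpha)$ to $(B,\beta)$. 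Since both categories have the $\Cobar\calC$-gebras as objects, the identity-on-objects assignment together with these bijections on hom-sets is the candidate isomorphism of categories.

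Next I would check compatibility with compositions and units. On $0$-simplices the composition $U_{\alpha,\beta,\gamma}=\MC_\bullet(\Phi^{\alpha,\beta,\gamma})$ becomes $\MC(\Phi^{\alpha,\beta,\gamma})$ under the identification above. The proof of Point~(2) of \cref{thm:EnrichCat} already establishes that $\MC(\Phi^{\alpha,\beta,\gamma})(g+f)=g\circledcirc f$, so the simplicial composition on $0$-simplices coincides with the composite $\circledcirc$ of $\infty$-morphisms. Likewise, the unit $\MC_\bullet(\Upsilon^{\alpha})$ evaluates on the unique $0$-simplex to $\MC(\Upsilon^{\alpha})(0)=\Upsilon^{\alpha}_0(1)=\id_A$, matching the identity $\infty$-morphism. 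Hence the bijections assemble into a functor which is an isomorphism of categories onto $\infty\textsf{-}\catofgebras{\Cobar\souche}$.

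There is no serious obstacle here: the entire content of the proposition is that passing to $0$-simplices collapses the simplicial enrichment back to the ordinary category, which happens precisely because $\Omega_0\cong\k$. The only subtle point---already addressed in the proof of \cref{thm:EnrichCat}---is that the composition $\infty$-morphisms $\Phi^{\alpha,\beta,\gamma}$ are not continuous, so that one cannot invoke the strong monoidality of $\MC$ of \cref{prop:SymMono} directly; instead, the well-definedness of $\MC(\Phi^{\alpha,\beta,\gamma})$ relies on the finiteness of the decomposition map $\Delta$ of $\calC$, exactly as in \emph{loc. cit.}
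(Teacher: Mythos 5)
Your proof is correct and follows exactly the paper's argument: the paper also deduces the statement from the triviality of $\Omega_0\cong\k$ combined with Point~(2) of \cref{thm:EnrichCat}, which already contains the identification of hom-sets, compositions, and units that you spell out. Your additional remark about the non-continuity of $\Phi^{\alpha,\beta,\gamma}$ being handled by the finiteness of $\Delta$ is precisely the subtlety the paper defers to the proof of \cref{thm:EnrichCat}.
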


\begin{proof}
This is a straightforward corollary the fact that the dg commutative algebra $\Omega_0\cong \k$ is trivial and 
 Point (2) of  \cref{thm:EnrichCat}.
\end{proof}

\begin{remark}
There is another smaller but homotopy equivalent way to integrate $\L_\infty$-algebras into \emph{algebraic} $\infty$-groupoids using a functor $\mathrm{R}_\bullet$ which has also the advantage that the $1$-simplexes are the algebraic gauges, see \cite{Getzler09, RNV19, RocaiLucio24}. 
But this functor $\mathrm{R}_\bullet$ is so far functorial only with respect to a refinement of the notion of $\infty$-morphism.
In order to produce a simplicial category with this functor, one should refine the structure maps of the category enriched in $\L_\infty$-algebras above. 
\end{remark}

Our goal  is to prove \cref{thm:HoCatViaSimpl} which claims that the localisation 
of the category  $\infty\text{-}\catofgebras{\Cobar\calC}$ of $\Cobar\calC$-gebras with respect $\infty$-quasi-isomorphisms is equivalent to the homotopy category $\pi_0\left(\Delta\textsf{-}\catofgebras{\Cobar\calC}\right)$ of this aforementioned simplicial category. To this extend, we will need the following first homotopical properties. 

\medskip

For any $\infty$-morphism $f \colon (A, \alpha) \rightsquigarrow  (B, \beta)$ of $\Cobar\calC$-gebras, that is a Maurer--Cartan element in 
$\frakh_{\alpha,\beta}$, we consider the $\infty$-morphism $\Xi^f  \colon \frakh_{\beta, \gamma} \rightsquigarrow 
\frakh_{\beta, \gamma} \oplus \frakh_{\alpha, \beta}$
of curved $\Linfty$-algebras 
 defined by 
\[\left(\Xi^f\right)_0(1)\coloneqq f \, , \quad \left(\Xi^f\right)_1(g)\coloneqq g\ , \quad \text{and} \quad \left(\Xi^f\right)_n\coloneqq 0\]
and the  $\infty$-morphism $\Xi_f  \colon \frakh_{\gamma, \alpha} \rightsquigarrow 
\frakh_{\alpha, \beta} \oplus \frakh_{\gamma, \alpha}$ of curved $\Linfty$-algebras  defined by 
\[\left(\Xi_f\right)_0(1)\coloneqq f \, , \quad \left(\Xi_f\right)_1(g)\coloneqq g\ , \quad \text{and} \quad \left(\Xi_f\right)_n\coloneqq 0~.\]
One can directly check that they are $\infty$-morphisms or notice that they are both the categorical product of
the identity with the ``constant''  $\infty$-morphism defined by the Maurer--Cartan $f$ in the target $\frakh_{\alpha,\beta}$~.

\begin{lemma}\label{lem:pullbackpushoutinfty}
For any $\infty$-quasi-isomorphism $f \colon (A, \alpha) \stackrel{\sim}{\rightsquigarrow}  (B, \beta)$
of $\Cobar\calC$-gebras, the pullback morphism 
\[f^*\coloneqq \Phi^{\alpha, \beta,\gamma} \circledcirc \Xi^f \ \colon \frakh_{\beta, \gamma} \rightsquigarrow \frakh_{\alpha, \gamma}\]
is a filtered $\infty$-quasi-isomorphism of   $\Linfty$-algebras and the pushout morphism 
\[f_*\coloneqq \Phi^{\gamma, \alpha, \beta} \circledcirc \Xi_f \ \colon \frakh_{\gamma, \alpha} \rightsquigarrow \frakh_{\gamma, \beta}\]
is a filtered $\infty$-quasi-isomorphism of   $\Linfty$-algebras.
\end{lemma}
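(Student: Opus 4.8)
The plan is to treat $f^*$ and $f_*$ symmetrically and to reduce the statement to an associated-graded computation. First I would record that both are honest (pointed) $\infty$-morphisms of $\Linfty$-algebras rather than merely curved ones. Since $\Xi^f$ has only a $0$-component $f$ and a $1$-component $\id$, the composition formula for curved $\infty$-morphisms gives
\[
(f^*)_n(g_1\odot\cdots\odot g_n)=\sum_{k\geqslant 0}{\textstyle\frac{1}{k!}}\,\Phi^{\alpha,\beta,\gamma}_{n+k}\big(f^{\odot k}\odot g_1\odot\cdots\odot g_n\big)\ ,
\]
and the constant term $(f^*)_0(1)=\sum_{k}\frac{1}{k!}\Phi^{\alpha,\beta,\gamma}_{k}(f^{\odot k})$ vanishes, because $\Phi^{\alpha,\beta,\gamma}$ is supported on composable (mixed) two-level diagrams and so is zero on inputs all lying in the single summand $\frakh_{\alpha,\beta}$. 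By definition, being a filtered $\infty$-quasi-isomorphism then amounts to two things: that $f^*$ respects the canonical filtrations, and that the associated graded $\gr\big((f^*)_1\big)$ of its linear component is a quasi-isomorphism. The same discussion applies verbatim to $f_*$.

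The hard part is the first point, since by \cref{rem:NotCont} the composition $\infty$-morphisms $\Phi^{\alpha,\beta,\gamma}$ are themselves not continuous. The key is a density-filtration bookkeeping in the spirit of \cref{prop:CompHoLieAlgeStructure}: in each summand $\Phi^{\alpha,\beta,\gamma}_{n+k}(f^{\odot k}\odot g_1\odot\cdots)$, every bottom vertex labelled by a non-trivial element of $\oC$ strictly increases the sum of the weight and the size of the underlying graph, hence strictly raises the target filtration degree. For $n=1$ this shows that the only filtration-preserving contribution is the term in which all bottom vertices are the identity, i.e. in which each copy of $f$ is replaced by its first component $f_{(0)}$; in particular $(f^*)_1\big(\scrF_j\big)\subseteq\scrF_j$. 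For $n\geqslant 2$ the term with all bottom vertices the identity vanishes, as it would force the underlying two-level graph to be disconnected — exactly the phenomenon exploited in \cref{lem:L3} — so every surviving summand carries at least one non-identity bottom vertex and thus lands in strictly higher filtration, which is more than enough to verify the bound $(f^*)_n\big(\scrF_{j_1},\ldots,\scrF_{j_n}\big)\subseteq\scrF_{j_1+\cdots+j_n}$. Thus $f^*$ is filtered even though $\Phi^{\alpha,\beta,\gamma}$ is not.

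The same computation identifies the associated graded of the linear part: $\gr\big((f^*)_1\big)$ sends the class of $g$ to the class of the map $c\mapsto g(c)\circ f_{(0)}^{\otimes n}$ on a generator $c$ of arity $(m,n)$, that is, to precomposition of the inputs by $f_{(0)}$; dually, $\gr\big((f_*)_1\big)$ is postcomposition of the outputs by $f_{(0)}$. It then remains to conclude. Since $f_{(0)}\colon A\to B$ is a quasi-isomorphism and we work over a field, $f_{(0)}^{\otimes n}\colon A^{\otimes n}\to B^{\otimes n}$ is a quasi-isomorphism by the K\"unneth formula, and pre- (respectively post-) composition with it induces a quasi-isomorphism $\Hom\big(B^{\otimes n},C^{\otimes m}\big)\to\Hom\big(A^{\otimes n},C^{\otimes m}\big)$ (respectively $\Hom\big(C^{\otimes n},A^{\otimes m}\big)\to\Hom\big(C^{\otimes n},B^{\otimes m}\big)$), because over a field the functors $\Hom(-,V)$ and $\Hom(V,-)$ preserve quasi-isomorphisms. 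Applying $\Hom_{\Sy}\big(\gr\calC,-\big)$ arity- and graph-wise preserves this property, so $\gr\big((f^*)_1\big)$ and $\gr\big((f_*)_1\big)$ are quasi-isomorphisms, whence $f^*$ and $f_*$ are filtered $\infty$-quasi-isomorphisms. I expect the genuinely delicate step, and the one specific to the properadic setting, to be the filtration-preservation argument of the second paragraph, where connectedness of graphs and the density filtration must conspire to rescue the continuity that $\Phi^{\alpha,\beta,\gamma}$ alone lacks.
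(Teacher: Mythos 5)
Your proposal follows the same overall strategy as the paper's proof: the same composition formula for $(f^*)_n$, the same vanishing of the constant term, the same density-filtration bookkeeping (imported from \cref{prop:CompHoLieAlgeStructure}) for continuity, and the same reduction of the quasi-isomorphism claim to the fact that (pre/post)composition by $f_{(0)}$ is a quasi-isomorphism of mapping complexes, which over a field follows since quasi-isomorphisms of complexes of vector spaces are homotopy equivalences. The one real divergence is the final step: you pass to the associated graded of the canonical filtration $\scrF$ and compute $\gr\big((f^*)_1\big)$ there, whereas the paper fixes each $\scrF_k$, introduces a second auxiliary filtration $\mathrm{F}_l$ (the coradical filtration extended by $\id\in\scrR_0\calC$), identifies the induced map on the $E^0$-page as $\big(f_{(0)}\big)^*$, and concludes with the Eilenberg--Moore comparison theorem. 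Both routes work; yours is a little more economical, while the paper's auxiliary filtration has the extra benefit of also killing the $(-)\circ d_\calC$ term, so that its $E^0$ differential is the bare $\partial_{\End}$.

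Two points need to be made explicit for your version to close. First, for your quasi-isomorphism argument to apply, the differential induced on $\gr^k\frakh$ must be the untwisted Hom-complex differential: the twisting terms $\DD_2(-,s\alpha)$ and $\UU_2(s\beta,-)$ of $\mathcal{l}_1$ must die on the associated graded. This is true because $s\alpha$ and $s\beta$ lie in $\scrF_1$ and the operations are filtration-multiplicative by \cref{prop:CompHoLieAlgeStructure}, hence these terms raise $\scrF$ strictly --- but it is not automatic and should be stated. Second, your continuity argument for $n\geqslant 2$ attributes the bound to the non-identity $f$-labelled vertices ``raising the degree''; the actual mechanism, exactly as in \cref{prop:CompHoLieAlgeStructure} and in the paper's proof, is that for $c\in\scrD_{j_1+\cdots+j_n-1}\calC$ at least one $g_i$ is evaluated on an element of $\scrD_{j_i-1}\calC$ and therefore vanishes; the non-trivial $f$-vertices only contribute an additional $+1$ on top of that baseline. (Also, in the paper's conventions the copies of $f$ label the \emph{top} vertices of the two-level graphs and the $g_i$ the bottom ones --- the reverse of your wording --- but this is cosmetic.) With these two clarifications your argument is complete and correct, and it establishes the $\gr$-formulation of ``filtered $\infty$-quasi-isomorphism'', which is interchangeable with the paper's statement on each $\scrF_k$ since the filtrations are complete and exhaustive.
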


\begin{proof}
Since the arguments are similar in both cases, we make them explicit only for the pullback morphism $f^*$. 
First, we show that this composite actually defines an $\infty$-morphism of $\Linfty$-algebras, that is 
\[\left(f^*\right)_0(1)=\sum_{k\geqslant 2} \Phi_k^{\alpha, \beta, \gamma}
\left(\left(\Xi^f\right)_0(1)\,, \ldots, \left(\Xi^f\right)_0(1)\right)=
\sum_{k\geqslant 2} \Phi_k^{\alpha, \beta, \gamma}
\left(0+f, \ldots, 0+f\right)=0~,\]
by definition.

\medskip

More generally, the $n$-th component of the $\infty$-morphism $f^*$ is equal to 
\begin{align*}
\left(f^*\right)_n(g_1, \ldots, g_n) &= \left(\Phi^{\alpha, \beta, \gamma} \circledcirc \Xi^f\right)_n(g_1, \ldots, g_n)\\
&=
\sum_{k\geqslant 0} \Phi_{n+k}^{\alpha, \beta, \gamma}
\Big(\left(\Xi^f\right)_1(g_1), \ldots, \left(\Xi^f\right)_1(g_n), \underbrace{\left(\Xi^f\right)_0(1),\ldots, \left(\Xi^f\right)_0(1)}_{k\ \text{times}}\Big) \\
&=
\sum_{k\geqslant 0} \Phi_{n+k}^{\alpha, \beta, \gamma}
\Big(g_1+0, \ldots, g_n+0, \underbrace{0+f,\ldots, 0+f}_{k\ \text{times}}\Big)~. 
\end{align*}
The element $\Phi_{n+k}^{\alpha, \beta, \gamma}\Big(g_1+0, \ldots, g_n+0, 0+f,\ldots, 0+f\Big)$ of $\h_{\alpha, \beta}$
amounts to considering the summand of the decomposition map $\Delta$ of $\calC$ with $k$ vertices on the top level, to which one applies $f$, and $n$ vertices on the bottom level, to which one applies $g_1, \ldots, g_n$ in all the possible ways. 

\medskip

This description allows us to establish that the $\infty$-morphism $f^*$ is continuous, that is 
\[
\left(f^*\right)_n(g_1, \ldots, g_n)\in \scrF_{i_1+\cdots+i_n}\h_{\beta, \gamma}\quad , \qquad \forall g_1 \in \scrF_{i_1} \h_{\beta, \gamma},\ldots, g_n \in \scrF_{i_n} \h_{\beta, \gamma}~.
\]
For any $c\in \calC$, we denote by $\sum \gra\big(c_1, \dots, c_k; c'_1, \ldots, c'_n\big)$ the summand of its image $\Delta(c)$ under the decomposition map of the coproperad $\calC$ made up of 
directed connected graphs $\gra$ on 2 levels
with the $k$ vertices on the top level labelled by the elements $c_1, \dots, c_k$ and with the $n$ vertices of the bottom level labelled by  the elements $c'_1, \ldots, c'_n$~. 
When $c\in \scrD_{i_1+\cdots+i_n-1}\calC$, we claim that there exists at least one $1\leqslant j\leqslant n$ such that $c'_j\in \scrD_{i_j-1}\calC$~. 
Otherwise, one gets a contradiction by the exact same argument as the one given at the end of the proof of \cref{prop:CompHoLieAlgeStructure}.
This shows that $g_j\big(c'_j\big)=0$ and then that $\left(f^*\right)_n(g_1, \ldots, g_n)(c)=0$.

\medskip

It remains to show that the restriction of $\left(f^*\right)_1$ to 
$\scrF_k \frakh_{\beta, \gamma} \xrightarrow{\sim} \scrF_k \frakh_{\alpha,\gamma}$ is a quasi-isomorphism, for any $k\geqslant 1$. 
The above-mentioned description 
\[
\left(f^*\right)_1(g)= 
\sum_{k\geqslant 1} \Phi_{1+k}^{\alpha, \beta, \gamma}
(g, f, \ldots, f)
\]
shows that the element $\left(f^*\right)_1(g)\in \h_{\alpha, \gamma}$ amounts to first applying the decomposition map $\Delta$ of the coproperad $\calC$, then projecting onto the summand made up of exactly one vertex on the bottom part, applying $g$ and the bottom level and $f$ everywhere on the top level, and finally composing the upshot in $\End^A_C$.
\[
\begin{tikzpicture}[scale=0.75]
	\draw[thick] (2,0.7) to[out=270,in=90] (3.75,-0.7) ;
	\draw[thick] (4,-1)-- (4,-2);
	\draw[thick] (3,-1)-- (3,-2);
	\draw[thick] (6,-1)-- (6,-2);
	\draw[thick] (1,1) to (1,-2);
	\draw[thick] (0.5,2) to  (0.5,1);
	\draw[thick] (0,1) to  (0,-1);
	\draw[thick] (2,-1) to  (2,-2);	
	\draw[thick] (7,-1) to  (7,-2);	
	\draw[thick] (8,-1) to  (8,-2);			
	\draw[thick] (6,1) to  (6,2);				
	\draw[thick] (6,0.7) to[out=270,in=90] (8,-0.7) to[out=270,in=90] (8,-2); 
	\draw[thick] (5,2) to  (5,1);
	\draw[thick] (5,-1) to (5,-2);
	\draw[draw=white,double=black,double distance=2*\pgflinewidth,thick] (5,0.7) to[out=270,in=90]  (6,-0.7);	
	\draw[draw=white,double=black,double distance=2*\pgflinewidth,thick] (9,2) to  (9,-1);
	\draw[thick] (1.5,2) to  (1.5,1);
	\draw[draw=white,double=black,double distance=2*\pgflinewidth,thick] (8,0.7) to[out=270,in=90] (5,-0.7); 
	\draw[thick] (5,-1) -- (5,-2);	
	\draw[draw=white,double=black,double distance=2*\pgflinewidth,thick] (4,2) to[out=270,in=90] (4,1);
	\draw[draw=white,double=black,double distance=2*\pgflinewidth,thick]  (4,0.7) to[out=270,in=90] (2.25,-0.7);
	\draw[thick] (8,2) to (8,1) ;
	\draw[fill=white] (-0.3,0.7) rectangle (2.3,1.3);
	\draw[fill=white] (3.7,0.7) rectangle (6.3,1.3); 
	\draw[fill=white] (7.7,0.7) rectangle (9.3,1.3); 
	\draw[fill=white] (-0.3,-1.3) rectangle (9.3,-0.7);
	\draw (1,1) node {\small{$f$}};
	\draw (5,1) node {\small{$f$}};
	\draw (8.5,1) node {\small{$f$}};
	\draw (4.5,-1) node {\small{$g$}};
\end{tikzpicture}\]
We extend the coradical filtration of the coaugmentation coideal $\oC$ to the whole coproperad $\calC$ by placing 
the counital element $\id \in I\subset \calC$ in $\scrR_0 \calC$. This produces again an increasing and exhaustive filtration 
\[
0=\scrR_{-1} \calC\subset \scrR_{0}\calC=I \subset 
\scrR_1 \calC\subset \scrR_2 \calC\subset \cdots \subset \scrR_l \calC\subset \scrR_{l+1} \calC\subset \cdots 
\quad \& \quad 
\bigcup_{l \geqslant 0} \scrR_l \calC =\calC
\ .
\]
We denote by $\mathrm{F}_l$ the induced decreasing and complete filtration on any mapping space with source $\calC$: 
$g\in \mathrm{F}_l$ if its restriction $g|_{\scrR_{l-1} \calC}= 0$ vanishes. For instance, we have 
\[
\scrF_k \frakh_{\beta, \gamma}=
\mathrm{F}_0 \scrF_k \frakh_{\beta, \gamma} \supset \mathrm{F}_1 \scrF_k \frakh_{\beta, \gamma}
 \supset \mathrm{F}_2 \scrF_k \frakh_{\beta, \gamma} 
 \supset \cdots \supset \mathrm{F}_l \scrF_k \frakh_{\beta, \gamma} \supset \mathrm{F}_{l+1}\scrF_k \frakh_{\beta, \gamma} \supset \cdots~,\]
where $\mathrm{F}_l \scrF_k \frakh_{\beta, \gamma} \coloneq \mathrm{F}_l \frakh_{\beta, \gamma} \cap \scrF_k \frakh_{\beta, \gamma}$~. 
Let us show that the differential of $\h_{\beta, \gamma}$ preserves this new filtration. The image of any any $g\in \mathrm{F}_l \h_{\beta, \gamma}$ under the (twisted) differential denoted by $\mathcal{l}_1$ in \cref{cor:HoLieAlgeStructureBIS} is made up of the following terms. The first term is equal to $\partial \circ g\in \mathrm{F}_l \h_{\beta, \gamma}$, where $\partial$ stands for the differential map of $\End^B_C$~. The second term  $g \circ d_\calC$ also lives in $\mathrm{F}_l \h_{\beta, \gamma}$ since 
\[d_\calC\left(\scrR_{l-1} \calC\right)\subset \scrR_{l-2}\calC\subset \scrR_{l-1} \calC~,\] by the conilpotency of the dg coproperad $\calC$ and by $d_\calC(\id)=0$. It remains to show that the other two terms 
$\DD_2(g,s\beta)$ and $\UU_2(s\gamma, g)$ also send $\scrR_{l-1} \calC$ to $0$. Both send the element $\id$ to $0$. 
We apply again the same argument as in the proof of \cref{prop:CompHoLieAlgeStructure}: the image of any element $c\in \scrR_{l-1}\calC$  under the decomposition map $\Cop{(1)}{}$ (respectively $\Cop{}{(1)}$) produces terms of the form $\gra(c_1, c_2)$ where $\gra$ is a directed connected graph with two vertices labelled on the bottom by $c_1$ and on the top by $c_2$, with $c_1\in \scrR_{l-1}\calC$ (respectively $c_2\in \scrR_{l-1}\calC$). 

\medskip

Applying the same kind of arguments once again, one can see that the map $\left(f^*\right)_1$ preserves the filtration $\mathrm{F}_l$, that is 
\[ \left(f^*\right)_1 \colon \mathrm{F}_l \scrF_k \frakh_{\beta, \gamma} \to  \mathrm{F}_l  \scrF_k \frakh_{\alpha,\gamma}~.\]
The differential on the first page $E^0$ of the associated spectral sequence is equal to the first above-mentioned term, that is 
$\partial \circ g$ and $\partial \circ \left(f^*\right)_1(g)$ respectively. The induced map $E^0\left(\left(f^*\right)_1\right)$ on this page is equal to the pullback by the first component $f_{(0)}$ of $f$: 
\[E^0\left(\left(f^*\right)_1\right)(g) \colon \calC \xrightarrow{g} \End^B_C \xrightarrow{\big(f_{(0)}\big)^*} \End^A_C~, \]
which is a quasi-isomorphism since $f_{(0)}$ is. 
Since the respective filtrations are complete and exhaustive, the Eilenberg--Moore comparison theorem 
\cite[Theorem~5.5.11]{WeibelBook} applies and shows that the 
the restriction of $\left(f^*\right)_1 \colon  
\scrF_k \frakh_{\beta, \gamma} \to \scrF_k \frakh_{\alpha, \gamma}$  is a quasi-isomorphism, for any $k\geqslant 1$. This concludes the proof that 
$f^*$ is a filtered $\infty$-quasi-isomorphism of $\Linfty$-algebras.
\end{proof}

The pullback map $f^*$ and the pushout map $f_*$ are two continuous $\infty$-morphisms of $\Linfty$-algebras defined in order to lift respectively the pullback and the pushout of $\infty$-morphisms of $\Omega \calC$-gebras by $f$: 
\[
\MC\big(f^*\big)(g)=g \circledcirc f \qquad \text{and} \qquad \MC\big(f_*\big)(g)=f \circledcirc g~. 
\]

\begin{proposition}[Homotopical invertibility of $\infty$-quasi-isomorphisms]\label{thm:HoInv}
For any $\infty$-quasi-isomorphism $f \colon (A, \alpha) \stackrel{\sim}{\rightsquigarrow}  (B, \beta)$ of $\Cobar\calC$-gebras, there exists an  $\infty$-morphism $g \colon (B, \beta) \rightsquigarrow (A, \alpha)$ such that the composite 
$g\cc f$ is homotopic to the $0$-simplex $\id_A$ of $\MC_\bullet(\h_{\alpha, \alpha})$ and such that 
the composite 
$f\cc g$ is homotopic to the $0$-simplex $\id_B$ of $\MC_\bullet(\h_{\beta, \beta})$. 
Any such $\infty$-morphism $g$ is unique up to homotopy in $\MC_\bullet(\h_{\beta, \alpha})$~.
\end{proposition}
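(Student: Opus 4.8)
The plan is to transport the statement out of the $\Linfty$-setting, where \cref{lem:pullbackpushoutinfty} has already done the essential work, into the simplicial world through the integration functor $\MC_\bullet$, and then to finish by a purely categorical argument in the homotopy category $\pi_0\!\left(\Delta\textsf{-}\catofgebras{\Cobar\calC}\right)$. First I would invoke the homotopy invariance of the Deligne--Hinich integration functor---a far-reaching generalisation of the Goldman--Millson theorem, see \cite{DolgushevRogers15, RNV19}---according to which any filtered $\infty$-quasi-isomorphism of complete $\Linfty$-algebras is sent by $\MC_\bullet$ to a weak equivalence of Kan complexes. Combined with \cref{lem:pullbackpushoutinfty}, this gives, for every $\Cobar\calC$-gebra structure $\gamma$, two weak equivalences of Kan complexes
\[
\MC_\bullet(f^*) \colon \Map(\beta, \gamma) \xrightarrow{\sim} \Map(\alpha, \gamma) \qquad \text{and} \qquad
\MC_\bullet(f_*)\colon \Map(\gamma, \alpha) \xrightarrow{\sim} \Map(\gamma, \beta)\ ,
\]
where, as recalled just after that lemma, $\MC(f^*)(g) = g\circledcirc f$ and $\MC(f_*)(g) = f \circledcirc g$. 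In particular both maps induce bijections on $\pi_0$.

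Next I would pass to the homotopy category $\pi_0\!\left(\Delta\textsf{-}\catofgebras{\Cobar\calC}\right)$, whose hom-sets are the $\pi_0\Map(\alpha,\beta)$ and whose composition is induced by the simplicial composition $U_{\alpha,\beta,\gamma}$; this is a bona fide category since $\pi_0\colon \sSet \to \Set$ preserves finite products, so $U_{\alpha,\beta,\gamma}$ descends to connected components. Specialising the first weak equivalence to $\gamma = \alpha$, the surjectivity of $\pi_0\MC_\bullet(f^*)$ produces a class $[g]\in \pi_0\Map(\beta,\alpha)$ with $[g\circledcirc f] = [\id_A]$, i.e. a left homotopy inverse of $f$; specialising the second one to $\gamma = \beta$, the surjectivity of $\pi_0\MC_\bullet(f_*)$ produces a class $[g']\in \pi_0\Map(\beta,\alpha)$ with $[f\circledcirc g'] = [\id_B]$, i.e. a right homotopy inverse. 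The standard one-line computation $[g] = [g]\circ[\id_B] = [g]\circ[f]\circ[g'] = [\id_A]\circ[g'] = [g']$ then forces $[g]=[g']$. Choosing a representative $g\colon (B,\beta)\rightsquigarrow(A,\alpha)$ of this common class, I obtain an $\infty$-morphism satisfying $g\circledcirc f \sim \id_A$ and $f\circledcirc g \sim f\circledcirc g' \sim \id_B$, where $\sim$ denotes homotopy of $0$-simplices, i.e. equality of classes in the relevant $\pi_0\Map$.

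Finally, uniqueness up to homotopy is immediate from the same input: any $\infty$-morphism $g_1$ with $g_1\circledcirc f \sim \id_A$ represents the preimage of $[\id_A]$ under the $\pi_0$-bijection $\pi_0\MC_\bullet(f^*)\colon \pi_0\Map(\beta,\alpha)\xrightarrow{\cong}\pi_0\Map(\alpha,\alpha)$, and this preimage is unique; hence any two admissible $g_1, g_2$ satisfy $[g_1]=[g_2]$, that is, they are homotopic in $\MC_\bullet(\h_{\beta,\alpha})$. I do not expect a genuine obstacle in this proposition: its entire substance has been absorbed into \cref{lem:pullbackpushoutinfty} and into the homotopy invariance of $\MC_\bullet$. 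The only point demanding a little attention is that the composition $U_{\alpha,\beta,\gamma}=\MC_\bullet(\Phi^{\alpha,\beta,\gamma})$, although built from the \emph{non-continuous} $\infty$-morphism $\Phi^{\alpha,\beta,\gamma}$, is nonetheless a genuine simplicial map of Kan complexes (as established in the construction of $\Delta\textsf{-}\catofgebras{\Cobar\calC}$), so that it does descend to a well-defined, associative and unital composition on $\pi_0$, which is all the categorical argument requires.
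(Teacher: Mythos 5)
Your proof is correct and follows essentially the same route as the paper: both rest on \cref{lem:pullbackpushoutinfty} together with the Dolgushev--Rogers homotopy invariance of $\MC_\bullet$ to obtain bijections on $\pi_0$ of the mapping spaces, and then conclude by a short categorical argument. The only immaterial difference is the final bookkeeping: you run the classical two-sided-inverse argument in $\pi_0\left(\Delta\textsf{-}\catofgebras{\Cobar\calC}\right)$, whereas the paper pushes the homotopy $g\circledcirc f \sim \id_A$ forward along $\MC_\bullet(f_*)$ and then invokes the injectivity of $\pi_0\MC_\bullet(f^*)$ on $\pi_0\Map(\beta,\beta)$.
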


\begin{proof}
We apply to $\MC_\bullet\left(f^*\right)$ the homotopy invariance of the integration function, that is the generalisation of Goldman--Milson theorem, due to Dolgushev--Rogers \cite{DolgushevRogers15} which states that the image of a filtered $\infty$-quasi-isomorphism (\cref{lem:pullbackpushoutinfty}) under the Maurer--Cartan  functor is a weak equivalence of Kan complexes. 
This way, we get a bijection between the respective connected components, that is 
\[\pi_0\left(\MC_\bullet\left(f^*\right)\right) \colon \pi_0\left(\MC_\bullet\left( \frakh_{\beta, \alpha}\right)\right) \xrightarrow{\cong} \pi_0\left(\MC_\bullet\left( \frakh_{\alpha,\alpha}\right)\right)~. \]
The representative of the identity morphism $\id_{A}$ on the right-hand side admits a unique pre-image on the left-hand side represented by a $g\in \MC\left( \frakh_{\beta, \alpha}\right)$. In other words, this provides us with an $\infty$-morphism of $\Cobar\calC$-gebras $g \colon (B, \beta) \rightsquigarrow (A, \alpha)$ such that the composite 
$g\cc f$ is homotopic to the $0$-simplex $\id_A$ by the above remark. 
Considering the image under the map $\pi_0\left(\MC_\bullet\left(f_*\right)\right)$, one can see that $f\cc g\cc f$ is homotopic to $f$ in $\MC_\bullet\left( \frakh_{\alpha, \beta}\right)$. Then the bijection 
\[\pi_0\left(\MC_\bullet\left(f^*\right)\right) \colon \pi_0\left(\MC_\bullet\left( \frakh_{\beta, \beta}\right)\right) \xrightarrow{\cong} \pi_0\left(\MC_\bullet\left( \frakh_{\alpha,\beta}\right)\right) \]
sends both classes represented by $f \cc g$ and $\id_B$ to this sole class, which implies that 
$f \cc g$ is homotopic to $\id_B$ in 
$\MC_\bullet\left( \frakh_{\beta, \alpha}\right)$ by injectivity. This concludes the proof. 
\end{proof}

The following statement allows one to better understand the homotopy relation between $\infty$-morphisms of $\Omega\calC$-gebras defined by the Kan complexes $\MC_\bullet(\h_{\alpha, \beta})$~. 

\begin{proposition}\label{prop:HoEquiv}
Let $f, g \colon (A, \alpha) {\rightsquigarrow}  (B, \beta)$ be two $\infty$-morphisms of $\Cobar\calC$-gebras which are homotopy equivalent in $\MC_\bullet\left(\h_{\alpha, \beta}\right)$. 
They fit into a commutative diagram 
\[
\begin{tikzcd}[column sep=3em, row sep=2em]
	(A,\alpha) \arrow[d, squiggly, "f"] & (A',\alpha') \arrow[d, squiggly, "k"] \arrow[l, "\sim"'] \arrow[r, "\sim"] & (A,\alpha) \arrow[d, squiggly, "g"]\\
	(B, \beta) & (B', \beta') \arrow[l, "\sim"'] \arrow[r, "\sim"]& (B, \beta)
\end{tikzcd}
\]
in the category $\infty\text{-}\catofgebras{\Cobar\souche}$, where the horizontal maps are strict quasi-isomorphisms which admit respectively common sections: 
\[
\begin{tikzcd}[column sep=3em, row sep=2em]
	(A,\alpha) \arrow[r, "\sim", "i_A"', bend left=50, hook] & (A',\alpha')  \arrow[l, "\sim"', "g_A", two heads]   \arrow[r, "\sim", "h_A"', two heads] & 
	\arrow[l, "\sim"', "i_A", bend right=50, hook'] (A,\alpha) 
\end{tikzcd}\quad \text{and} \quad 
\begin{tikzcd}[column sep=3em, row sep=2em]
	(B,\beta) \arrow[r, "\sim", "i_B"', bend left=50, hook] & (B',\beta')  \arrow[l, "\sim"', "g_B", two heads]   \arrow[r, "\sim", "h_B"', two heads] & 
	\arrow[l, "\sim"', "i_B", bend right=50, hook'] (B,\beta)
\end{tikzcd}~,
\]
that is $g_Ai_A=h_Ai_A=\id_A$ and $g_Bi_B=h_Bi_B=\id_B$~.
\end{proposition}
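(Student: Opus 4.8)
The plan is to lift the whole statement to the cofibrant $2$-colored dg properad $\left(\Cobar\calC\right)_{\bullet\rightsquigarrow\bullet}$ of \cref{prop:Res} and to read the desired span off a path object in the model category of \cref{prop:ModelCatColoreddgProperads}. Write $\End_{A,B}$ for the $2$-colored endomorphism properad of the pair $(A,B)$, whose single-colour parts are $\End_A$ and $\End_B$ and whose mixed part is $\End^A_B$. By \cite[Proposition~3.16]{HLV19} the two $\infty$-morphisms $f,g$ correspond to morphisms of $2$-colored dg properads $\Phi_f,\Phi_g\colon \left(\Cobar\calC\right)_{\bullet\rightsquigarrow\bullet}\to \End_{A,B}$ which agree on the single-colour generating summands $s^{-1}\oC_0$ and $s^{-1}\oC_1$, since these encode the fixed structures $\alpha$ and $\beta$, and differ only on the mixed summand $\calC^0_1$ encoding the $\infty$-morphism.

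First I would show that a homotopy between $f$ and $g$ in the Kan complex $\MC_\bullet(\frakh_{\alpha,\beta})$ amounts to a right homotopy between $\Phi_f$ and $\Phi_g$ relative to the single-colour parts. Concretely a $1$-simplex is a Maurer--Cartan element of $\frakh_{\alpha,\beta}\widehat\otimes \Omega_1$; using that the Sullivan algebra $\Omega_1$ and the cellular cochain algebra $\rmC^*\left(\Delta^1\right)$ of the interval are two weakly equivalent commutative models, one transfers this $1$-simplex into a map $K\colon \left(\Cobar\calC\right)_{\bullet\rightsquigarrow\bullet}\to \mathrm{Path}\left(\End_{A,B}\right)$ to the functorial path object of \cref{prop:ModelCatColoreddgProperads}, whose two vertex evaluations recover $\Phi_f$ and $\Phi_g$ and whose restriction to the single colours is the constant path on $\alpha$ and $\beta$.

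Next comes the key point. I would introduce the natural comparison morphism of $2$-colored dg properads
\[ p \colon \End_{\mathrm{Path}(A),\mathrm{Path}(B)} \too \mathrm{Path}\big(\End_{A,B}\big)\ ,\]
built from the coalgebra structure on $\rmC_*\left(\Delta^1\right)$ together with the canonical pairing $\rmC_*\left(\Delta^1\right)\otimes \mathrm{Path}(X)\to X$. In the operadic case this $p$ is the isomorphism used in \cite{DolgushevHoffnungRogers14}, but for properads — because of the several outputs — it is no longer an isomorphism. Granting that $p$ is nonetheless an acyclic fibration, cofibrancy of $\left(\Cobar\calC\right)_{\bullet\rightsquigarrow\bullet}$ provides a lift $\widetilde{K}$ of $K$ along $p$, and such a lift is exactly a $\left(\Cobar\calC\right)_{\bullet\rightsquigarrow\bullet}$-gebra structure on the pair $\left(\mathrm{Path}(A),\mathrm{Path}(B)\right)$, that is two $\Cobar\calC$-gebra structures $\alpha'$ on $\mathrm{Path}(A)$ and $\beta'$ on $\mathrm{Path}(B)$ together with an $\infty$-morphism $k\colon (\mathrm{Path}(A),\alpha')\rightsquigarrow (\mathrm{Path}(B),\beta')$. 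Evaluating $\widetilde{K}$ at the two vertices then yields the two columns $f$ and $g$ and makes both squares commute, the horizontal maps being the vertex evaluations $\mathrm{Path}(A)\to A$ and $\mathrm{Path}(B)\to B$, which are strict quasi-isomorphisms and epimorphisms; their common sections $i_A,i_B$ are the strict morphisms induced by the diagonal, so that $g_Ai_A=h_Ai_A=\id_A$ and $g_Bi_B=h_Bi_B=\id_B$ hold by naturality.

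The main obstacle is exactly the claim about $p$: proving that $\End_{\mathrm{Path}(A),\mathrm{Path}(B)}\to \mathrm{Path}\left(\End_{A,B}\right)$ is a surjective quasi-isomorphism. This is where the operadic argument of \emph{loc. cit.} breaks down, as there the two properads literally coincide, and it forces us to invoke the refined analysis of endomorphism properads prepared for \cref{thm:HoCatViaSimpl}. I expect surjectivity to follow from the explicit arity-wise description of both sides, while the quasi-isomorphism statement should be controlled by a filtration argument of the same flavour as the density filtration of \cref{def:DensOp}, comparing the two complexes arity by arity and reducing to the contractibility of the interval.
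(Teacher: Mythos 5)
Your overall framing is right — reduce to the cofibrant $2$-colored dg properad $\left(\Cobar\calC\right)_{\bullet\rightsquigarrow\bullet}$ of \cref{prop:Res}, identify the simplicial homotopy with a (right) homotopy of properad morphisms into $\End_{A,B}$ via the path object $\End_{A,B}\otimes\Omega_1$ of \cref{prop:ModelCatColoreddgProperads}, and then produce the span of path objects on $A$ and $B$. But the step you flag as the ``main obstacle'' is not a technical detail to be granted: it is precisely the point where the argument lives or dies, and the route you choose through it does not work. The paper explicitly records (in the remark following \cref{thm:HoCatViaSimpl}) that the operadic identification $\mathrm{Path}\left(\End^A_B\right)\cong\End^A_{\mathrm{Path}(B)}$ fails for properads, and the comparison morphism $p\colon \End_{\mathrm{Path}(A),\mathrm{Path}(B)}\to\mathrm{Path}\left(\End_{A,B}\right)$ you propose as a substitute is problematic already at the level of existence: to map $\Hom\big(\mathrm{Path}(A)^{\otimes n},\mathrm{Path}(B)^{\otimes m}\big)$ to $\Hom\left(\rmC_*\left(\Delta^1\right),\Hom\left(A^{\otimes n},B^{\otimes m}\right)\right)$ you must distribute the interval over the $m$ outputs using a diagonal on $\rmC_*\left(\Delta^1\right)$, which is not cocommutative, so compatibility with the symmetric group actions and with the properadic composition is not available; and you would additionally need to insert the section $A\to\mathrm{Path}(A)$ on inputs, which destroys compatibility with composition on the source colour. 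Even setting aside well-definedness, no argument is offered that $p$ is a fibration, and an acyclic-fibration claim of this shape is exactly what the authors say they could not salvage. Asserting ``granting that $p$ is an acyclic fibration'' therefore assumes the whole content of the proposition.

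The paper bypasses this entirely by a different mechanism: after converting the right homotopy into a \emph{left} homotopy (cylinder on the cofibrant source), it invokes the $2$-colored properadic analogue of \cite[Theorem~8.4]{Fresse10ter}, whose proof rests on endomorphism properads of \emph{diagrams} and on the fact that the relevant \emph{restriction} maps of such diagram endomorphism properads are acyclic fibrations (\cite[Lemmata~8.2 and 8.3]{Fresse10ter}). Those restriction maps are of a completely different nature from your $p$ and their acyclic-fibration property does carry over to the $2$-colored properadic setting, which is why the paper can lift the structure to $\left(\mathrm{Path}(A),\mathrm{Path}(B)\right)$ and extract the two $\Omega\calC$-gebra structures $\alpha'$, $\beta'$ and the connecting $\infty$-morphism $k$, together with the common sections $i_A$, $i_B$ coming from the explicit chain-level path objects. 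To repair your proof you would need to replace the lifting against $p$ by this diagram-endomorphism argument (or supply a genuinely new proof that a properad morphism $p$ of the required type exists and is an acyclic fibration, which is not expected to be true).
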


\begin{proof}
We first notice that 
\[\MC_\bullet\left(\h_{\alpha, \beta}\right)=\MC\left(\h_{\alpha, \beta}\,\widehat{\otimes}\,\Omega_\bullet\right)= 
\MC\left(\Hom_{\Sy}\big(\calC, \End^A_B\big)\,\widehat{\otimes}\,\Omega_\bullet\right)\cong 
\MC\left(\Hom_{\Sy}\big(\calC, \End^A_B\otimes \Omega_\bullet\big)\right)~. \]
Then we notice that 
the construction $\End_{A, B}\otimes \Omega_1$ provides us with a path object for $\End_{A, B}$ 
in the model category structure on 2-colored dg properads given in \cref{prop:ModelCatColoreddgProperads}.
These two facts show that  the two Maurer--Cartan elements corresponding to two $\infty$-morphisms $f, g \colon (A, \alpha) {\rightsquigarrow}  (B, \beta)$ of $\Cobar\calC$-gebras are homotopic in the Kan complex $\MC_\bullet\left(\h_{\alpha, \beta}\right)$ if and only if the two induced morphisms of 2-colored dg properads 
\[(\alpha, f, \beta)\,, (\alpha, g, \beta)\, \colon \left(\Cobar \calC\right)_{\bullet \rightsquigarrow \bullet}=\left(\G\left(s^{-1}\oC_0 \oplus \calC^0_1 \oplus s^{-1}\oC_1  \right), \d \right) \to 
\End_{A, B}\]
are right homotopic in the model category of 2-colored dg properads.

\medskip

The classical model categorical properties imply that these two morphisms of 2-colored dg properads 
$(\alpha, f, \beta)$ and $(\alpha, g, \beta)$ are also left homotopic. 
Then we claim that Theorem~8.4 of \cite{Fresse10ter} also holds for 2-colored dg properads: one applies 
\emph{mutatis mutandis} the same arguments 
based on the notion of endomorphism dg props of diagrams  \cite[Section~6.3]{Fresse10ter}
in the 2-colored dg properads setting. 
One can literally use the arguments of \emph{loc.cit.}, especially \cite[Lemmata~8.2 and 8.3]{Fresse10ter}, by noticing that the underlying total chain complexes of the 2-colored properads of endomorphisms of diagrams made up of maps like $f,g \colon (A,B) \to \left(A', B'\right)$ are isomorphic to the underlying chain complexes of the properads of endomorphisms of similar diagrams under the 
assignment $f\oplus g \colon A \oplus B \to A' \oplus B'$.

As in the proof of \cref{prop:InftyIsot}, we consider the classical and functorial path objects in the category of chain complexes 

\[\begin{array}{l}
\begin{tikzcd}
	A \ar[r,hook, "\sim", "i_A"']  & 
	\mathrm{Path}(A)\cong(A\oplus A \oplus s^{-1}A, d)  \ar[r,two heads, "{(g_A,h_A)}"'] 
	& A \oplus A 
\end{tikzcd} 
\quad \text{and}\\
\begin{tikzcd}
	B \ar[r,hook, "\sim", "i_B"']  & 
	\mathrm{Path}(B)\cong(B\oplus B \oplus s^{-1}B, d)  \ar[r,two heads, "{(g_B,h_B)}"'] 
	& A \oplus A ~,
\end{tikzcd} 	
\end{array}\]
where $i_A(a)\coloneqq (a,a,0)$ and $i_B(b)\coloneqq (b,b,0)$ are quasi-isomorphisms. 
Therefore, $g_A$ and $h_A$ are quasi-isomorphisms sharing a common section $i_A$, 
that is $g_Ai_A=h_Ai_A=\id_A$, and similarly for $B$. 
This construction is monoidal, $\mathrm{Path}(A\oplus B)\cong \mathrm{Path}(A) \oplus \mathrm{Path}(B)$~,
and thus fits into the aforementioned remark. 

\medskip

The 2-colored dg properad $\left(\Cobar \calC\right)_{\bullet \rightsquigarrow \bullet}$ is cofibrant by \cref{prop:Res} and the pair $(A,B)$ of chain complexes is fibrant and cofibrant since we work over a field. The 2-colored properadic version of \cite[Theorem~8.4]{Fresse10ter} applies to the two left-homotopic morphisms 
$(\alpha, f, \beta)$ and $(\alpha, g, \beta)$ and it provides us first with
  two $\Omega\calC$-gebra structures $\alpha'$ on $\mathrm{Path}(A)$
and $\beta'$  on  $\mathrm{Path}(B)$ respectively, for which the underlying maps of the path objects are strict quasi-isomorphisms
of $\Omega\calC$-gebras:
\[
\begin{tikzcd}[column sep=3em, row sep=2em]
	(A,\alpha) \arrow[r, "\sim", "i_A"', bend left=50, hook] & (\mathrm{Path}(A),\alpha')  \arrow[l, "\sim"', "g_A", two heads]   \arrow[r, "\sim", "h_A"', two heads] & 
	\arrow[l, "\sim"', "i_A", bend right=50, hook'] (A,\alpha) 
\end{tikzcd}\ \text{and} \ 
\begin{tikzcd}[column sep=3em, row sep=2em]
	(B,\beta) \arrow[r, "\sim", "i_B"', bend left=50, hook] & (\mathrm{Path}(B),\beta')  \arrow[l, "\sim"', "g_B", two heads]   \arrow[r, "\sim", "h_B"', two heads] & 
	\arrow[l, "\sim"', "i_B", bend right=50, hook'] (B,\beta)
\end{tikzcd}~.
\]
(This is the upshot of the existence of the map $k \colon P=\left(\Cobar \calC\right)_{\bullet \rightsquigarrow \bullet} \to 
\mathrm{End}_{\mathbb{Y}}$
in \emph{loc. cit.}~.)
The rest of the proof provides us with 
an $\infty$-morphism 
$k \colon (\mathrm{Path}(A), \alpha') \rightsquigarrow (\mathrm{Path}(B), \beta')$ between them satisfying the following commutative diagram 
\[
\begin{tikzcd}[column sep=3em, row sep=2em]
	(A,\alpha) \arrow[d, squiggly, "f"] & (\mathrm{Path}(A),\alpha') \arrow[d, squiggly, "k"] \arrow[l, "\sim"', "g_A"] \arrow[r, "\sim", "h_A"'] & (A,\alpha) \arrow[d, squiggly, "g"]\\
	(B, \beta) & (\mathrm{Path}(B), \beta') \arrow[l, "\sim"', "g_B"] \arrow[r, "\sim", "h_B"']& (B, \beta)
\end{tikzcd}\]
in the category $\infty\text{-}\catofgebras{\Cobar\calC}$~.
\end{proof}

\begin{theorem}\label{thm:HoCatViaSimpl}
The canonical functor 
\[\rmH\, \colon \infty\text{-}\catofgebras{\Cobar\souche} \to \pi_0\left(\Delta\textsf{-}\catofgebras{\Cobar\calC}\right)\]
is the universal functor which sends quasi-isomorphisms (respectively $\infty$-quasi-isomorphisms) to isomorphisms: for any functor $\rmF\, \colon \allowbreak \infty\textsf{-}\catofgebras{\Cobar\souche} \to \mathsf{C}$ sending quasi-isomorphisms (respectively $\infty$-quasi-iso\-mor\-phi\-sms) to isomorphisms, there exists a unique functor 
$\rmG\, \colon \pi_0\left(\Delta\textsf{-}\catofgebras{\Cobar\calC}\right) \to \mathsf{C}$ such that  $\rmF=\rmG\rmH$. 
\end{theorem}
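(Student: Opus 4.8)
The plan is to exploit that $\rmH$ is the identity on objects and full. Indeed, the morphisms of $\pi_0\left(\Delta\textsf{-}\catofgebras{\Cobar\calC}\right)$ are the connected components $\pi_0\left(\MC_\bullet(\frakh_{\alpha,\beta})\right)$, and by \cref{prop:SimplicialCat0Simpl} each such component is represented by a $0$-simplex, i.e. by an $\infty$-morphism. For a functor that is bijective on objects and surjective on hom-sets, possessing the claimed universal property is equivalent to the two checks: (i) $\rmH$ inverts the prescribed maps, and (ii) any admissible $\rmF$ identifies two $\infty$-morphisms whenever $\rmH$ does.

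First I would verify (i). For an $\infty$-quasi-isomorphism $f$, the homotopical invertibility \cref{thm:HoInv} yields $g$ with $g\cc f$ homotopic to $\id$ and $f\cc g$ homotopic to $\id$; since homotopic $\infty$-morphisms define the same morphism of $\pi_0$, the class of $g$ is a two-sided inverse of $\rmH(f)$. As strict quasi-isomorphisms are particular $\infty$-quasi-isomorphisms, $\rmH$ inverts them as well.

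Next I would establish the factorisation. Given $\rmF$ inverting (strict) quasi-isomorphisms, I set $\rmG$ equal to $\rmF$ on objects and $\rmG([f])\coloneqq \rmF(f)$ on morphisms; uniqueness is automatic from fullness and bijectivity on objects, and functoriality follows because composition in $\pi_0$ is induced by the composite $\cc$ of $\infty$-morphisms while $\rmF$ is already functorial. The crux is well-definedness, and here \cref{prop:HoEquiv} does the work: two homotopic $\infty$-morphisms $f,g$ sit in a diagram whose horizontal arrows are strict quasi-isomorphisms $g_A,h_A,g_B,h_B$ admitting \emph{common} sections $i_A,i_B$, so that $g_Ai_A=h_Ai_A=\id_A$ and $g_Bi_B=h_Bi_B=\id_B$. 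Applying $\rmF$, the relations $\rmF(g_A)\rmF(i_A)=\rmF(h_A)\rmF(i_A)=\id$ together with the invertibility of $\rmF(g_A),\rmF(h_A)$ force $\rmF(g_A)^{-1}=\rmF(i_A)=\rmF(h_A)^{-1}$, whence $\rmF(g_A)=\rmF(h_A)$, and likewise $\rmF(g_B)=\rmF(h_B)$. Commutativity of the two squares then gives
\[\rmF(f)=\rmF(g_B)\,\rmF(k)\,\rmF(i_A)=\rmF(h_B)\,\rmF(k)\,\rmF(i_A)=\rmF(g)~,\]
so $\rmG$ is well defined.

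Finally, since the diagram of \cref{prop:HoEquiv} involves only strict quasi-isomorphisms, the weaker hypothesis that $\rmF$ inverts strict quasi-isomorphisms already suffices; as $\infty$-quasi-isomorphisms contain the strict ones, a functor inverting $\infty$-quasi-isomorphisms inverts the strict ones too, and the identical construction applies. Thus the single functor $\rmH$ satisfies both universal properties, which by uniqueness of localisations yields
\[\infty\text{-}\catofgebras{\Cobar\calC}\left[\mathsf{qi}^{-1}\right]\cong\pi_0\left(\Delta\textsf{-}\catofgebras{\Cobar\calC}\right)\cong\infty\text{-}\catofgebras{\Cobar\calC}\left[\infty\text{-}\mathsf{qi}^{-1}\right]~.\]
The hard part is carried entirely by \cref{prop:HoEquiv}: its production of the common sections $i_A,i_B$ is precisely what lets the two legs of the diagram collapse to the same value under $\rmF$; without such sections one could not deduce $\rmF(g_A)=\rmF(h_A)$, and the homotopy relation would not descend to an identification in an arbitrary target $\mathsf{C}$.
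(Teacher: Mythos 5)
Your proposal is correct and follows essentially the same route as the paper: \cref{thm:HoInv} shows $\rmH$ inverts ($\infty$-)quasi-isomorphisms, and \cref{prop:HoEquiv} with the common sections $i_A, i_B$ forces $\rmF(g_A)=\rmF(h_A)$ and $\rmF(g_B)=\rmF(h_B)$, so that the two legs of the comparison diagram give $\rmF(f)=\rmF(g)$. Your additional remarks on fullness and bijectivity on objects (via \cref{prop:SimplicialCat0Simpl}) merely make explicit the existence-and-uniqueness bookkeeping that the paper leaves implicit.
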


\begin{proof}
\cref{thm:HoInv} proves that the canonical functor 
\[ \infty\text{-}\catofgebras{\Cobar\souche} \to \pi_0\left(\Delta\textsf{-}\catofgebras{\Cobar\calC}\right)\]
sends $\infty$-quasi-isomorphisms to isomorphisms.
Let us now prove the universal property. Let  $\rmF\, \colon \allowbreak \infty\textsf{-}\catofgebras{\Cobar\souche} \to \mathsf{C}$ be a functor sending quasi-isomorphisms (respectively $\infty$-quasi-isomorphisms) to isomorphisms. 
The existence of a functor 
$\rmG\, \colon \pi_0\left(\Delta\textsf{-}\catofgebras{\Cobar\calC}\right) \to \mathsf{C}$ satisfying  $\rmF=\rmG\rmH$ is equivalent to the fact that any pair of $\infty$-morphisms $f, g \colon (A, \alpha) {\rightsquigarrow}  (B, \beta)$ of $\Cobar\calC$-gebras homotopy equivalent in 
$\MC_\bullet\left(\h_{\alpha, \beta}\right)$ share the same image $\rmF(f)=\rmF(g)$ under the functor $\rmF$. 
In order to settle this property, we apply \cref{prop:HoEquiv} to get commutative diagrams 
\[
\begin{tikzcd}[column sep=3em, row sep=2em]
	(A,\alpha) \arrow[d, squiggly, "f"] & (\mathrm{Path}(A),\alpha') \arrow[d, squiggly, "k"] \arrow[l, "\sim"', "g_A"] \arrow[r, "\sim", "h_A"'] & (A,\alpha) \arrow[d, squiggly, "g"]\\
	(B, \beta) & (\mathrm{Path}(B), \beta') \arrow[l, "\sim"', "g_B"] \arrow[r, "\sim", "h_B"']& (B, \beta) 
\end{tikzcd}\]
\[
\begin{tikzcd}[column sep=3em, row sep=2em]
	(A,\alpha) \arrow[r, "\sim", "i_A"', bend left=50, hook] & (A',\alpha')  \arrow[l, "\sim"', "g_A", two heads]   \arrow[r, "\sim", "h_A"', two heads] & 
	\arrow[l, "\sim"', "i_A", bend right=50, hook'] (A,\alpha) 
\end{tikzcd}\quad \text{and} \quad 
\begin{tikzcd}[column sep=3em, row sep=2em]
	(B,\beta) \arrow[r, "\sim", "i_B"', bend left=50, hook] & (B',\beta')  \arrow[l, "\sim"', "g_B", two heads]   \arrow[r, "\sim", "h_B"', two heads] & 
	\arrow[l, "\sim"', "i_B", bend right=50, hook'] (B,\beta)
\end{tikzcd}
\]
in the category $\infty\text{-}\catofgebras{\Cobar\souche}$.
Applying the functor $\rmF$ to the last two ones, we get 
\[\rmF(g_A)\rmF(i_A)=\rmF(h_A)\rmF(i_A)=\id_{\rmF(A)}\quad  \text{and} \quad  \rmF(g_B) \rmF(i_B)=\rmF(h_B)\rmF(i_B)=\id_{\rmF(B)}~.\]
Since $\rmF$ sends ($\infty$-)quasi-isomorphisms to isomorphisms, we obtain $\rmF(g_A)=\rmF(h_A)$ and $\rmF(g_B)=\rmF(h_B)$~.
Finally, the image of the first above displayed commutative diagram gives 
\[
\rmF(f)=\rmF(g_B)\rmF(k)\rmF(g_A)^{-1}=\rmF(h_B)\rmF(k)\rmF(h_A)^{-1}=\rmF(g)~,
\]
which concludes the proofs. 
\end{proof}

\begin{remark}
While the proof of \cref{lem:pullbackpushoutinfty} is a properadic generalisation of the operadic proof of 
\cite[Proposition~4.3]{DolgushevHoffnungRogers14} and while the proof of 
\cref{thm:HoInv} is borrowed from \cite[Corollary~4.4]{DolgushevHoffnungRogers14}, 
 this is \emph{not} the case for the proof of \cref{prop:HoEquiv} and thus of \cref{thm:HoCatViaSimpl}: the property that  
a path object of a mapping space $\End^A_B$ is a mapping space to a path object 
\[\mathrm{Path}\left(\End^A_B\right)\cong \End^A_{\mathrm{Path}(B)}~,\] which is true on the operadic level since one has only one output, does not hold on the properadic level. This explains why we had to introduce other methods based on homotopical properties (model structures) of 2-colored dg properads to bypass this failure. In the end, the present arguments settle a stronger version than the one of \emph{loc. cit.} that deals with the localisation with respect to quasi-isomorphisms. 
\end{remark}

\subsection{Simplicial homotopy of $\infty$-quasi-isomorphisms}
After \cref{thm:MainInftyQi}, we give  another homotopy characterisation of $\infty$-quasi-isomorphisms  in the framework 
of the simplicial category of $\Cobar \calC$-gebras this time. 

\begin{theorem}[Homotopy characterisation of $\infty$-quasi-isomorphisms]\label{thm:CaraInftyQI}
Let  $f \colon (A, \alpha) \rightsquigarrow (B, \beta)$ be an $\infty$-morphism of $\Omega\calC$-gebras. The following assertions are equivalent.
\begin{enumerate}
\item The $\infty$-morphism $f$ is an $\infty$-quasi-isomorphism. 

\item The pullback maps 
\[\MC_\bullet\left(f^*\right) \colon \MC_\bullet\left( \frakh_{\beta, \alpha}\right) \xrightarrow{\sim} \MC_\bullet\left( \frakh_{\alpha,\alpha}\right) \quad \text{and} \quad 
\MC_\bullet\left(f^*\right) \colon \MC_\bullet\left( \frakh_{\beta, \beta}\right) \xrightarrow{\sim} \MC_\bullet\left( \frakh_{\alpha, \beta}\right) \]
 are  weak equivalences of Kan complexes. 

\item The pushout maps 
\[
\MC_\bullet\left(f_*\right) \colon \MC_\bullet\left( \frakh_{\beta, \alpha}\right) \xrightarrow{\sim} \MC_\bullet\left( \frakh_{\beta, \beta}\right) 
\quad \text{and} \quad 
\MC_\bullet\left(f_*\right) \colon \MC_\bullet\left( \frakh_{\alpha, \alpha}\right) \xrightarrow{\sim} \MC_\bullet\left( \frakh_{\alpha,\beta}\right) 
\]
 are  weak equivalences of Kan complexes. 

\item All the pullback and pushout maps 
\[\MC_\bullet\left(f^*\right) \colon \MC_\bullet\left( \frakh_{\beta, \gamma}\right) \xrightarrow{\sim} \MC_\bullet\left( \frakh_{\alpha, \gamma}\right) \quad \text{and} \quad 
\MC_\bullet\left(f_*\right) \colon \MC_\bullet\left( \frakh_{\gamma, \alpha}\right) \xrightarrow{\sim} \MC_\bullet\left( \frakh_{\gamma,\beta} \right) \]
 are  weak equivalences of Kan complexes, for any $\Omega\calC$-gebra structure $(C, \gamma)$.  
\end{enumerate}
\end{theorem}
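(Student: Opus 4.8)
The plan is to establish the equivalences by proving the implications $(1)\Rightarrow(4)$, the (trivial) specialisations $(4)\Rightarrow(2)$ and $(4)\Rightarrow(3)$, and finally $(2)\Rightarrow(1)$ together with its mirror $(3)\Rightarrow(1)$. The implication $(1)\Rightarrow(4)$ is the conceptual core and rests entirely on the machinery already in place. Assuming $f$ is an $\infty$-quasi-isomorphism, \cref{lem:pullbackpushoutinfty} shows that for \emph{every} $\Omega\calC$-gebra structure $(C,\gamma)$ the morphisms $f^*\colon\frakh_{\beta,\gamma}\rightsquigarrow\frakh_{\alpha,\gamma}$ and $f_*\colon\frakh_{\gamma,\alpha}\rightsquigarrow\frakh_{\gamma,\beta}$ are \emph{filtered} $\infty$-quasi-isomorphisms of $\Linfty$-algebras. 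Applying the Deligne--Hinich integration functor and invoking the homotopy invariance property already used in the proof of \cref{thm:HoInv} (the generalisation of the Goldman--Millson theorem due to Dolgushev--Rogers \cite{DolgushevRogers15}), I conclude that $\MC_\bullet(f^*)$ and $\MC_\bullet(f_*)$ are weak equivalences of Kan complexes, which is precisely $(4)$. The implications $(4)\Rightarrow(2)$ and $(4)\Rightarrow(3)$ are then immediate, since $(2)$ (respectively $(3)$) is the specialisation of the pullback (respectively pushout) part of $(4)$ to $\gamma=\alpha$ and $\gamma=\beta$.

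The substantial direction is $(2)\Rightarrow(1)$. First I would isolate the following preliminary fact, extracted from \cref{prop:HoEquiv}: two $\infty$-morphisms $f,g\colon(A,\alpha)\rightsquigarrow(B,\beta)$ that are homotopy equivalent in $\MC_\bullet(\frakh_{\alpha,\beta})$ induce the same map $\H(f_{(0)})=\H(g_{(0)})$ on homology. Indeed, \cref{prop:HoEquiv} places $f$ and $g$ in a commutative diagram whose horizontal strict quasi-isomorphisms $g_A,h_A$ and $g_B,h_B$ admit common sections, so that $\H(g_A)=\H(h_A)$ and $\H(g_B)=\H(h_B)$; reading off first components in the two commuting squares and using that $\H(g_A)$ is invertible then forces $\H(f_{(0)})=\H(g_{(0)})$. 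Combined with the fact that the first component of a composite is the composite of first components, $(g\circledcirc f)_{(0)}=g_{(0)}\circ f_{(0)}$, this reduces $(2)\Rightarrow(1)$ to producing a homotopy inverse of $f$.

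To build such an inverse, I would use that $\MC_\bullet(f^*)\colon\MC_\bullet(\frakh_{\beta,\alpha})\to\MC_\bullet(\frakh_{\alpha,\alpha})$ is a weak equivalence, hence a bijection on $\pi_0$. Since $\MC(f^*)(g)=g\circledcirc f$, the class of $\id_A$ has a preimage represented by an $\infty$-morphism $g\colon(B,\beta)\rightsquigarrow(A,\alpha)$ with $g\circledcirc f$ homotopic to $\id_A$. Then $(f\circledcirc g)\circledcirc f=f\circledcirc(g\circledcirc f)$ is homotopic to $f=\id_B\circledcirc f$, so $f\circledcirc g$ and $\id_B$ have homotopic images under the second pullback map $\MC(f^*)\colon\MC_\bullet(\frakh_{\beta,\beta})\to\MC_\bullet(\frakh_{\alpha,\beta})$; injectivity of this weak equivalence on $\pi_0$ forces $f\circledcirc g$ to be homotopic to $\id_B$. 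Applying the preliminary fact to $g\circledcirc f\sim\id_A$ and $f\circledcirc g\sim\id_B$ yields $\H(g_{(0)})\circ\H(f_{(0)})=\id_{\H(A)}$ and $\H(f_{(0)})\circ\H(g_{(0)})=\id_{\H(B)}$, so $\H(f_{(0)})$ is invertible and $f$ is an $\infty$-quasi-isomorphism. The implication $(3)\Rightarrow(1)$ runs identically, using instead $\MC(f_*)(g)=f\circledcirc g$ and the two weak equivalences of $(3)$, with the roles of source and target exchanged.

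The main obstacle is precisely the preliminary homology-invariance fact underpinning $(2)\Rightarrow(1)$: on the properadic level a path object of a mapping space $\End^A_B$ is \emph{not} a mapping space into a path object, so this step cannot be handled by the naive operadic comparison. This is why I route it through \cref{prop:HoEquiv}, whose proof already supplies the required zig-zag of strict quasi-isomorphisms with common sections via the model-categorical properties of $2$-colored dg properads; once that input is granted, everything else is formal manipulation of $\pi_0$-bijections and of the composition $\circledcirc$ of $\infty$-morphisms.
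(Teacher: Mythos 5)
Your proposal is correct and follows essentially the same route as the paper: $(1)\Rightarrow(4)$ via \cref{lem:pullbackpushoutinfty} and the Dolgushev--Rogers homotopy invariance, the trivial specialisations $(4)\Rightarrow(2),(3)$, and then $(2)\Rightarrow(1)$ by extracting $g$ with $g\circledcirc f\sim\id_A$ from the $\pi_0$-bijection, transporting the homotopy along the (always simplicial) pushout map, using injectivity of the second pullback on $\pi_0$ to get $f\circledcirc g\sim\id_B$, and invoking \cref{prop:HoEquiv} to pass to homology. The only cosmetic difference is that you package the use of \cref{prop:HoEquiv} as a standalone fact (homotopic $\infty$-morphisms induce equal maps on homology of first components) and conclude that $\H(f_{(0)})$ has a two-sided inverse, whereas the paper argues monomorphism and epimorphism on homology separately; the two are interchangeable.
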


\begin{proof}
We have already seen that $(1)\Rightarrow (2)$, $(1)\Rightarrow (3)$, and $(1)\Rightarrow (4)$ are direct corollaries of \cref{lem:pullbackpushoutinfty} under the homotopy invariance of the integration functor \cite{DolgushevRogers15}. 
The implications $(4)\Rightarrow (2)$ and $(4)\Rightarrow (3)$ are obvious. It remains to prove $(2)\Rightarrow (1)$ and $(3)\Rightarrow (1)$. We now show the first one; the second one can be established by similar arguments. 

\medskip

Since the pullback map $\MC_\bullet\left(f^*\right) \colon \MC_\bullet\left(\frakh_{\beta, \alpha}\right) \xrightarrow{\sim} \MC_\bullet\left( \frakh_{\alpha,\alpha}\right)$ is a weak equivalence of Kan complexes, it induces a bijection 
$\pi_0\left(\MC_\bullet\left( \frakh_{\beta, \alpha}\right)\right) \cong  \pi_0\left(\MC_\bullet\left( \frakh_{\alpha,\alpha}\right)\right)$~. 
Therefore there exists an $\infty$-morphism $g\, \colon \beta \rightsquigarrow \alpha$ such that the composite $g \cc f$ is homotopy equivalent to the identity $\id_A$ in $\MC_\bullet\left(\frakh_{\alpha, \alpha}\right)$. 
\cref{prop:HoEquiv} shows that 
 the composite $g \cc f$ fits into a commutative diagram of the following form
\[
\begin{tikzcd}[column sep=3em, row sep=2em]
	(A,\alpha) \arrow[d, squiggly, "g\cc f"] & (A',\alpha') \arrow[d, squiggly, "k"] \arrow[l, "\sim"'] \arrow[r, "\sim"] & (A,\alpha) 
	\arrow[d, squiggly, "\id_A","\sim"']\\
	(A, \alpha) & (A', \alpha'') \arrow[l, "\sim"'] \arrow[r, "\sim"]& (A, \alpha)~.
\end{tikzcd}
\]
This shows that $g\cc f$ is an $\infty$-quasi-isomorphism and that $f$ induces a monomorphism on homology.

\medskip

Since the pushout map $\MC_\bullet\left(f_*\right) \colon \MC_\bullet\left(\frakh_{\alpha, \alpha}\right) \to \MC_\bullet\left( \frakh_{\alpha,\beta}\right)$ is a morphism of Kan complexes, the two homotopic $\infty$-morphisms
 $g\cc f$ and $\id_A$ are sent to the two $\infty$-morphisms 
$f \cc g \cc f$ and $f$, which are homotopic in $\MC_\bullet\left(\frakh_{\alpha,\beta}\right)$. 
Finally, since the pullback map 
$\MC_\bullet\left(f^*\right) \colon \MC_\bullet\left(\frakh_{\beta, \beta}\right) \xrightarrow{\sim} \MC_\bullet\left(\frakh_{\alpha, \beta}\right)$
is a weak equivalence of Kan complexes, it induces a bijection 
$\pi_0\left(\MC_\bullet\left( \frakh_{\beta, \beta}\right)\right) \cong  \pi_0\left(\MC_\bullet\left(\frakh_{\alpha,\beta}\right)\right)$~. 
This implies that the two homotopic $\infty$-morphisms 
$f \cc g \cc f$ and $f$ on the right-hand side come from the two $\infty$-morphisms 
$f \cc g$ and $\id_B$ on the left-hand side, which are thus homotopic in $\MC_\bullet\left(\frakh_{\beta,\beta}\right)$~.
Using \cref{prop:HoEquiv} again, one shows that 
$f \cc g$ is an $\infty$-quasi-isomorphism and that $f$ induces an epimorphism on homology, which concludes the proof. 
\end{proof}

Finally, one can give a third version of the homotopy transfer theorem for 
$\Cobar \calC$-gebras in the setting of the simplicial category structure settled above. The data of a quasi-isomorphism $f \colon A \xrightarrow{\sim} B$ of chain complexes is equivalent to a Maurer--Cartan element $(0,f,0)$ in the $\Linfty$-algebra 
$\mathfrak{k}_{\calC, A,B}$, where $f \colon \calC \to \End^A_B$ is concentrated in $\I$ and vanishes on $\oC$, 
by \cref{prop:MCInfty}. By the usual twisting procedure \cite[Section~4.4]{DSV18}, one gets an $\Linfty$-algebra 
$\mathfrak{k}_{\calC, A,B}^{\, (0,f,0)}$ whose Maurer--Cartan elements are triples $(\alpha, g, \beta)$ such that 
$\alpha$ is an $\Omega \calC$-gebra on $A$, 
$\beta$ is an $\Omega \calC$-gebra on $B$, and 
$f+g$ is an $\infty$-morphism between them. We consider its $\Linfty$-subalgebra ${\bar{\mathfrak{k}}}^{\, f}_{\calC, A, B}$ defined by 
\[
\left( s\Hom_{\Sy}\left(\oC, \End_A\right)\oplus \Hom_{\Sy}\left(\oC, \End^A_B\right) \oplus s\Hom_{\Sy}\left(\oC, \End_B\right), -\partial^{(0,f,0)}, \left\{\mathcal{k}_n^{(0,f,0)}\right\}_{n\geqslant 2}, \left\{\scrF_k\right\}_{k\geqslant 1}\right)~.
\]

\begin{theorem}[Homotopy transfer theorem]\label{Thm:HTTnew}
Given a quasi-isomorphism $f \colon A \xrightarrow{\sim} B$ of chain complexes and an $\Omega \calC$-gebra structure $\beta$ on $B$, there is a Maurer--Cartan element $(\alpha, F, \beta') \in {\bar{\mathfrak{k}}}^{\, f}_{\calC, A, B}$ unique up to homotopy in $\MC_\bullet\left({\bar{\mathfrak{k}}}^{\, f}_{\calC, A, B}\right)$ such that 
$\beta'$ is homotopic to $\beta$ in $\MC_\bullet\left(\g_{\calC, B}\right)$~. 
\end{theorem}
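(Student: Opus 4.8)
The plan is to realise the desired Maurer--Cartan element as the essentially unique lift of $\beta$ along the canonical projection onto the target factor, and to prove that this projection becomes a weak equivalence after integration. Concretely, let
\[
p \colon \bar{\mathfrak{k}}^{\,f}_{\calC, A, B} \longrightarrow \mathfrak{g}_{\calC, B} = s\Hom_{\Sy}\big(\oC, \End_B\big)
\]
be the projection onto the third summand. Since every structure operation $\DD_n$ and $\UU_n$ takes values in the middle factor $\Hom_{\Sy}(\oC, \End^A_B)$, while the only operation producing an output in the $B$-factor is the bracket $[\,\,,\,]_B$, which only accepts $B$-inputs, and since twisting at $(0,f,0)$ inserts the middle element $f$ which can never feed into $[\,\,,\,]_B$, the map $p$ is a \emph{strict} morphism of $\Linfty$-algebras preserving the canonical filtration of \cref{prop:CompHoLieAlgeStructure}. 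Under the dictionary of \cref{prop:MCInfty} combined with the twisting procedure, a Maurer--Cartan element $(\alpha, F, \beta')$ of $\bar{\mathfrak{k}}^{\,f}_{\calC, A, B}$ projects to $\beta'$, so the statement amounts to showing that $\pi_0\big(\MC_\bullet(p)\big)$ admits a unique preimage of the class $[\beta]$. It therefore suffices to prove that $p$ is a \emph{filtered quasi-isomorphism}, for then the homotopy invariance of the integration functor \cite{DolgushevRogers15} (already invoked in \cref{thm:HoInv}) shows that $\MC_\bullet(p)$ is a weak equivalence of Kan complexes, whence $\pi_0\big(\MC_\bullet(p)\big)$ is a bijection, yielding both existence and uniqueness.

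The heart of the argument is the analysis of the kernel
\[
K \coloneqq s\Hom_{\Sy}\big(\oC, \End_A\big) \oplus \Hom_{\Sy}\big(\oC, \End^A_B\big),
\]
equipped with the differential $\partial^{(0,f,0)}$ obtained by twisting at $(0,f,0)$. Inspecting this twisted differential, I would check that it sends the $A$-summand to the $A$-summand via $-\partial$ together with the connecting map $D \coloneqq \sum_{k \geqslant 1} \tfrac{1}{k!}\,\DD_{k+1}(\underbrace{f, \ldots, f}_{k}, -)$ into the middle summand, whereas it acts on the middle summand by the internal differential $\partial$ alone, all mixed brackets with the middle curvature $f$ vanishing for degree and colour reasons. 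Thus $K$ is precisely the mapping cone of $D$, and its acyclicity is equivalent to $D$ being a quasi-isomorphism.

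To establish the latter I would run the same kind of spectral sequence argument as in the proof of \cref{lem:pullbackpushoutinfty}, filtering $K$ by the coradical degree on $\oC$. On the first page only the leading term $\DD_2(f, -)$ of $D$ survives, the higher terms strictly raising the filtration. Since $f$ is concentrated in $\I$, this leading term is arity-wise postcomposition by $f_{(0)}^{\otimes m}$ on the $m$ output legs,
\[
\DD_2(f, s a)(c) = f_{(0)}^{\otimes m} \circ a(c) \colon A^{\otimes n} \to B^{\otimes m}, \qquad c \in \oC(m,n),
\]
which is a quasi-isomorphism because $f_{(0)}$ is one and we work over a field. Hence the cone is acyclic on the first page; as the coradical filtration is exhaustive and complete, the Eilenberg--Moore comparison theorem \cite[Theorem~5.5.11]{WeibelBook} upgrades this to acyclicity of $K$ compatibly with the canonical filtration. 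Consequently $p$ is a filtered quasi-isomorphism and the conclusion follows as above.

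The main obstacle is the kernel computation of the second paragraph: one must verify carefully that twisting by $(0,f,0)$ produces no additional coupling beyond the single connecting map $D$ --- in particular that the middle summand remains a genuine subcomplex under $\partial^{(0,f,0)}$ --- and then correctly identify the leading term of $D$ with honest postcomposition by $f_{(0)}^{\otimes m}$, which is exactly what converts the quasi-isomorphism hypothesis on $f$ into the acyclicity of the cone. Setting up the auxiliary coradical filtration so that the associated spectral sequence both isolates this leading term and converges is the technical crux, entirely parallel to the filtration bookkeeping carried out in \cref{lem:pullbackpushoutinfty} and \cref{prop:CompHoLieAlgeStructure}.
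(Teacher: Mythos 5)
Your proposal follows the same route as the paper's proof: both reduce the statement to showing that the canonical projection onto the factor $s\Hom_{\Sy}\big(\oC,\End_B\big)$ is a continuous filtered quasi-isomorphism, then invoke the Dolgushev--Rogers homotopy invariance of $\MC_\bullet$ to obtain a bijection on $\pi_0$, which yields existence and uniqueness simultaneously. The only structural difference is that where the paper identifies $\bar{\mathfrak{k}}^{\,f}_{\calC,A,B}$ with the cylinder of the pushout map $\alpha\mapsto f_*\circ\alpha$ and the pullback map $\beta\mapsto f^*\circ\beta$ and concludes by citing \cite[Lemma~1]{DW14}, you unpack the same observation by exhibiting the kernel of the projection as the mapping cone of the single connecting map $D$; this is a perfectly acceptable substitute. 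One local misstatement should be repaired, though: since $f$ is supported on $\I$, the terms $\DD_{k+1}(f,\ldots,f,-)$ only see the component of $\Delta(c)$ consisting of $c$ itself decorated with identities, so they do \emph{not} raise the coradical filtration, and it is false that only $\DD_2(f,-)$ survives on the first page of your spectral sequence --- indeed $\DD_2(f,-)$ alone vanishes on every arity $(m,n)$ with $m\geqslant 2$ and would not be a quasi-isomorphism there. The correct (and simpler) statement is that the full sum $D=\sum_{k\geqslant 1}\tfrac{1}{k!}\DD_{k+1}(f^{\odot k},-)$ is already, on the nose and arity-wise, postcomposition by $f_{(0)}^{\otimes m}$ on the $m$ outputs, the term $k=m$ being the one that contributes in arity $(m,n)$; with this correction no spectral sequence is needed to identify $D$, and its quasi-isomorphism property follows as you say from $f_{(0)}$ being a quasi-isomorphism over a field, compatibly with the filtrations.
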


\begin{proof}
The proof is the same as \cite[Theorem~5.1]{DolgushevHoffnungRogers14}. We recall it quickly for self-completeness of the present text. 
We claim first that the canonical projection 
\begin{equation}\label{Eq:AcyclicFib}\tag{$\diamond$}
\begin{tikzcd}
\Pi_B \ \colon \ {\bar{\mathfrak{k}}}^{\, f}_{\calC, A, B}  \ar[r, two heads, "\sim"]	& s\g_{\calC, B}
\end{tikzcd}
\end{equation}
is a filtered $\infty$-quasi-isomorphism of $\Linfty$-algebras. 
Then, using the homotopy invariance of the integration functor \cite{DolgushevRogers15}, we get a weak equivalence (acyclic fibration actually) of Kan complexes 
\[
\begin{tikzcd}
\MC_\bullet(\Pi_B) \ \colon \ \MC_\bullet\left({\bar{\mathfrak{k}}}^{\, f}_{\calC, A, B}\right)  \ar[r, two heads, "\sim"]	& \MC_\bullet\left(s\g_{\calC, B}\right)
\end{tikzcd}
\]
and thus a bijection 
\[\pi_0\left(\MC_\bullet\left({\bar{\mathfrak{k}}}^{\, f}_{\calC, A, B}\right)\right) \cong \pi_0\left(\MC_\bullet\left(s\g_{\calC, B}\right)\right)~.\]
Finally, any element $[\beta]$ on the right-hand side admits a unique pre-image $[(\alpha, F, \beta')]$ on the left-hand side, which concludes the proof. 

\medskip

The proof of \eqref{Eq:AcyclicFib} is similar than \emph{loc. cit.}. First, it is a continuous $\infty$-morphism of $\Linfty$-algebras by construction. Then, it is a filtered quasi-isomorphism by the same arguments as in the proof of 
\cite[Proposition~3.2]{DW14}: the chain complex ${\bar{\mathfrak{k}}}^{\, f}_{\calC, A, B}$ is isomorphism to the cylinder 
of the pushout map 
\[
\Hom_{\Sy}\left(\oC, \End_A\right) \to  \Hom_{\Sy}\left(\oC, \End^A_B\right) \ , \quad 
\alpha \mapsto f_*\circ \alpha
\]
and 
pullback map
\[
\Hom_{\Sy}\left(\oC, \End_B\right) \to  \Hom_{\Sy}\left(\oC, \End^A_B\right) \ , \quad 
\alpha \mapsto f^*\circ \beta~.
\]
Finally, one concludes with \cite[Lemma~1]{DW14}.
\end{proof}

Let us unravel this statement a little bit: given a quasi-isomorphism $f \colon A \xrightarrow{\sim} B$ of chain complexes and 
an $\Omega \calC$-gebra structure $\beta$ on $B$, there exists 
an $\Omega \calC$-gebra structure $\beta'$ on $B$ related to $\beta$ by two (strict) quasi-isomorphisms (by the arguments given in the proof of \cref{prop:InftyIsot}), 
a transferred $\Omega \calC$-gebra structure $\alpha$ on $A$, 
and an $\infty$-quasi-isomorphism $f+F$ from $\alpha$ to $\beta'$ extending $f$, that is
\[
\begin{tikzcd}
	(A, \alpha) \arrow[r, squiggly, "f+F"', "\sim"] & (B, \beta')
	& (\mathrm{Path}(B), \gamma) \ar[l,"\sim"', "g_B"] \ar[r,"\sim", "h_b"'] 
	& (B, \beta)~. 
\end{tikzcd}
\]
So, this version of the homotopy transfer theorem is very close to one given in \cite[Proposition~7.4]{Fresse10ter} whose upshot yields the existence of $\Omega \calC$-gebra structures and  quasi-isomorphisms 
\[
\begin{tikzcd}[column sep=small]
	(A, \alpha) \ar[r,"\sim", "i"'] 
	& (Z, \rho) 
	& (\mathrm{Path(Z)}, \varphi) \ar[r,"\sim", "h_Z"'] \ar[l,"\sim"', "g_Z"] 
	&(Z, \sigma) \ar[r,"\sim", "p"'] 
	& (B, \beta)\ ,
	\end{tikzcd} 
\]
where 
$\begin{tikzcd}
	A \ar[r,"\sim", "i"', hook] 
	&Z \ar[r,"\sim", "p"', two heads] 
	& B
	\end{tikzcd} $
is an acyclic fibration-cofibration factorisation of $f$~. 

\medskip

In the end, one can ``homotopically'' invert the quasi-isomorphisms going from right to left 
into $\infty$-quasi-isomorphisms using \cite[Theorem~4.18]{HLV19} and compose all the maps in order to get just one $\infty$-quasi-isomorphism from $(A, \alpha)$ to $(B, \beta)$:
\[
\begin{tikzcd}
	(A, \alpha) \arrow[r, squiggly, "\sim"] & (B, \beta)~.
\end{tikzcd}
\]
Notice that if one starts from a quasi-isomorphism $B \xrightarrow{\sim} A$ in the other way round, there always exists a quasi-isomorphism $A \xrightarrow{\sim} B$, since we are working over a field, and we can apply the abovementioned arguments.
These two version of the homotopy transfer theorem are rather different to the one given in \cite[Theorem~4.14]{HLV19} by explicit formulas for the transferred structure $\alpha$ and the extension of $f$ into an $\infty$-quasi-isomorphism.

\bibliographystyle{alpha}
\bibliography{bib}

\end{document}